\newtheorem{theorem}{Theorem}[section]
\newtheorem*{thm}{Theorem}
\newtheorem{corollary}[theorem]{Corollary}
\newtheorem{lemma}[theorem]{Lemma}
\newtheorem{proposition}[theorem]{Proposition}
\newtheorem{example}[theorem]{Example}
\newtheorem{question}[theorem]{Question}
\theoremstyle{definition}
\newtheorem{definition}[theorem]{Definition}
\theoremstyle{remark}
\newtheorem*{rem}{Remark}
\numberwithin{equation}{section}
\newcommand{\hau}{\mathcal{H}}
\newcommand{\rr}{\mathbb{R}}
\newcommand{\nn}{\mathbb{N}}
\newcommand{\diam}{\textrm{diam}\: }
\newcommand{\e}{\mathbf{e}}
\newcommand{\ff}{\mathbf{f}}
\newcommand{\h}{\mathbf{h}}
\newcommand{\g}{\mathbf{g}}
\begin{document}

\title[Assouad dimension of self-similar sets with overlaps in $\rr^d$]{Assouad dimension and local structure of self-similar sets with overlaps in $\mathbf{\rr^d}$}
\author{Ignacio Garc\'{\i}a}
\address{CEMIM and IFIMAR\\
Universidad Nacional de Mar del Plata and CONICET, Mar del Plata, Argentina}
\email{nacholma@gmail.com}
\subjclass[2010]{Primary: 28A80, 37C45; Secondary 28A78}
\keywords{Assouad dimension, self-similar sets, overlaps}
\thanks{The author was partially supported by Universidad Nacional de Mar del Plata, grant EXA 902/18}

\maketitle
\begin{abstract}
For a self-similar set in $\rr^d$ that is the attractor of an iterated function system that does not verify the weak separation property, Fraser, Henderson, Olson and Robinson showed that its Assouad dimension is at least $1$. In this paper, it is shown that the Assouad dimension of such a set is the sum of the dimension of the vector space spanned by the set of \emph{overlapping directions} and the Assouad dimension of the orthogonal projection of the self-similar set onto the orthogonal complement of that vector space. This result is applied to give sufficient conditions on the orthogonal parts of the similarities so that the self-similar set has Assouad dimension bigger than $2$, and also to answer a question posed by Farkas and Fraser. The result is also extended to the context of graph directed self-similar sets. The proof of the result relies on finding an appropriate weak tangent to the set. This tangent is used to describe partially the topological structure of self-similar sets which are both attractors of an iterated function system not satisfying the weak separation property and of an iterated functions system satisfying the open set condition.
\end{abstract}

\section{Introduction}

Self-similar sets in $\rr^d$ are one the simplest constructions of sets involving fractal features. Let $\mathcal S=\{S_1,\ldots, S_m\}$ be a finite family of contractive similitudes on $\rr^d$, that is, $S_i(x)=c_iO_ix+b_i$, $1\le i\le m$, where $c_i\in(0,1)$, $O_i$ is a $d\times d$ orthogonal matrix and $b_i\in\rr^d$. The corresponding self-similar set is the unique non empty compact subset $F\subset\rr^d$ which verifies the identity
\begin{equation}\label{selfsimilar}
F=\bigcup_{i=1}^mS_i(F).
\end{equation}
The set of maps $\mathcal S$ is called iterated function system (IFS for short) and the set $F$ is also referred as the attractor associated to the system.

For a self-similar set, the classical notions of dimensions, such as Hausdorff and box-counting dimensions, coincide. Moreover, this common value can be determined or even approximated in case there is some condition that guarantees that the images of $F$ in (\ref{selfsimilar}) do not overlap {\em significantly}. For example, one of the most well-known separation properties is the open set condition, where, besides some other nice properties that hold under its assumption, the Hausdorff dimension of $F$ is the unique value that satisfies the Hutchinson-Moran formula. See \cite{Fal} and references therein for the basic definitions of the mentioned dimensions and results.

However, in absence of separation conditions, determine the Hausdorff dimension of a self-similar set is a challenging problem which in its full generality remains open. Important advances in this direction were obtained by Hochman in \cite{Ho, Ho2}. 

In this article, we consider the Assouad dimension of self-similar sets in $\rr^d$, $d>1$, which is motivated by the results of Fraser, Henderson, Olson and Robinson in \cite{FHOR}. The Assouad dimension is always an upper bound for the above mentioned dimensions, and was introduced by Assouad \cite{A1, A2} in connection with the problem of Lipschitz embeddings of metric spaces into Euclidean spaces. 
Given a non-empty bounded set $F\subset \mathbb{R}^d$, let $N_{r}(F)$ be the minimum number of closed balls of radius $r$ needed to cover $F$. 
By $B(x,R)$ we mean the closed ball of radius $R$ centered at $x$.
The \emph{Assouad dimension} of $F$ is defined as 
\begin{align*}
\dim_{A}F=\inf \ \Bigl\{\alpha :\ & \text{there is a constant }c\ 
\text{such that } \\
& \sup_{x\in F}N_{r}(B(x,R)\cap F)\leq c\left( \frac{R}{r}\right) ^{\alpha
}\ \text{for all }0<r<R \Bigr\}.
\end{align*}
Thus, $\dim_A F$ is an upper homogeneity exponent, in the sense that any ball in $F$ of radius $R$ can be covered roughly by $\epsilon^{-\dim_A F}$ balls of radius $\epsilon R$, for any $\epsilon>0$.

The behavior of the Assouad dimension of self-similar sets is strongly related to the weak separation property, which is another separation condition for iterated function systems that now we recall. 

Given an IFS $\mathcal S$ as above, define $\mathcal I=\{1,\ldots, m\}$ and let $\mathcal I^\ast=\bigcup_{k\ge1}\mathcal I^k$ be the set of finite words in the alphabet $\mathcal I$. Given $\alpha=(i_1,\ldots, i_k)\in\mathcal I^\ast$, we define $S_{\alpha}=S_{i_1}\circ\cdots\circ S_{i_k}$, and also for later reference let $O_\alpha=O_{i_1}\cdots O_{i_k}$ and $c_\alpha=c_{i_1}\cdots c_{i_k}$. Consider the subset $$\mathcal E=\{S_\alpha^{-1}\circ S_{\beta}: \alpha, \beta\in \mathcal I^{\ast}, \alpha\neq\beta\}$$ of the group of the similitudes on $\rr^d$, equipped with the topology of pointwise convergence. Let $F$ be the corresponding attractor and assume that it is not contained in any $(d-1)$-dimensional hyperplane. Then, the IFS $\{S_i\}_{i\in\mathcal I}$ satisfies the {\em weak separation property} ({WSP}) if the identity map $I$ is not an accumulation point of $\mathcal E$, that is, 
$$I\notin \overline{\mathcal E\setminus \{I\}}.$$

The WSP was first formulated by Lau and Ngai \cite{LN99} in a different (but equivalent) way, and was further studied by Zerner in \cite{Ze}, where many equivalent definitions were given. Recall that the stronger condition $I\notin \overline{\mathcal E}$ is equivalent to the well known open set condition, that will be defined later in the paper (see \cite{S94}).

The following result was proved by Fraser, Henderson, Olson and Robinson.

\begin{thm}\cite[Theorem 1.3]{FHOR}
Let $F\subset\rr^d$ be a self-similar set not contained in any $(d-1)$-dimensional hyperplane. If $F$ is the attractor of an IFS that satisfies the weak separation property, then $\dim_A F=\dim_H F$; otherwise, $\dim_A F\ge 1$.
\end{thm}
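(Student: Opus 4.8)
The plan is to treat the two cases of the dichotomy by quite different means. When the IFS satisfies the WSP the equality $\dim_A F=\dim_H F$ will follow from the rigidity built into the WSP, which forces all the usual dimensions to agree; the more delicate statement is the bound $\dim_A F\ge 1$ when the WSP fails, and here the strategy is to exhibit a \emph{weak tangent} of $F$ that contains a curve of positive length. This suffices: the Assouad dimension does not increase on passing to a weak tangent (Mackay and Tyson), and a curve of positive finite length has Hausdorff dimension $\ge1$, hence Assouad dimension $\ge1$ as well, since $\dim_H\le\dim_A$.

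For the WSP case I would invoke Zerner's reformulation of the property \cite{Ze} in the form: there is an integer $N$ such that every ball $B(x,R)$ meets at most $N$ distinct cylinders $S_\alpha(F)$ with $c_\alpha\le R<c_{\alpha^-}$. Since $F$ is the union of all such ``$R$-stopped'' cylinders, $B(x,R)\cap F$ is covered by at most $N$ sets of the form $S_\alpha(F)$, each a similar copy of $F$ of diameter $\asymp R$; therefore $N_r(B(x,R)\cap F)\le N\max_\alpha N_r(S_\alpha(F))=N\max_\alpha N_{r/c_\alpha}(F)\le N\,N_{r/R}(F)$, and as $N_{r/R}(F)\le C_\varepsilon (R/r)^{\dim_B F+\varepsilon}$ for every $\varepsilon>0$ this gives $\dim_A F\le\dim_B F$. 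Since the WSP makes $\dim_H F=\dim_B F$ for self-similar sets (cf.\ \cite{Ze,Fal}), and $\dim_H F\le\dim_B F\le\dim_A F$ holds unconditionally, all of these coincide.

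Now suppose the WSP fails, so that $I\in\overline{\mathcal E\setminus\{I\}}$; pick $g_n=S_{\alpha_n}^{-1}S_{\beta_n}\in\mathcal E$ with $g_n\neq I$ and $g_n\to I$ pointwise. Because $F$ is not contained in a $(d-1)$-dimensional hyperplane its affine hull is all of $\rr^d$, so an affine map that is the identity on $F$ is the identity; hence each $g_n$ genuinely moves $F$ and $\rho_n:=\sup_{x\in F}|g_n(x)-x|$ satisfies $0<\rho_n\to0$. The map $g_n$ is $O(\rho_n)$-close to a rigid motion, and its iterates $g_n^{\,k}$ drift approximately along a one-parameter family of rigid motions while $0\le k\le K_n$, provided $K_n\rho_n$ stays bounded. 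Exploiting the word structure of the IFS one places, inside a single cylinder of $F$ of diameter $R_n\asymp c_{\alpha_n}\to0$, roughly $K_n$ similar copies of $F$ whose basepoints form a $(\rho_nR_n)$-net of a curve of length $\asymp K_n\rho_nR_n$; choosing $K_n$ with $K_n\rho_n\to L\in(0,\infty)$, rescaling that cylinder to the unit ball by a similarity $T_n$, and extracting a Hausdorff-convergent subsequence of $T_n(F)\cap B(0,1)$ (Blaschke selection), one obtains a weak tangent $E$ of $F$ containing a curve of length $\asymp L$. Then $\dim_A E\ge1$, whence $\dim_A F\ge\dim_A E\ge1$.

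The main obstacle is the construction in the last paragraph: realising the drifted positions of $g_n^{\,k}$ by honest sub-cylinders of $F$ lying inside one common cylinder, while keeping the contraction-ratio and orthogonal defects of $g_n$ negligible at the scale $c_{\alpha_n}$ on which this picture lives. These requirements pull against each other — the ratio defect of $g_n$ compounds geometrically along the iteration, the orthogonal parts $O_{\alpha_n}$ do not commute with the perturbation, and $c_{\alpha_n}\to0$ drives the working scale down — so one must select $\alpha_n$ and $\beta_n$ with care (for instance of comparable length and contraction ratio, so that $c_{\beta_n}/c_{\alpha_n}\to1$ and $O_{\alpha_n}^{-1}O_{\beta_n}\to I$ are under control) and couple this to a matched choice of the number of iterations $K_n$ and the rescaling $T_n$, so that after rescaling every defect is $o(1)$ while the limiting curve retains a definite length. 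Put differently, the crux is to locate inside $F$ an arbitrarily fine but definitely long ``near-interval'' of cylinders; granting that, the weak-tangent and Mackay--Tyson steps are routine.
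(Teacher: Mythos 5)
This theorem is quoted by the paper from \cite{FHOR} rather than proved, so the relevant comparison is with the argument there (whose machinery Sections 2--3 of the present paper reproduce and extend). Your overall architecture is the same as that proof: under the WSP, Zerner's bounded-multiplicity characterization plus the covering argument you give does yield $\dim_A F\le\dim_B F$ (and $\dim_H F=\dim_B F$ holds for every self-similar set, WSP or not, so that parenthetical is not where the WSP is needed); and in the non-WSP case the known route is exactly ``build a weak tangent containing a unit line segment and apply Mackay--Tyson''. So the first half and the framing are fine.

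The genuine gap is that the heart of the second half --- producing, inside a single similar copy $T_n(F)$ of $F$, a chain of $\approx K_n$ points of $F$-copies that is $o(1)$-dense in a segment of definite length --- is precisely the content of \cite[Theorem~3.2]{FHOR} (Theorem \ref{teotangent} here), and your sketched mechanism does not deliver it. Two concrete failures: (i) iterating a single near-identity element $g_n=S_{\alpha_n}^{-1}S_{\beta_n}$ does not keep the orbit inside one rescaled copy of $F$: $g_n(F)\subset S_{\alpha_n}^{-1}(F)$, but $g_n^2(F)\subset S_{\alpha_n}^{-1}S_{\beta_n}S_{\alpha_n}^{-1}(F)$ is not contained in any common $S_\gamma^{-1}(F)$, so the limiting set is not a weak tangent of $F$. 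The actual proof avoids powers altogether: at each step it selects fresh words $\alpha_{k_j},\beta_{k_j}$ at a new (much smaller) scale, inserts deep words $f_i^{m_j}$ to relocate near the reference point, and corrects the accumulated rotation by an element of a finite $\epsilon_2$-dense subset of $G=\overline{\{O_\gamma\}}$, which is what makes the telescoping containment (\ref{inductive}) --- hence $W_n\subset T_n(F)$ --- true. (ii) Your curve needs each step to have length bounded below and direction in a fixed narrow cone, uniformly along the iteration; but the displacement $\|g_n(y)-y\|$ of a near-identity similarity can degenerate to $0$ (and its direction can rotate arbitrarily) as $y$ approaches the fixed point of $g_n$, which may lie on or near $F$. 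This is exactly what the argmin/derivative condition $\rho\|D\Phi_k\|_{\mathrm{op}}\le\|\Phi_k(a_k)\|$ of Lemma \ref{lemmaFraser} (\cite[Lemma~3.11]{FHOR}) is for: it forces $\max\le 3\min$ of $\|\Phi_k\|$ on a fixed ball, giving the uniform step size (\ref{increase}) and the cone condition (\ref{cone}). Your proposal names these difficulties but resolves neither; ``selecting $\alpha_n,\beta_n$ with care'' is the missing proof, not a routine detail.
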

		
For self-similar sets in $\rr$, this result provides the precise dichotomy that its Assouad dimension is either its Hausdorff dimension, or maximal, depending on whether the weak separation property holds or not. Extensions in $\rr$ of this dichotomy were obtained in \cite{FO} for graph directed self-similar sets, and in \cite{AKT} for self-conformal sets.
 However, in $\rr^d$ the  above theorem is less precise and such a dichotomy is no longer true,  since there is a self-similar set not contained in any $(d-1)$-hyperplane that is the attractor of an IFS that does not satisfy the WSP but $\dim_H F<\dim_A F<2$; see \cite[Section 4]{FHOR} or Example \ref{Example} (a) in Section \ref{sectionexamples}. This shows that different phenomena may occur for the Assouad dimension of self-similar sets in $\rr^d$.

The aim of the present paper is to clarify the above situation. In particular, the main result  is an expression for the Assouad dimension of self-similar sets in $\rr^d$ not satisfying the weak separation property, which roughly speaking, is the sum of the dimension of the vector space spanned by the {\em overlapping directions} and the Assouad dimension of the projection of the set onto the orthogonal complement of that vector space.

\subsection{Overlapping directions, statement of the main result and some consequences.}

For a subset $E\subset\rr^d$, the notation $E\Subset\rr^d$ means that $E$ is not contained in any $(d-1)$-dimensional hyperplane. Also, recall that for a function $f:\rr^d\to\rr$, the argument of the minimum on a subset $A\subset\rr^d$ is the subset of $A$ defined by 
\begin{equation*}
\text{ argmin}\{f(x):x\in A\}:=\{y\in A: f(y)\le f(x) \text{ for all } x\in A\}.
\end{equation*} 

We denote by $\|\cdot\|$ the Euclidean norm in $\rr^d$ and by $\|\cdot\|_{\mathrm{op}}$ the corresponding operator norm on the space of $d\times d$ matrices. Also, let $\|\cdot\|_\infty=\|\cdot\|_{L^{\infty}([0,1]^d)}$. Given that the maps in $\mathcal E$ are similarities, then the IFS $\mathcal S$ does not satisfy the weak separation property if and only if there is a pair of sequences $\{\alpha_k\}$ and $\{\beta_k\}$ in $\mathcal I^\ast$ such that 
\begin{equation*}
0<\|S_{\alpha_k}^{-1}\circ S_{\beta_k}-I\|_{{\infty}}\to0 \ \ \textrm{as \ } k\to\infty.
\end{equation*}

\begin{definition}[Overlapping directions]
Let $\mathcal S=\{S_1, \ldots, S_m\}$ be an IFS with attractor $F\Subset\rr^d$. Assuming that $\mathcal S$ does not satisfy the weak separation property, let $\alpha_k, \beta_k\in\mathcal I^\ast$ be such that
$$0<\|\Phi_k\|_{{\infty}}\to0 \ \ \textrm{as } \ \ k\to\infty,$$ 
where $\Phi_k=S_{\alpha_k}^{-1}S_{\beta_k}-I$. Further assume, for some $a\in F$ and $\rho>0$, the following technical condition: 
\begin{equation}\label{boundderivative1}
\rho\|D\Phi_{k}\|_{\mathrm{op}}\le\|\Phi_{k}(a_{k})\| \ \ \textrm{for all } k,
\end{equation}
where ${a_k}$ is any point in ${\rm argmin}\{\|\Phi_k(x)\|: x\in B(a,\rho)\}$. An {\em overlapping direction} $\omega\in S^{d-1}$ for $\mathcal S$ is any limit point of the sequence $\{\Phi_k(a)/\|\Phi_k(a)\|\}_k$. 
We denote by $V_{\mathcal S}$ the vector subspace of $\rr^d$ spanned by the set overlapping directions for $\mathcal S$. 

Besides, if $\mathcal{S}$ verifies the weak separation condition, then there is no overlapping dimension and we define $V_{\mathcal S}=\{0\}$.
\end{definition}

\begin{rem}
The failure of the WSP for the IFS $\mathcal S$ assures that $\dim V_{\mathcal S}\ge1$; see Lemma \ref{lemmaFraser} below, which is a slight modification of \cite[Lemma 3.11]{FHOR}. Here, the hypothesis $F\Subset\rr^d$ is essential. For example, the ternary Cantor set can be embedded in $\rr^3$ as the attractor of an IFS with three maps: the two obvious similarities and an irrational rotation fixing the $x$-axis. This IFS does not satisfy the WSP, however it is easily seen that $V_{\mathcal S}=\{0\}$.
\end{rem}

In general, the definition of overlapping direction is not an intrinsic property of the attractor since it depends on the IFS. For example, the unit square $[0,1]^d$  may be seen as a self-similar set with an IFS satisfying the WSP, in which case there is no overlapping direction. But also, using ideas as in \cite[Section 2 (5)]{BG}, it is possible to construct for any $1\le k\le d$, an IFS $\mathcal S$ for $[0,1]^d$ such that $\dim V_{\mathcal S}=k$. For some other examples, which also contain line segments, see Section \ref{section:weaknoweak}.

\begin{question}
Suppose $F\Subset\rr^d$ is a totally disconnected self-similar set. 
Is the definition of overlapping direction intrinsic in this case? 
\end{question}

However, for a given self-similar set $F$, it is always possible to consider a generating IFS $\mathcal S$ with $\dim V_{\mathcal S}$ maximal. 
Moreover, if $\mathcal S_1$ and $\mathcal S_2$ are two such IFS with maximal dimension, then $V_{\mathcal S_1}=V_{\mathcal S_2}$ since the union of two IFS generating $F$ is again an IFS generating $F$. We denote by $V_{\mathcal O}$ this common subspace, which is intrinsic of the set.

\begin{definition}
The {\em overlapping dimension of $F$}, $\dim_{O}F$, is the dimension of the subspace $V_{\mathcal O}$, that is, $\dim_{O}F=\dim{V_{\mathcal O}}$. 	
\end{definition}

The following is the main result of the paper. Denote by $\pi_V:\rr^d\to V$ the orthogonal projection onto the subspace $V$. 

\begin{theorem}\label{paththeorem}
Let $F\Subset\rr^d$ be the attractor of the IFS $\mathcal S$. 
 For any subspace $V$ of $V_{\mathcal S}$ we have
\begin{equation}\label{formula}
\dim_A F=\dim V+\dim_{A} \pi_{V^{\perp}}(F). 
\end{equation}
In particular, 
\begin{equation}\label{formula1}
\dim_A F=\dim_O F+\dim_A \pi_{V_{\mathcal O}^{\perp}}(F). 
\end{equation}

Moreover, the strict inequality $\dim V<\dim_A F$ holds whenever $\dim V<d$.
\end{theorem}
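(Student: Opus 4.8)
The plan is to establish \eqref{formula} in two inequalities, then derive \eqref{formula1} as the special case $V=V_{\mathcal O}$, and finally extract the strict inequality.

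\smallskip

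\textbf{The upper bound $\dim_A F\le \dim V+\dim_A\pi_{V^\perp}(F)$.}
First I would reduce to the case $V=V_{\mathcal S}$, since for a smaller subspace one has $\dim_A\pi_{V^\perp}(F)\ge \dim_A\pi_{V_{\mathcal S}^\perp}(F)$ — indeed $\pi_{V^\perp}(F)$ projects onto $\pi_{V_{\mathcal S}^\perp}(F)$ via an orthogonal projection, which does not increase Assouad dimension — while the constant $\dim V$ is smaller; so the bound for $V_{\mathcal S}$ implies it for all subintervals. Wait — this reduction goes the wrong way for the upper bound; more carefully, one should prove the upper bound directly for each $V$. The idea is a covering argument: cover $\pi_{V^\perp}(F)\cap B(\pi_{V^\perp}(x),R)$ efficiently by $\lesssim (R/r)^{\dim_A\pi_{V^\perp}(F)+\epsilon}$ balls of radius $r$, and over each such ball the fibre of $F$ in the $V$-direction is contained in a set of diameter $\lesssim R$ inside a $(\dim V)$-dimensional affine slice, hence covered by $\lesssim (R/r)^{\dim V}$ balls of radius $r$; multiplying gives the bound. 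This is the routine half and I would not dwell on it.

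\smallskip

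\textbf{The lower bound $\dim_A F\ge \dim V+\dim_A\pi_{V^\perp}(F)$.}
This is the heart of the matter and is where the ``appropriate weak tangent'' advertised in the abstract enters. The strategy is: (i) show $\dim_A\pi_{V^\perp}(F)$ is itself realised along the self-similar structure, and that by the Moran-type construction one can find, at arbitrarily small scales $R$, a ball $B(x,R)$ such that $\pi_{V^\perp}(F)\cap B(\pi_{V^\perp}(x),R)$ already requires $\gtrsim (R/r)^{\dim_A\pi_{V^\perp}(F)}$ balls of radius $r$; (ii) use the overlapping directions: for each basis direction $\omega_j$ of $V$, the maps $\Phi_k=S_{\alpha_k}^{-1}S_{\beta_k}-I$ with $\|\Phi_k\|_\infty\to0$ and the derivative control \eqref{boundderivative1} produce, after rescaling, a translate of $F$ by an arbitrarily small vector in direction $\omega_j$ sitting inside (a fixed iterate of) $F$; iterating this within a single fixed large-scale copy lets one pack $\gtrsim (R/r)^{\dim V}$ disjoint translates of the structure from (i) inside $B(x,R)$, giving the product lower bound. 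Formally this is cleanest phrased via weak tangents: construct a weak tangent $T$ of $F$ containing $[0,1]^{\dim V}\times K$ where $K$ is a weak tangent of $\pi_{V^\perp}(F)$ realising its Assouad dimension, and invoke the standard fact (Mackay–Tyson / Fraser) that $\dim_A F\ge\dim_A T\ge \dim V+\dim_A K=\dim V+\dim_A\pi_{V^\perp}(F)$. The main obstacle is (ii): one must simultaneously generate \emph{independent} small translations along all of $V_{\mathcal S}$ at compatible scales and do so \emph{inside a common copy} of $F$, which requires carefully composing the words $\alpha_k,\beta_k$ for different overlapping directions and controlling how the orthogonal parts $O_{\alpha_k}$ rotate the accumulated translations — this is exactly where condition \eqref{boundderivative1} is used to guarantee the translation part dominates and the limiting geometry is a genuine line segment rather than being smeared out.

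\smallskip

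\textbf{Consequences.}
Applying the formula with $V=V_{\mathcal O}$ gives \eqref{formula1} immediately. For the strict inequality: if $\dim V<d$ then $V^\perp\neq\{0\}$, so $\pi_{V^\perp}(F)$ is a nonempty set, hence $\dim_A\pi_{V^\perp}(F)\ge0$; but in fact $\pi_{V^\perp}(F)$ cannot be a single point — otherwise $F$ would lie in an affine copy of $V$, contradicting $F\Subset\rr^d$ when $\dim V<d$ — so $\dim_A\pi_{V^\perp}(F)>0$, and \eqref{formula} yields $\dim_A F=\dim V+\dim_A\pi_{V^\perp}(F)>\dim V$. (When $\dim V=d$ the projection is onto $\{0\}$ and the inequality degenerates to equality, consistent with the statement.)
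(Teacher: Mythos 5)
Your overall strategy for \eqref{formula} (upper bound by a product/covering argument, lower bound by building a weak tangent containing a $\dim V$-dimensional cube times a set with the projection's Assouad dimension, then Mackay--Tyson) is the same route the paper takes, but as written it has genuine gaps in the lower bound. First, you treat each basis vector $\omega_j$ of an \emph{arbitrary} subspace $V\subset V_{\mathcal S}$ as if it came equipped with sequences $\alpha_k,\beta_k$ and maps $\Phi_k=S_{\alpha_k}^{-1}S_{\beta_k}-I$ satisfying (\ref{boundderivative1}). That is not given: $V_{\mathcal S}$ is defined only as the \emph{span} of the overlapping directions, so a basis of a proper subspace $V$ need not consist of overlapping directions at all. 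The paper closes this with Theorem \ref{teovec} (every unit vector of $V_{\mathcal S}$ is an overlapping direction), which is itself a nontrivial result proved via the tangent construction; alternatively one could prove the lower bound only for $V=V_{\mathcal S}$ (whose basis can be taken among overlapping directions by definition) and deduce the general case from $\dim_A\pi_{V^\perp}(F)\le(\dim V_{\mathcal S}-\dim V)+\dim_A\pi_{V_{\mathcal S}^\perp}(F)$, but you do neither. Second, the step you yourself flag as ``the main obstacle'' --- producing, inside a single similar copy of $F$, translations along several independent directions at compatible scales while controlling the orthogonal parts --- is exactly the content of Theorem \ref{teotangent} and occupies most of the paper (the inductive choice of $k_j,m_j,\nu_j$, the cone estimates (\ref{increase})--(\ref{cone}), and the uniformity over $x\in F$ needed so the tangent contains $W+F$ rather than just $W+x_0$); gesturing at it does not constitute a proof. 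Also note the paper's tangent contains $W+F$ and the product $B_s(a,\xi/3)\times\pi_{V^\perp}(F_\tau)$ is then extracted \emph{inside} it; your stronger demand of a tangent containing $[0,1]^{\dim V}\times K$ with $K$ a weak tangent of $\pi_{V^\perp}(F)$ realising its Assouad dimension would require an additional compatibility argument you do not supply, and is unnecessary.

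The argument for the final assertion is incorrect as stated. From $\dim V<d$ you infer that $\pi_{V^\perp}(F)$ is not a single point and conclude $\dim_A\pi_{V^\perp}(F)>0$; but a compact set with more than one point (even an infinite one, e.g.\ $\{0\}\cup\{2^{-2^n}:n\in\nn\}$) can have Assouad dimension $0$, and for a general subspace $V\subset V_{\mathcal S}$ the projection $\pi_{V^\perp}(F)$ need not be self-similar (since $V^\perp$ need not be invariant under the orthogonal parts), so no soft argument applies. The paper instead proves the quantitative Lemma \ref{lemmaunifperf}: using $d+1$ points of $F$ spanning $\rr^d$ and iterating inside small cylinders $F_i$, every projection $\pi_U(F\cap B(x,R))$ contains at least $C(R/r)^t$ $r$-separated points for a uniform $t>0$, whence $\dim_A\pi_{V^\perp}(F)\ge t>0$ and the strict inequality follows from \eqref{formula}. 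You would need an argument of this type to complete your proof.
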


Identity (\ref{formula1}), which is immediate from (\ref{formula}), express the Assouad dimension of $F$ in a way independent of the IFS. Note that (\ref{formula}) holds trivially in case that $\mathcal S$ satisfies the weak separation property.

In general, $\dim{V_\mathcal S}$ may not be easy to find, and the convenience of formula (\ref{formula}) is that it deals with any subspace of $V_{\mathcal S}$, which may simplify the calculation of the Assouad dimension. We provide some examples with homogeneous self-similar sets later in Section \ref{sectionexamples}. 

Theorem \ref{paththeorem} also holds in the more general context of graph directed self-similar sets, but for simplicity in the exposition we postpone its formulation and proof to Section \ref{section:graph}.

We do not know if the Assouad dimension of the projection appearing in (\ref{formula1}) is strictly greater than its Hausdorff dimension. See  Example \ref{Example} (c) in Section \ref{sectionexamples}.

\begin{question}\label{question:proy}
It is true that $\dim_A F=\dim_O F+\dim_H \pi_{V_{\mathcal O}^{\perp}}(F).$ 
\end{question}

In Section \ref{section:structure} we show that $V_{\mathcal S}$ is invariant under the orthogonal parts of the maps from $\mathcal S$. More precisely,  $Ov\in V_{\mathcal S}$ whenever $O\in\overline{\{O_\gamma:\gamma\in \mathcal I^\ast\}}$ and $v\in V_{\mathcal S}$; see Theorem \ref{teovec}, where it is also shown that any unit vector in $V_\mathcal S$ is an overlapping direction. In particular, the number of linearly independent vectors in the orbit $\{O^jv\}_{j\in\mathbb{Z}}$ gives a lower bound for the Assouad dimension of the set. Some particular situations are described in the next corollary.

We denote by $G_{d,k}$ the family of all orthogonal projections onto $k$-dimensional subspaces of $\rr^d$. 

\begin{corollary}\label{Coroconseq}
Assume that the self-similar set $F\Subset\rr^d$, $d\ge2$, is the attractor of an IFS that does not satisfy the weak separation property. 
\begin{enumerate}
\item If ${\{O_\gamma:\gamma\in \mathcal I^\ast\}}$ is dense in $O(d)$ or $SO(d)$, then $\dim_A F=d$. Moreover, for any $\pi\in G_{d,k}$ we have $\dim_A\pi F=k$. 
\item If there is an overlapping direction $v$ not contained in any invariant subspace of dimension $1$ of some  $O\in G$, then $\dim_A F\ge 2$. 
\item If $d=2$ and the orthogonal part of one of the similitudes from the IFS is a rotation by an angle different from $0$ and $\pi$, then  $\dim_A F=2$.
\item Assume $d=3$ and that the orthogonal part $O$ of one of the  similitudes from the IFS contains a rotation by an angle different from $0$ and $\pi$.
\begin{enumerate}
\item If there is an overlapping direction not contained in a non trivial invariant subspace of $O$, then $\dim_A F=3$. 
\item If an overlapping direction is contained in the $2$ dimensional invariant subspace of $O$, then $\dim_A F>2$.
\end{enumerate}
\end{enumerate}
\end{corollary}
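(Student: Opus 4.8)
The plan is to derive Corollary \ref{Coroconseq} entirely from Theorem \ref{paththeorem} together with the structural result that $V_{\mathcal S}$ (and hence $V_{\mathcal O}$) is invariant under every $O\in\overline{\{O_\gamma:\gamma\in\mathcal I^\ast\}}$ and that every unit vector of $V_{\mathcal S}$ is an overlapping direction (Theorem \ref{teovec}), plus the basic fact (the Remark after the definition of overlapping directions, via Lemma \ref{lemmaFraser}) that failure of the WSP forces $\dim V_{\mathcal S}\ge 1$. The unifying mechanism in all four parts is: take a nonzero overlapping direction $v$, act on it by suitable limit products of orthogonal parts to produce several linearly independent vectors inside $V_{\mathcal S}$, conclude $\dim V_{\mathcal S}$ is large, and then apply \eqref{formula} with $V=V_{\mathcal S}$ (or a well-chosen subspace of it) to get the stated lower bound; when $\dim V_{\mathcal S}=d$ one also uses the ``in particular'' part of the theorem, since for $\pi\in G_{d,k}$ one has $\pi F$ arising from projecting, and the formula applied with an appropriate $k$-dimensional $V\subset V_{\mathcal S}$ gives $\dim_A F\ge k+\dim_A\pi_{V^\perp}(F)\ge\dim_A\pi_{V}(F)$ forced up to $k$ by monotonicity $\dim_A\pi F\le k$; I will make this last comparison precise.

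Concretely: for (1), density of $\{O_\gamma\}$ in $O(d)$ or $SO(d)$ means its closure contains $SO(d)$, which acts transitively on $S^{d-1}$; starting from one overlapping direction $v$ and applying elements of the closure that map $v$ to a basis of $\rr^d$, invariance of $V_{\mathcal S}$ gives $V_{\mathcal S}=\rr^d$, so $\dim_O F=d$ and \eqref{formula1} yields $\dim_A F=d$; the statement $\dim_A\pi F=k$ follows because projecting the IFS onto a $k$-dimensional subspace (or more directly, picking a $k$-dimensional $V\subset V_{\mathcal S}=\rr^d$ with $V^\perp$ mapping onto the complement) forces the projected set to have full Assouad dimension $k$ by the same formula applied in $\rr^k$, noting that the projected system still fails WSP with all directions overlapping. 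For (2): if $v\in V_{\mathcal S}$ is an overlapping direction and $O\in G$ (here $G=\overline{\{O_\gamma\}}$, as implicit from the preceding paragraph) has no $1$-dimensional invariant subspace containing $v$, then $Ov$ is not a scalar multiple of $v$, so $v$ and $Ov$ are linearly independent vectors in the $O$-invariant $V_{\mathcal S}$, giving $\dim V_{\mathcal S}\ge 2$ and $\dim_A F\ge 2$ by \eqref{formula}. For (3), $d=2$: a rotation $R_\theta$ with $\theta\notin\{0,\pi\}$ has no real eigenvector, hence no $1$-dimensional invariant subspace at all, so for any overlapping direction $v$ (one exists, since WSP fails) the vectors $v,R_\theta v$ span $\rr^2$; invariance gives $V_{\mathcal S}=\rr^2$ and \eqref{formula1} gives $\dim_A F=2$. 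For (4), $d=3$: an orthogonal matrix whose rotation angle is in $(0,\pi)\cup(\pi,2\pi)$ has a unique $1$-dimensional invariant subspace (its axis) and a unique $2$-dimensional invariant subspace (the orthogonal plane), on which it acts as a planar rotation by that angle with no invariant line; in case (a) $v$ lies outside every nontrivial invariant subspace, so $v,Ov,O^2 v$ are linearly independent (the orbit of a vector with nonzero component along the axis and in the rotating plane spans $\rr^3$ for such an angle — this is the one computation to check, reducing to the planar rotation being irrational-angle-like in the weak sense of having a $2$-dimensional orbit span, which holds for any $\theta\notin\{0,\pi\}$), hence $V_{\mathcal S}=\rr^3$ and $\dim_A F=3$; in case (b), $v$ lies in the $2$-dimensional rotating plane $W$, on which $O$ has no invariant line, so $v,Ov$ are linearly independent and $\dim V_{\mathcal S}\ge 2$ — but we want strict inequality $\dim_A F>2$, which is exactly the ``Moreover'' clause of Theorem \ref{paththeorem}: $\dim V<\dim_A F$ whenever $\dim V<d$, applied with $V=\mathrm{span}\{v,Ov\}$ of dimension $2<3=d$, giving $\dim_A F>2$.

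The main obstacle I anticipate is the orbit-spanning claim in (4)(a): showing that for an orthogonal $O$ acting on $\rr^3$ as identity-free on a plane $W$ (rotation by $\theta\notin\{0,\pi\}$) and by $\pm1$ on the axis $\ell=W^\perp$, a vector $v$ with nonzero components in both $W$ and $\ell$ has $\{v,Ov,O^2v\}$ linearly independent. One should handle the axis eigenvalue $+1$ and $-1$ cases separately; if it is $+1$, the $\ell$-component of $O^jv$ is constant while the $W$-components trace a planar rotation orbit that is $2$-dimensional-spanning for $\theta\notin\{0,\pi\}$, so the three vectors span $\rr^3$; if it is $-1$ one passes to $O^2$ which is $+1$ on $\ell$ and rotation by $2\theta$ on $W$, and one must exclude $2\theta\in\{0,\pi\}\pmod{2\pi}$, i.e.\ $\theta\in\{0,\pi/2,\pi,3\pi/2\}$ — the values $\pi/2,3\pi/2$ need a small separate argument (there $O^2=-I$ on $W$ has the plane invariant but still no invariant line on $W$, so $v,Ov$ already span $W$ and together with $\ell$-direction, obtained e.g.\ as $v+O^2v$ when the axis eigenvalue is $-1$... ) — this bookkeeping is the only place real case analysis is needed, and it is elementary linear algebra; everything else is a direct invocation of the two cited theorems. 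The remaining parts are essentially immediate once the invariance of $V_{\mathcal S}$ and the ``every unit vector of $V_{\mathcal S}$ is overlapping / $\dim V_{\mathcal S}\ge 1$'' facts are in hand.
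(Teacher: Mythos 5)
Everything except the projection claim in (1) is correct and follows the paper's intended route: the paper derives (1)--(4) directly from Theorem \ref{paththeorem} and Theorem \ref{teovec}, exactly as you do --- invariance of $V_{\mathcal S}$ under $G$ plus a short orbit computation gives the required lower bound on $\dim V_{\mathcal S}$, formula (\ref{formula}) converts this into a bound on $\dim_A F$, and in (4)(b) the strict inequality is precisely the ``Moreover'' clause of Theorem \ref{paththeorem} applied to the $2$-dimensional span of $v$ and $Ov$. Your linear-algebra check in (4)(a) is genuinely needed but is easier than your sketch suggests: writing $v=ae+w$ with $a\neq 0$ and $0\neq w$ in the rotation plane, a direct computation shows $v,Ov,O^2v$ are linearly independent for every rotation angle $\theta\notin\{0,\pi\}$ and either axis eigenvalue $\pm1$; there is no need to pass to $O^2$, so the exceptional angles $\pi/2,3\pi/2$ you worry about never arise.

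The genuine gap is the second claim of (1), that $\dim_A\pi F=k$ for every $\pi\in G_{d,k}$. Your main justification --- project the IFS onto the $k$-plane and apply ``the same formula in $\rr^k$'' to ``the projected system'' --- does not work: for a general $\pi\in G_{d,k}$ the maps $\pi\circ S_i$ do not induce similarities of the $k$-plane (this would require every $O_i$ to preserve the orthogonal complement of that plane, which for $k<d$ is actually ruled out by the density hypothesis), so $\pi F$ need not be self-similar and Theorem \ref{paththeorem} cannot be applied to it. The parenthetical alternative is mis-stated as well: formula (\ref{formula}) with a $k$-dimensional $V\subset V_{\mathcal S}$ controls $\dim_A\pi_{V^{\perp}}(F)$, not $\dim_A\pi_{V}(F)$, and the chain $\dim_A F\ge k+\dim_A\pi_{V^{\perp}}(F)\ge\dim_A\pi_{V}(F)$ is never established. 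The fix is immediate within your own framework: since $V_{\mathcal S}=\rr^d$, given $\pi=\pi_U$ with $\dim U=k$, apply (\ref{formula}) with $V=U^{\perp}$ (of dimension $d-k$), which yields $d=\dim_A F=(d-k)+\dim_A\pi_U(F)$ and hence $\dim_A\pi_U(F)=k$. Note that this corrected argument differs from the paper's: the paper proves the projection statement via Theorem \ref{teotangent}, observing that a translate of the $d$-parallelotope $W$ lies in a weak tangent of $F$, that $\pi(W)$ then lies in a weak tangent of $\pi(F)$, and concluding $k=\dim_A\pi(W)\le\dim_A\pi(F)\le k$ by (\ref{boundtangent}); your (corrected) route is shorter and bypasses the weak-tangent step for this sub-claim.
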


The proofs of the above statements follow immediately from Theorem \ref{paththeorem} and Theorem \ref{teovec}, with the exception of the projection part from (1), which we postpone to the Remark at the end of Section \ref{section:tangent}.

\begin{rem}The second part of Corollary \ref{Coroconseq} (1) partially answers Question 2.11 from \cite{FO}, where it is asked if the same result holds but without additional (non) separation properties. See also \cite{FK} and \cite{O19} for some recent results related to the Assouad dimension of orthogonal projections.
\end{rem}

As another application of the main result, we answer a question posed by Farkas and Fraser in \cite{FF}. For a self-similar set $F\Subset\rr^d$, with $s=\dim_H F<1$, the following are equivalent (see \cite[Corollary 3.2]{FF}):
\begin{enumerate}[$(i)$]
\item {\em $F$ satisfies the weak separation property};
\item $\hau^s(F)>0$;
\item {\em the Hausdorff and Assouad dimensions of $F$ coincide.}
\end{enumerate}
Here, statement (i) means that there is an IFS for $F$ satisfying the WSP, which in case $s<1$ is equivalent to $(i')$: {\em any IFS for $F$ satisfies the WSP.} Also, the implications (i)$\Rightarrow$(ii)$\Rightarrow$(iii) hold without extra assumptions on the size of the Hausdorff dimension. 
However, given $d\in\nn\setminus\{1\}$ and $s\in(1, d]$, they also exhibit a self-similar set $F\Subset[0,1]^d$ with $\dim_H F=s$ such that one of its generating IFS fails the WSP but $\hau^s(F)>0$; see \cite[Proposition 3.3]{FF}. A natural question they pose to complete this study is (\cite[ Question 3.4]{FF}):

{\em Is it true that for all $d\in\mathbb N\setminus\{1\}$ there exists a self-similar set $F\Subset \rr^d$ such that $F$ fails the weak separation property, $\dim_H F=1$ and $\hau^1(F)>0$?}

We answer this question in the negative. 
 
\begin{corollary}
The answer to the above question is negative.  Moreover, also when $\dim_H F=1$ we have $(i)\Leftrightarrow(i')\Leftrightarrow(ii)\Leftrightarrow(iii)$.  
\end{corollary}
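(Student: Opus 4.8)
The plan is to reduce everything to the single implication $(iii)\Rightarrow(i')$. Indeed, $(i')\Rightarrow(i)$ is trivial (the distinguished generating IFS of $F$ then has the WSP), and the chain $(i)\Rightarrow(ii)\Rightarrow(iii)$ is precisely the part of the Farkas--Fraser equivalences that holds with no restriction on the size of $\dim_H F$. So once $(iii)\Rightarrow(i')$ is in hand, all four statements are equivalent when $\dim_H F=1$, and the negative answer to the question of \cite{FF} drops out: a self-similar set $F\Subset\rr^d$ with $\dim_H F=1$ and $\hau^1(F)>0$ satisfies $(ii)$, hence $(iii)$, hence $(i')$, so \emph{every} IFS generating $F$ has the WSP; in particular $F$ cannot fail the WSP, so no set of the required form exists.

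It remains to prove $(iii)\Rightarrow(i')$ for $F\Subset\rr^d$, $d\ge 2$, with $\dim_H F=1$. Assume $\dim_A F=\dim_H F=1$ and suppose, towards a contradiction, that some IFS $\mathcal S$ generating $F$ fails the WSP. By the Remark following the definition of overlapping directions (equivalently, by Lemma~\ref{lemmaFraser}), failure of the WSP gives $\dim V_{\mathcal S}\ge 1$, so we may fix a one–dimensional subspace $V\subseteq V_{\mathcal S}$. Since $d\ge 2$ we have $\dim V=1<d$, so the ``moreover'' clause of Theorem~\ref{paththeorem} applies and yields the strict inequality $\dim V<\dim_A F$, that is $\dim_A F>1$, contradicting $\dim_A F=1$. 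Hence no generating IFS of $F$ can fail the WSP, which is exactly $(i')$.

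There is no real obstacle here, as the entire content has been absorbed into Theorem~\ref{paththeorem}: this corollary is just a repackaging of its strict-inequality clause. The one point worth flagging is that the clause is invoked with $\dim V<d$, and this is genuinely where $d\ge 2$ enters — which is also the regime in which the Farkas--Fraser question was posed. In the ambient line $\rr$ the argument breaks down ($\dim V_{\mathcal S}$ can only be $0$ or $1=d$), consistently with the fact that there $\dim_H F=1$ already forces $\dim_A F=1$ and so carries no information about the separation of the system.
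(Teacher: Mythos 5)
Your argument is correct and is essentially the paper's own proof: both hinge on the observation that failure of the WSP forces $\dim V_{\mathcal S}\ge 1$, so the strict-inequality clause of Theorem \ref{paththeorem} gives $\dim_A F>1$, contradicting $\dim_A F=\dim_H F=1$. The only cosmetic difference is that you reach $\dim_A F=1$ by invoking the general implication $(ii)\Rightarrow(iii)$ and packaging the whole corollary as $(iii)\Rightarrow(i')$, whereas the paper cites \cite[Corollary 3.1]{FF} (Ahlfors regularity) directly, which is the same underlying fact.
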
 

\begin{proof}
If there exists such a self-similar set $F$, then the hypotheses $\dim_H F=1$ and $\hau^1(F)>0$ imply, by Corollary 3.1 in \cite{FF}, that $F$ is Ahlfors regular; in particular $\dim_A F=1$. But as $F$ is the attractor of an IFS $\mathcal S$ that does not satisfy the WSP, then $1\le\dim V_{\mathcal S}$. This implies $1<\dim_A F$ by Theorem \ref{paththeorem}, and therefore a contradiction. 
\end{proof}

We note that the self-similar set constructed in \cite[Proposition 3.3]{FF} is $F=[0,1]\times E^{d-1}$, where the unit interval is viewed as an homogeneous self-similar set with contraction $r$ and not satisfying the WSP, and $E^{d-1}$ is the Cartesian product of $d-1$ copies of the self-similar on $\rr$ with maps $x\to rx$ and $x\to rx+(1-r)$. Here $r\in(0,1/2]$ is chosen appropriately so that $\dim_H F=s$. Obviously, when $r=1/k$ for some natural $k>1$, there is an IFS for $[0,1]\times E^{d-1}$ that satisfies the open set condition.
\begin{question}
Given any $r\in(0,1/2]$, it is possible to find an IFS for $[0,1]\times E^{d-1}$ that verifies the weak separation property?
\end{question}
A negative answer when $d=2$ and $s<2$, implies that the definition of overlapping direction can be intrinsic even for self-similar sets that are not totally disconnected. Moreover, a negative answer implies that there is a self-similar set with $\dim_H F<2$, $\hau^{\dim_H F}(F)>0$ but no one of its generating IFS satisfies the WSP. Such a self-similar set with dimension $2$ exists by the example in \cite{CJPPS}. 

\subsection{Tangent structure.}\label{section:tangent} The proof of Theorem \ref{paththeorem} relies on the tangent structure of the self-similar set. More precisely, on the construction of an appropriate weak tangent of the set. Essentially, a weak tangent of a set is a limit point, in the Hausdorff metric, of a sequence of similar images of the set intersected with a fixed compact set. Weak tangents often have a simpler structure than that of the original set and can be used to estimate from below the Assouad dimension of the set. 

Recall that the Hausdorff distance between two non-empty compact sets $A, B\subset \rr^d$ is 
$$d_{\mathcal H}(A,B)=\inf\{\epsilon>0: A\subset [B]_\epsilon \ \textrm{and} \ \ B \subset [A]_\epsilon\},$$
where $[A]_\epsilon$ is the closed $\epsilon$-neighbourhood of $A$ defined by $$[A]_\epsilon=\{y\in\rr^d: \|a-y\|\le\epsilon\ \textrm{for some } a\in A\}.$$ 
This distance defines a complete metric on the space of non empty compact subsets of $\rr^d$. 

Let $E$ and $\hat{E}$ be non empty compact subsets of $\mathbb{R}^{d}$. We say that $\hat{E}$ is a \emph{weak tangent} of $E$ if there is a compact subset $X\subset \mathbb{R}^{d}$, that contains both $E$ and $\hat{E}$, and a
sequence of similarity maps $T_{k}:\mathbb{R}^{d}\rightarrow \mathbb{R}^{d}$ such that $T_{k}(E)\cap X\to_{d_{\mathcal H}}\hat{E}$ as $k\rightarrow \infty $. The usefulness of weak tangents is, as shown by Mackay and Tyson, that the following inequality holds 
\begin{equation}\label{boundtangent}
\dim _{A}\hat{E}\leq \dim _{A}E;
\end{equation}
see \cite[Prop. 2.9]{Ma} or \cite[Prop. 6.1.5]{MT}.

We assume that $F\Subset \rr^d$ is a self-similar set that is the attractor of an IFS $\mathcal S$ that does not verify	 the weak separation property. Consider $s$ linearly independent overlapping directions $\omega^1,\ldots, \omega^s$ for $\mathcal S$, generating a vector subspace that we call $V$, where $s\le\dim V_{\mathcal S}$. We denote by $W$ the $s$-parallelotope (that is, an $s$-dimensional generalization of a parallelogram) with the origin $\mathbf{0}$ as one of its vertex and whose edges, radiating from $\mathbf{0}$ and all of unit length, are  parallel to $\omega^1,\ldots, \omega^s$. 

Using techniques from \cite{FHOR}, it is possible to construct a weak tangent of the self-similar set which is convenient to deduce formula (\ref{formula}). Given $A,B\subset\rr^d$, recall the definition of the sumset $A+B=\{a+b: a\in A, b\in B\}$. Also, we write $W+x$ instead of $W+\{x\}$.  

\begin{theorem}\label{teotangent}
Under the above hypotheses and notation, there is a weak tangent of $F$ containing the set $W+F$.
\end{theorem}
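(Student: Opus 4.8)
The plan is to build the tangent by composing the overlap maps $\Phi_k = S_{\alpha_k}^{-1}S_{\beta_k}-I$ with themselves many times and rescaling. Heuristically, each $\Phi_k$ translates a tiny copy of $F$ by roughly the vector $\Phi_k(a)$, whose direction converges to an overlapping direction; iterating a map of the form $I+\Phi_k$ produces a whole arithmetic-progression-like cluster of copies of $F$, and after rescaling by $1/\|\Phi_k(a)\|$ this cluster fills out a unit segment in the overlapping direction, i.e.\ $W+F$ when $s=1$. For general $s$ one runs this argument in $s$ nested stages, one for each direction $\omega^1,\dots,\omega^s$, so that the copies of $F$ end up indexed by an $s$-dimensional grid whose convex hull, after rescaling, is the parallelotope $W$.

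Here is the order I would carry things out. First, fix the base point $a\in F$ and radius $\rho>0$ from the definition of overlapping direction, and record the consequence of the technical condition \eqref{boundderivative1}: since $\rho\|D\Phi_k\|_{\mathrm{op}}\le\|\Phi_k(a_k)\|\le\|\Phi_k(a)\|$, the map $\Psi_k:=S_{\alpha_k}^{-1}S_{\beta_k}$ is a similarity that is extremely close to the identity on $B(a,\rho)$ relative to the displacement it produces at $a$ — its rotational part and its contraction ratio differ from $I$ and $1$ by $O(\|\Phi_k(a)\|)$, while it moves $a$ by exactly $\|\Phi_k(a)\|$. Second, I would show that for each $k$ the orbit $a,\ \Psi_k(a),\ \Psi_k^2(a),\dots$ stays in $B(a,\rho)$ for about $N_k\sim \rho/\|\Phi_k(a)\|$ steps, and that these orbit points, once translated back by $a$ and divided by $\|\Phi_k(a)\|$, become $\delta_k$-dense (with $\delta_k\to0$) in the unit segment $[\mathbf 0,\omega^1]$; the curvature/error terms are controlled precisely by the bound just derived. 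Crucially, $\Psi_k$ is built from the IFS maps, so $\Psi_k^j$ maps $F$ into a union of scaled copies of $F$ sitting at these orbit points — this is where one invokes that $F=\bigcup S_i(F)$ and that $S_{\alpha_k}^{-1}S_{\beta_k}^j$ composed appropriately still sends a sub-copy of $F$ where we want it. Third, I would iterate this construction over the $s$ directions: having produced maps realizing $W_1+F$ (with $W_1$ the segment along $\omega^1$) as a tangent-like object inside a fixed ball, apply the same procedure with the $\omega^2$-overlap maps to each of those copies, and so on, obtaining after $s$ rounds a family of similar copies of $F$ whose base points are $(\delta)$-dense in the grid spanning $W$. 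Finally, I would pass to the limit in the Hausdorff metric: choose the compact set $X$ to be a large closed ball containing $F$ and all the relevant images (uniformly bounded because everything happens inside $B(a,\rho)$ before rescaling, and the rescaling factor is chosen to normalize), extract a convergent subsequence of $T_k(F)\cap X$, and check that the limit contains $W+F$ because it contains every point of the form $w + S_i\cdots S_j(x)$ arising as a limit of the grid-indexed copies, and these are dense in $W+F$.

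The main obstacle I expect is the bookkeeping in the third step — showing that the $s$-fold iteration genuinely produces an $s$-dimensional grid of copies rather than collapsing, and that the errors from the $s$ successive linearizations do not accumulate. The point is that the similarities $\Psi_k^{(1)},\dots,\Psi_k^{(s)}$ associated to different overlapping directions need not commute, and their orthogonal parts drift; one must argue that since each is within $O(\|\Phi_k(a)\|)$ of the identity, the commutator errors are of second order and wash out in the rescaled limit, while the linear independence of $\omega^1,\dots,\omega^s$ guarantees the limiting grid is nondegenerate. A secondary technical point is verifying the hypothesis in Mackay--Tyson's inequality \eqref{boundtangent} is not needed here — we only need the existence statement — but one still must exhibit a \emph{single} compact $X$ containing $F$, the tangent, and all $T_k(F)\cap X$; this is handled by noting the construction is entirely local to $B(a,\rho)$ and the normalizing similarities $T_k$ have uniformly bounded image on that ball.
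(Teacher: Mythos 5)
Your heuristic is the right one (overlap maps act like small translations in the overlapping directions, and one wants an approximate arithmetic grid of near-translated copies of $F$), but the central mechanism you propose --- iterating a \emph{single} map $\Psi_k=S_{\alpha_k}^{-1}S_{\beta_k}$ about $N_k\sim\rho/\|\Phi_k(a)\|$ times and rescaling --- has a genuine gap. The technical condition (\ref{boundderivative1}) only bounds the derivative error by $\|D\Phi_k\|_{\mathrm{op}}\le\rho^{-1}\|\Phi_k(a_k)\|$, i.e.\ the rotational/scaling defect of $\Psi_k$ per step is of the \emph{same} order as the translation it produces, not smaller. Over the $\sim 1/\|\Phi_k(a)\|$ iterations needed to traverse a segment of fixed length, these first-order defects accumulate to an error of order one: the step vector at stage $j$ is $(D\Psi_k)^j\Phi_k(a)$, and $\|(D\Psi_k)^j-I\|$ can reach size $1$ when $j\approx N_k$. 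So the rescaled orbit need not approximate the straight segment $[\mathbf{0},\omega^1]$ at all --- it can limit onto a macroscopically curved arc, or be cut short by attraction to the fixed point of $\Psi_k$ when its ratio differs from $1$ by order $\|\Phi_k(a)\|$. (Truncating the orbit to $\delta N_k$ steps controls the drift but then only produces a segment of length $\delta\rho$, and rescaling to unit length destroys the scale of $F$, so it does not give $W+F$.) This is exactly why the paper does \emph{not} iterate one overlap map: at every step $j$ it selects a fresh pair $(\alpha_{k_j},\beta_{k_j})$, renormalizes with $f_i^{m_j}$ so that each increment has size between $c_\ast c_{\gamma^{(i)}}\epsilon$ and $3\epsilon$ (see (\ref{fitpaso}) and (\ref{increase})), and re-corrects the accumulated orthogonal part by a word $\nu_j$ taken from a finite $\epsilon^2$-net of the compact group $G$, so that the directional error per step is $O(\epsilon^2)=\eta$ over $O(1/\epsilon)$ steps (property (\ref{cone})); no error accumulation occurs.

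A second, related gap is your claim that the cluster of copies produced by the powers $\Psi_k^j$ sits inside images of $F$ under a single similarity. The definition of weak tangent requires one map $T_k$ per stage with the whole approximating set inside $T_k(F)\cap X$, but $(S_{\alpha_k}^{-1}S_{\beta_k})^j$ does not factor as a fixed similarity composed with a forward word of the IFS, so the containment is unjustified as stated. The paper's inductive definition of $g_j$ and $h_j$ in (\ref{gjhj}) is engineered precisely so that $g_j\circ h_j=g_l\circ S_\beta$ for all $j\le l$ (property (\ref{inductive})), which yields $W_n(F)\subset T_n(F)$ with $T_n=g_p$; some such bookkeeping is indispensable and is missing from your outline. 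Finally, to pass from a tangent containing $W+x_0$ to one containing $W+F$ you need the increments to be almost independent of the base point $x\in F$; this is the purpose of the $\theta$-refinement leading to (\ref{bound relative}), and your ``each $\Psi_k^j$ is nearly a translation on all of $F$'' idea is the right thought here, but it again only works once the accumulation problem above has been resolved.
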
 

\begin{rem}
Now we can complete the proof the projection part of Corollary \ref{Coroconseq} $(1)$. The denseness hypothesis implies $\dim V_{\mathcal S}=d$, and then there is translation of the $d$-parallelotope $W$ contained in a weak tangent of $F$.   Because any projection of a weak tangent of a set is contained in a weak tangent of the projection of the set (see \cite[Section 3.3.1]{Fr18}), then, given any projection $\pi\in G_{d,k}$, we have that $\pi(W)$ is a subset of a weak tangent to $\pi(F)$, and consequently, $$k=\dim_A \pi(W)\le\dim_A \pi(F)\le k,$$ where we used (\ref{boundtangent}) in the second inequality.

Note that, alternatively, we can argue that there is a weak tangent of $F$ containing the unit cube $[0,1]^d$ since $\dim_A F=d$ (see \cite[Theorem 2.4]{FY3}). 
\end{rem}

\subsection*{Organization of the paper}  In Section \ref{section:prelims}, we give some preliminary definitions and results needed for the proof of Theorem \ref{teotangent}, which is given in Section \ref{section:proofteotangent}. Then, in Section \ref{section:structure}, we state some structural properties of the set of overlapping directions $V_{\mathcal S}$, and using this result, Theorem \ref{paththeorem} is deduced later in that section. In Section \ref{sectionexamples} we give some examples and in Section \ref{section:graph} we extend Theorem \ref{paththeorem} to the context of graph directed self-similar sets. 

A final application of the result on the tangent structure is given in Section \ref{section:weaknoweak}. There, we consider the topological structure of a class of self-similar sets which are both overlapping and non overlapping. More precisely, we consider the self-similar sets that are attractors of iterated function systems not satisfying the WSP and also, that are attractors of iterated function systems satisfying the open set condition. We show in Theorem \ref{Theorem:product-structure} that a self-similar set in this class contains the Cartesian product of a non trivial cube in the maximal overlapping vector space $V_{\mathcal O}$  with a set whose Assouad dimension is $\dim_A\pi_{V_{\mathcal O}^\perp}(F)$.

\section{Some technical preliminaries}\label{section:prelims}
We begin this section showing that the set of overlapping directions is non empty if the weak separation condition does not hold. The following lemma  is a slight modification of \cite[Lemma 3.11]{FHOR}, which we refer for the proof. Recall that $\|\cdot\|_\infty=\|\cdot\|_{L^\infty([0,1]^d)}$. 

\begin{lemma}\label{lemmaFraser}
Let $\Phi_k$ be a sequence of affine maps in $\rr^d$ such that $0<\|\Phi_k\|_{{\infty}}\to0$ as $k\to \infty$. Then, given any set $\Gamma\Subset\rr^d$, there exist $\rho>0$, $a\in \Gamma$ and a subsequence $k_j$ such that for every $j$ we have
\begin{equation}\label{boundderivative}
\rho\|D\Phi_{k_j}\|_{\mathrm{op}}\le\|\Phi_{k_j}(a_{k_j})\|, 
\end{equation}
where $a_k$ is any point in ${\rm argmin}\{\|\Phi_k(x)\|: x\in B(a,\rho)\}$.
\end{lemma}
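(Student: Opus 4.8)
The plan is to reduce the estimate \eqref{boundderivative} to an elementary bound for affine maps evaluated on a fixed affine frame contained in $\Gamma$, together with a pigeonhole argument. Write each map as $\Phi_k(x)=A_kx+v_k$, where $A_k=D\Phi_k$. Since $\Gamma\Subset\rr^d$, the affine hull of $\Gamma$ is all of $\rr^d$, so I may fix, once and for all, $d+1$ affinely independent points $p_0,\ldots,p_d\in\Gamma$; then $u_i:=p_i-p_0$, $1\le i\le d$, form a basis of $\rr^d$, and I let $L$ be the invertible matrix with columns $u_1,\ldots,u_d$. As $\Phi_k$ is affine, the $i$-th column of $A_kL$ is $\Phi_k(p_i)-\Phi_k(p_0)$, so
\[ \|A_k\|_{\mathrm{op}}\;\le\;\|L^{-1}\|_{\mathrm{op}}\,\|A_kL\|_{\mathrm{op}}\;\le\;C\,\max_{0\le i\le d}\|\Phi_k(p_i)\|, \]
where $C:=2\sqrt d\,\|L^{-1}\|_{\mathrm{op}}$ depends only on the chosen points, hence only on $\Gamma$ and not on $k$. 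This is exactly where the hypothesis $\Gamma\Subset\rr^d$ is used: if $\Gamma$ were contained in a hyperplane, the restriction of $\Phi_k$ to $\Gamma$ would not control the part of $A_k$ acting in the normal direction. The degenerate case $A_k=0$ is trivial, since then $\rho\|D\Phi_k\|_{\mathrm{op}}=0\le\|\Phi_k(a_k)\|$ for every $\rho>0$ and every point $a$, so I may assume $A_k\neq0$ from now on.

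Next I would pigeonhole. For each $k$ pick $i(k)\in\{0,\ldots,d\}$ with $\|\Phi_k(p_{i(k)})\|=\max_i\|\Phi_k(p_i)\|$; since the index set is finite, some value of $i(\cdot)$ is attained for infinitely many $k$, and relabeling $p_0,\ldots,p_d$ so that this common index is $0$ yields a subsequence $\{k_j\}$ along which
\[ \|\Phi_{k_j}(p_0)\|=\max_i\|\Phi_{k_j}(p_i)\|\;\ge\;\|A_{k_j}\|_{\mathrm{op}}/C . \]
I then set $a:=p_0\in\Gamma$ and $\rho:=1/(2C)>0$.

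Finally I estimate $\|\Phi_{k_j}\|$ on $B(a,\rho)$. Since $\Phi_{k_j}$ is affine, $\Phi_{k_j}(x)-\Phi_{k_j}(a)=A_{k_j}(x-a)$ for every $x$, so for $x\in B(a,\rho)$ the reverse triangle inequality gives
\[ \|\Phi_{k_j}(x)\|\;\ge\;\|\Phi_{k_j}(a)\|-\|A_{k_j}\|_{\mathrm{op}}\,\|x-a\|\;\ge\;\Bigl(\tfrac1C-\rho\Bigr)\|A_{k_j}\|_{\mathrm{op}}\;=\;\rho\,\|A_{k_j}\|_{\mathrm{op}}, \]
using $\|x-a\|\le\rho$ and the choice $\rho=1/(2C)$. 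As $B(a,\rho)$ is compact and $\|\Phi_{k_j}(\cdot)\|$ is continuous, $\mathrm{argmin}\{\|\Phi_{k_j}(x)\|:x\in B(a,\rho)\}$ is non-empty, and the displayed inequality holds at every point of it, which is precisely \eqref{boundderivative}.

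The argument is essentially elementary and I do not anticipate a real obstacle; the one point that needs care is the order of the quantifiers in the pigeonhole step, since a \emph{single} pair $(a,\rho)$ must work along an entire subsequence — this forces the comparison constant $C$ to be pinned down (depending only on $\Gamma$) before any subsequence is extracted. I would also note that, after the trivial reduction of the case $A_k=0$, the proof uses nothing at all about $\|\Phi_k\|_\infty$; the convergence $\|\Phi_k\|_\infty\to0$ is kept in the statement only to match \cite[Lemma 3.11]{FHOR} and the way the lemma is applied later.
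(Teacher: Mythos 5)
Your proof is correct, and it takes a genuinely different route from the one the paper relies on: the paper gives no argument of its own but defers to \cite[Lemma 3.11]{FHOR}, where the statement is obtained by a normalization--compactness argument (one passes to the normalized maps $\Phi_k/\|\Phi_k\|_\infty$, extracts, by compactness of the finite-dimensional space of affine maps bounded on the unit cube, a limiting affine map $\Phi$ with $\|\Phi\|_\infty=1$, and uses that the zero set of $\Phi$ lies in a hyperplane, which $\Gamma\Subset\rr^d$ cannot be contained in, to locate $a$ and $\rho$; the subsequence is the one along which the normalized maps converge). You replace this compactness step by a direct linear-algebra estimate on a fixed affine frame $p_0,\dots,p_d\in\Gamma$ plus a pigeonhole over which frame point realizes the maximum. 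What your route buys is that it is quantitative and self-contained: the constants are explicit ($\rho=1/(2C)$ with $C=2\sqrt d\,\|L^{-1}\|_{\mathrm{op}}$ depending only on the chosen frame), the conclusion is slightly stronger (the bound $\rho\|D\Phi_{k_j}\|_{\mathrm{op}}\le\|\Phi_{k_j}(x)\|$ holds at \emph{every} $x\in B(a,\rho)$, not only at the minimizer), and, as you correctly observe, the hypothesis $0<\|\Phi_k\|_\infty\to0$ plays no role in the lemma itself; it is only needed later, when the lemma is applied to define overlapping directions. One cosmetic point: after the pigeonhole you ``relabel'' the frame so that the maximizing index is $0$, but $L$, and hence $C$, was defined using $p_0$ as the base point; you should either keep the original constant $C$ (the inequality $\|D\Phi_k\|_{\mathrm{op}}\le C\max_{0\le i\le d}\|\Phi_k(p_i)\|$ involves only the unordered set of frame points, so it is unaffected by renaming) or take the maximum of the finitely many constants obtained from the possible base points. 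As written this is a harmless notational slip, not a gap.
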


Inequality (\ref{boundderivative}) is the technical condition (\ref{boundderivative1}) from the definition of overlapping direction when $\Phi_k=S_{\alpha_k}^{-1}\circ S_{\beta_k}-I$. 

\begin{corollary}
If $\mathcal S$ is an IFS not satisfying the weak separation property and with attractor $F\Subset\rr^d$, then the set of overlapping directions is non empty. 
\end{corollary}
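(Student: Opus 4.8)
The plan is to unwind the definitions and reduce the statement to Lemma~\ref{lemmaFraser}. Since $\mathcal S$ does not satisfy the weak separation property, by the reformulation given in the excerpt there are sequences $\{\alpha_k\},\{\beta_k\}$ in $\mathcal I^\ast$ with $\alpha_k\neq\beta_k$ such that $0<\|\Phi_k\|_\infty\to 0$ as $k\to\infty$, where $\Phi_k=S_{\alpha_k}^{-1}\circ S_{\beta_k}-I$. Each $\Phi_k$ is an affine map on $\rr^d$ (a difference of an affine map and the identity), so the sequence $\{\Phi_k\}$ satisfies the hypothesis of Lemma~\ref{lemmaFraser}.

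Next I would apply Lemma~\ref{lemmaFraser} with the choice $\Gamma=F$; this is legitimate precisely because $F\Subset\rr^d$ by assumption. The lemma produces a radius $\rho>0$, a point $a\in F$, and a subsequence $(k_j)$ such that the technical condition $\rho\|D\Phi_{k_j}\|_{\mathrm{op}}\le\|\Phi_{k_j}(a_{k_j})\|$ holds for all $j$, where $a_{k}$ is any point in $\mathrm{argmin}\{\|\Phi_k(x)\|:x\in B(a,\rho)\}$. Passing to this subsequence (and relabelling), we now have a sequence of words realizing the failure of the WSP and simultaneously verifying the inequality~(\ref{boundderivative1}) with this particular $a$ and $\rho$, which is exactly the setup required by the definition of overlapping direction.

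Finally I would extract an overlapping direction. Consider the sequence of unit vectors $u_j:=\Phi_{k_j}(a)/\|\Phi_{k_j}(a)\|\in S^{d-1}$; note that $\Phi_{k_j}(a)\ne 0$ for all large $j$, since otherwise $a=a_{k_j}$ would give $\|\Phi_{k_j}(a_{k_j})\|=0$ and then~(\ref{boundderivative1}) would force $\Phi_{k_j}\equiv 0$ on all of $\rr^d$, contradicting $\|\Phi_{k_j}\|_\infty>0$ (alternatively one may shrink $\rho$ or shift $a$ slightly within $F\Subset\rr^d$ to avoid this degeneracy). Since $S^{d-1}$ is compact, $\{u_j\}$ has a convergent subsequence with limit $\omega\in S^{d-1}$, and by definition this $\omega$ is an overlapping direction for $\mathcal S$. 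Hence the set of overlapping directions is non-empty. The only point requiring a little care is the non-degeneracy $\Phi_{k_j}(a)\ne 0$, but this is immediate from the technical condition once one observes it is the content of~(\ref{boundderivative1}); there is no genuine obstacle beyond correctly threading the subsequences through the definitions.
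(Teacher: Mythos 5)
Your proof is correct and follows essentially the same route as the paper, which obtains the corollary directly from Lemma \ref{lemmaFraser} applied with $\Gamma=F$ (valid since $F\Subset\rr^d$), followed by compactness of $S^{d-1}$ to extract a limit of the unit vectors $\Phi_{k_j}(a)/\|\Phi_{k_j}(a)\|$. Your additional check that $\Phi_{k_j}(a)\neq 0$ (via the technical condition forcing $D\Phi_{k_j}=0$ and hence $\Phi_{k_j}\equiv 0$, contradicting $\|\Phi_{k_j}\|_\infty>0$) is a correct and welcome clarification of a point the paper leaves implicit.
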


Now we turn to the preliminaries for the proof of Theorem \ref{teotangent}. 
We will need the following two lemmas.

\begin{lemma}\label{lemma:ultimo}
In the definition of overlapping direction, the technical condition (\ref{boundderivative1}) implies for each $k$ that 
\begin{equation*}
\max\bigl\{\|\Phi_{k}(x)\|: x\in B(a,\rho)\bigr\}\le 3\min\bigl\{\|\Phi_{k}(x)\|: x\in B(a,\rho)\bigr\}.
\end{equation*}  
\end{lemma}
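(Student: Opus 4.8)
Write $\Phi = \Phi_k$ and $a_k$ for a point in $\mathrm{argmin}\{\|\Phi(x)\|: x \in B(a,\rho)\}$, so condition (\ref{boundderivative1}) reads $\rho\|D\Phi\|_{\mathrm{op}} \le \|\Phi(a_k)\|$. The plan is to control the oscillation of $\|\Phi(\cdot)\|$ over the ball $B(a,\rho)$ purely in terms of its Lipschitz constant, which is exactly $\|D\Phi\|_{\mathrm{op}}$ since $\Phi$ is affine. For any two points $x, y \in B(a,\rho)$ we have $\|x - y\| \le 2\rho$, hence by the reverse and ordinary triangle inequalities
\begin{equation*}
\bigl|\,\|\Phi(x)\| - \|\Phi(y)\|\,\bigr| \le \|\Phi(x) - \Phi(y)\| = \|D\Phi(x - y)\| \le \|D\Phi\|_{\mathrm{op}}\,\|x-y\| \le 2\rho\|D\Phi\|_{\mathrm{op}}.
\end{equation*}

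**Main estimate.** Now let $M = \max\{\|\Phi(x)\|: x \in B(a,\rho)\}$ and $\mu = \min\{\|\Phi(x)\|: x \in B(a,\rho)\}$, both attained by compactness; note $\mu = \|\Phi(a_k)\|$ by definition of $a_k$. Applying the oscillation bound with $x$ a maximizer and $y = a_k$ gives $M - \mu \le 2\rho\|D\Phi\|_{\mathrm{op}}$. By the technical condition (\ref{boundderivative1}), $\rho\|D\Phi\|_{\mathrm{op}} \le \mu$, so $M - \mu \le 2\mu$, i.e. $M \le 3\mu$, which is precisely the claimed inequality. Since this argument uses only the affine structure of $\Phi_k$ and the hypothesis (\ref{boundderivative1}), it holds for every $k$, completing the proof.

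**On difficulty.** There is essentially no obstacle here: the only point requiring a moment's care is that the maximum and minimum of $\|\Phi_k(\cdot)\|$ on the closed ball $B(a,\rho)$ are genuinely attained (continuity on a compact set) so that the quantities $M$ and $\mu$ are well-defined, and that the Lipschitz constant of the affine map $\Phi_k$ equals $\|D\Phi_k\|_{\mathrm{op}}$ with the diameter of $B(a,\rho)$ being $2\rho$ (hence the factor $3 = 1 + 2$ rather than something smaller). If one wanted a cleaner constant one could instead compare a maximizer to a minimizer directly and get $M \le \mu + 2\rho\|D\Phi_k\|_{\mathrm{op}} \le 3\mu$; the stated constant $3$ is already what this yields, so no sharpening is needed for the intended application in the proof of Theorem \ref{teotangent}.
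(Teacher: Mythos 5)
Your proof is correct and follows essentially the same route as the paper: both compare an arbitrary (or maximizing) point with the minimizer $a_k$, bound the oscillation by the Lipschitz constant $\|D\Phi_k\|_{\mathrm{op}}$ times the diameter $2\rho$ of the ball, and invoke (\ref{boundderivative1}) to get the factor $3$. Nothing further is needed.
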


\begin{proof}
This is shown at the end of the proof of \cite[Lemma 3.11]{FHOR}.  Pick $y\in B(a,\rho)$ realizing the minimum and let $x\in B(a,\rho)$. Then 
\begin{equation*}
\|\Phi_k(x)-\Phi_k(y)\|\le\|D\Phi_k\|_{\mathrm{op}}\:\|x-y\|\le 2\min\bigl\{\|\Phi_{k}(x)\|: x\in B(a,\rho)\bigr\}
\end{equation*} 
and the Lemma follows easily.
\end{proof}

\begin{lemma}\label{lemmaopposite} If $\omega$ is an overlapping direction for $\mathcal S$, then so is $-\omega$.
\end{lemma}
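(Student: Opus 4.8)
The plan is to exploit the symmetry $S_\alpha^{-1}S_\beta\leftrightarrow S_\beta^{-1}S_\alpha$. If $\omega$ is obtained from sequences $\alpha_k,\beta_k$ with $\Phi_k=S_{\alpha_k}^{-1}S_{\beta_k}-I$ (and point $a$, radius $\rho$), I claim $-\omega$ is witnessed by the swapped sequences $\tilde\alpha_k=\beta_k$, $\tilde\beta_k=\alpha_k$, whose associated map is $\tilde\Phi_k=S_{\beta_k}^{-1}S_{\alpha_k}-I=g_k^{-1}-I=:\Psi_k$, where $g_k:=S_{\alpha_k}^{-1}S_{\beta_k}$. Write $g_k$ as the affine map $g_k(x)=(I+D\Phi_k)x+\Phi_k(\mathbf 0)$; since $\|\Phi_k\|_\infty\to0$ forces $\|D\Phi_k\|_{\mathrm{op}}\to0$ and $\|\Phi_k(\mathbf 0)\|\to0$, the matrix $I+D\Phi_k$ is invertible for all large $k$. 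A direct computation with $g_k^{-1}(y)=(I+D\Phi_k)^{-1}(y-\Phi_k(\mathbf 0))$ and the identity $(I+D\Phi_k)^{-1}-I=-(I+D\Phi_k)^{-1}D\Phi_k$ then gives
\[
\Psi_k(y)=-(I+D\Phi_k)^{-1}\Phi_k(y)\qquad\text{for all }y\in\rr^d .
\]

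From this identity the easy parts follow. First, $\|\Psi_k\|_\infty\le\|(I+D\Phi_k)^{-1}\|_{\mathrm{op}}\,\|\Phi_k\|_\infty\to0$, and $\|\Psi_k\|_\infty>0$ because $\Psi_k\equiv0$ would force $g_k=I$, hence $\Phi_k\equiv0$, contradicting $\|\Phi_k\|_\infty>0$. Second, the identity yields, for every $x$, the comparisons $\frac{\|\Phi_k(x)\|}{1+\|D\Phi_k\|_{\mathrm{op}}}\le\|\Psi_k(x)\|\le\frac{\|\Phi_k(x)\|}{1-\|D\Phi_k\|_{\mathrm{op}}}$, together with $\|D\Psi_k\|_{\mathrm{op}}\le\frac{\|D\Phi_k\|_{\mathrm{op}}}{1-\|D\Phi_k\|_{\mathrm{op}}}$. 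Third, taking $y=a$ and passing to a subsequence along which $\Phi_k(a)/\|\Phi_k(a)\|\to\omega$ (note $\Phi_k(a)\ne\mathbf 0$, else $\Phi_k$ is the zero map), the convergence $(I+D\Phi_k)^{-1}\to I$ gives $\Psi_k(a)/\|\Psi_k(a)\|\to-\omega$.

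It remains to verify the technical condition \eqref{boundderivative1} for the sequence $\Psi_k$, which is the only slightly delicate point; I will use the \emph{same} point $a$ but the smaller radius $\tilde\rho:=\rho/2$. Let $m_k=\min_{x\in B(a,\rho)}\|\Phi_k(x)\|=\|\Phi_k(a_k)\|$, so $\rho\|D\Phi_k\|_{\mathrm{op}}\le m_k$ by hypothesis. Any minimizer $a_k^\Psi$ of $\|\Psi_k\|$ on $B(a,\tilde\rho)\subset B(a,\rho)$ satisfies, by the lower bound above, $\|\Psi_k(a_k^\Psi)\|\ge m_k/(1+\|D\Phi_k\|_{\mathrm{op}})\ge\rho\|D\Phi_k\|_{\mathrm{op}}/(1+\|D\Phi_k\|_{\mathrm{op}})$, while $\tilde\rho\|D\Psi_k\|_{\mathrm{op}}\le\tfrac{\rho}{2}\cdot\tfrac{\|D\Phi_k\|_{\mathrm{op}}}{1-\|D\Phi_k\|_{\mathrm{op}}}$. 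For $k$ large enough that $\|D\Phi_k\|_{\mathrm{op}}\le1/3$ one has $\tfrac{1}{2(1-\|D\Phi_k\|_{\mathrm{op}})}\le\tfrac{1}{1+\|D\Phi_k\|_{\mathrm{op}}}$, hence $\tilde\rho\|D\Psi_k\|_{\mathrm{op}}\le\|\Psi_k(a_k^\Psi)\|$. Discarding the finitely many initial indices (which does not remove $\omega$ as a limit point of $\Phi_k(a)/\|\Phi_k(a)\|$ along the chosen subsequence), the data $\tilde\alpha_k,\tilde\beta_k,a,\tilde\rho$ satisfy all the requirements in the definition of overlapping direction, so $-\omega$ is an overlapping direction for $\mathcal S$. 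The main obstacle is precisely this last step: one must shrink the radius and keep track of the constants so that the technical condition is preserved under $\Phi_k\mapsto\Psi_k$; everything else is formal manipulation of the displayed identity.
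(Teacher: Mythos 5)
Your proposal is correct and follows essentially the same route as the paper: you swap the words $\alpha_k\leftrightarrow\beta_k$ and use the identity $\Psi_k=-(I+D\Phi_k)^{-1}\Phi_k$, which is exactly the paper's relation $\Phi_{\beta_k,\alpha_k}=-\tfrac{c_{\alpha_k}}{c_{\beta_k}}O_{\beta_k}^{-1}O_{\alpha_k}\Phi_{\alpha_k,\beta_k}$ written in terms of $D\Phi_k$ rather than the explicit similarity data. The only difference is that you carry out in detail the verification that the technical condition (\ref{boundderivative1}) survives with the halved radius, a point the paper merely asserts (``holds for some slightly smaller value than $\rho$''), and your quantitative check of that step is sound.
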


\begin{proof} 
Consider a pair of sequences $\{\alpha_k\}$ and $\{\beta_k\}$ in $\mathcal I^\ast$ such that 
\begin{equation*}
0<\|\Phi_{\alpha_k, \beta_k}\|_{{\infty}}\to0 \ \ \textrm{as \ } k\to\infty,
\end{equation*}
where $\Phi_{\alpha_k, \beta_k}=S_{\alpha_k}^{-1}\circ S_{\beta_k}-I$.
Then, it is easily seen that $c_{\beta_k}c_{\alpha_k}^{-1}\to1$ and $O_{\alpha_k}^{-1}O_{\beta_k}\to I$, since the convergence is uniform on bounded sets.
Also note that for any $x\in\rr^d$,
\begin{align*}\Phi_{\beta_k, \alpha_k}(x)&= c_{\beta_k}^{-1}O_{\beta_k}^{-1}(S_{\alpha_k}x-S_{\beta_k}x)\\ 
&  =-\frac{c_{\alpha_k}}{c_{\beta_k}}O_{\beta_k}^{-1}O_{\alpha_k}c_{\alpha_k}^{-1}O_{\alpha_k}^{-1}(S_{\beta_k}x-S_{\alpha_k}x) \\ &= -\frac{c_{\alpha_k}}{c_{\beta_k}}O_{\beta_k}^{-1}O_{\alpha_k}(\Phi_{\alpha_k, \beta_k}(x)).
\end{align*}
 Then $\|\Phi_{\alpha_k, \beta_k}\|_{{\infty}}/\|\Phi_{\beta_k, \alpha_k}\|_{{\infty}}\to1$, and consequently $0<\|\Phi_{\beta_k, \alpha_k}\|_{{\infty}}\to0$. Moreover, if $$\Phi_{\alpha_k, \beta_k}(a)/\|\Phi_{\alpha_k, \beta_k}(a)\|\to\omega$$ and condition (\ref{boundderivative1}) holds for some $\rho>0$, then $\Phi_{\beta_k, \alpha_k}(a)/\|\Phi_{\beta_k, \alpha_k}(a)\|\to-\omega$, and (\ref{boundderivative1}) holds for some slightly smaller value than $\rho$. 
\end{proof}

\subsubsection*{{\bf Preliminary definitions and results from \cite{FHOR}.}}

The proof of Theorem \ref{teotangent} uses techniques introduced for the proof  of Theorem 3.2 in \cite{FHOR}, so in the rest of this section we borrow notation and refer to some results given there. Also, for simplicity, we assume here $s=2$. 

We will define, inductively, maps $g_j$ and $h_j$ that are essential for the definition of the weak tangent.
 
For notational convenience, let $\omega^0$  and $\omega^2$ be the two chosen overlapping directions. For $i=0,2$, let $\alpha_k^{(i)}, \beta_k^{(i)}\in\mathcal I^\ast$ be the words corresponding to the definition of $\omega^i$ and, denoting by $\Phi_{i, k}=\Phi_{\alpha_k^{(i)},\beta_k^{(i)}}$ the corresponding affine map, inequality  (\ref{boundderivative1}) holds
with $a^i\in F$. Obviously, the same $\rho>0$ can be chosen for both sequences. 
Also for ease of notation, we assume that $a^i$ is a fixed point of the similarities, that is,  there is $\gamma^{(i)}\in\mathcal I^\ast$ such that $S_\gamma^{(i)}(a^i)=a^i$. See the Remark at the end of this section in case $a^i$ is not a fixed point. 

Define
\begin{equation*}
\omega^1:=-\omega^0, \ a^1:=a^0, \ \alpha^{(1)}_k:=\beta^{(0)}_k, \ \beta^{(1)}_k:=\alpha^{(0)}_k, \  \gamma^{(1)}:=\gamma^{(0)}, \  \Phi_{1,k}:=\Phi_{\beta_k, \alpha_k},
\end{equation*}
and, for $0\le i\le2$, let 
\begin{align*}
\delta_k^{(i)}&=\min\bigl\{\|\Phi_{i, k}(x)\| : x\in B(a^{i},\rho)\bigr\} \notag 
\end{align*}
so that, by Lemma \ref{lemma:ultimo} and Lemma \ref{lemmaopposite},
\begin{equation}\label{deltaineq}
\max\bigl\{\|\Phi_{i, k}(x)\| : x\in B(a^{i},\rho)\bigr\}\le3\delta_k^{(i)}.
\end{equation}

Fix $n$ large and let $\epsilon=1/n^{s+2}$ and $\eta=\epsilon^2$.
By taking further subsequences, we assume that $\Phi_k(a^i)/\|\Phi_k(a^i)\|\in B(\omega^i,{\eta/2})$ for all $k$ and $i$. We consider values of $n$ large enough so that $B(\omega^i, {\eta/2})\cap B(\omega^{j}, {\eta/2})=\emptyset$ whenever $i\neq j$.

Define $f_0=f_1=S_{\gamma^{(0)}}$ and $f_2=S_{\gamma^{(2)}}$ and pick $M>0$ so that 
\begin{equation}\label{M}
f_i^m(x)\in B(a^i,{\rho'}) \ \ \ \textrm{ for any }x\in F \textrm{ and } m\ge M,
\end{equation}
where $\rho'=\rho\eta/5$.

Denote by $O(d)$ the group of orthogonal $d\times d$ matrices with entries in $\rr$ and let $G=\overline{\{O_\gamma: \gamma\in\mathcal I^\ast\}}$, which is a compact subgroup of $O(d)$; see Lemma 3.9 in \cite{FHOR}. Then, for $\epsilon_2=\epsilon^2/4$, there is a finite subset $\mathcal J\subset\mathcal I^\ast$ such that for any $U\in G$ there is an $\alpha\in\mathcal J$ such that $\|U-O_\alpha\|_{\mathrm{op}}<\epsilon_2$. Define 
\begin{equation*}
c_\ast=\min\{c_\alpha: \alpha\in \mathcal J\}.
\end{equation*} 
Note that $c_\ast$ depends on $n$ if the group $G$ is not finite.        

Now we define the maps $g_j$ and $h_j$. Let $g_0=h_0=I$, $d_0=1$, $A_0=I$. Inductively we define natural numbers $k_j$, $m_j$ and maps $g_j$ and $h_j$. For these definitions we need to introduce an additional sequence of integers  $\{p_l\}$, that will be specified later, and is strictly increasing with $p_0=0$.  Assume we have defined $g_{j-1}$ and $h_{j-1}$, and let $d_{j-1}$ be the ratio of $g_{j-1}$ and $A_{j-1}$ be the orthogonal matrix given by $Dg_{j-1}=d_{j-1}A_{j-1}$. Suppose $l$ is such that $j\in (p_l, p_{l+1}]$, and let $i\in\{0,1,2\}$ be such that $l\equiv i\mod 3$. Then, we choose $k_j$ and $m_j\ge M$ so that
\begin{equation}\label{fitpaso}
d_{j-1}c_\ast^{-1}c_{\gamma^{(i)}}^{-m_j}\delta_{k_j}^{(i)}<\epsilon\le d_{j-1}c_\ast^{-1}c_{\gamma^{(i)}}^{-m_j-1}\delta_{k_j}^{(i)};
\end{equation}
one can first choose $k_j$ large enough so that $d_{j-1}c_\ast^{-1}\delta_{k_j}^{(i)}<\epsilon c_{\gamma^{(i)}}^{M}$ and then choose $m_j$ appropriately. Also, 
pick $\nu_j\in\mathcal J$ such that 
\begin{equation*}
\|A_{j-1}O_{\nu_j}^{-1}O_{\gamma^{(i)}}^{-m_j}-I\|_{\mathrm{op}}<\epsilon_2.
\end{equation*}
Then, define the functions 
\begin{equation}\label{gjhj}
g_j=g_{j-1}\circ S_{\nu_j}^{-1}\circ f_i^{-m_j}\circ S_{\alpha_{k_j}^{(i)}}^{-1} \ \ \ \textrm{and} \ \ \ h_j=S_{\beta_{k_j}^{(i)}}\circ f_i^{m_j}\circ S_{\nu_j}\circ h_{j-1}.
\end{equation}

As shown in \cite{FHOR}, for any $j\ge1$ and $x\in F$ we have
\begin{equation}\label{esta}
g_j\circ h_j(x)-g_{j-1}\circ h_{j-1}(x)=d_{j-1}c_{\nu_j}^{-1}c_{\gamma^{(i)}}^{-m_j}A_{j-1}O_{\nu_j}^{-1}O_{\gamma^{(i)}}^{-m_j}	\Phi_{i,k_j}\bigl(f_i^{m_j}\circ S_{\nu_j}\circ h_{j-1}(x)\bigr).
\end{equation}
Identity (\ref{esta}) has two important consequences. The first is, by (\ref{deltaineq}) and  (\ref{fitpaso}), that
\begin{equation}\label{increase}
c_\ast c_{\gamma^{(i)}}\epsilon\le \bigl\|g_j\circ h_j(x)-g_{j-1}\circ h_{j-1}(x)\bigr\|\le 3\epsilon,
\end{equation}
while the second is, using (\ref{boundderivative}),  that if $j\in(p_l,p_{l+1}]$ and $l\equiv i\mod 3$, then
\begin{equation}\label{cone}
g_j\circ h_j(x)-g_{j-1}\circ h_{j-1}(x)\in\mathcal C\bigl( B(\omega^i, \eta)\bigr),
\end{equation}	
where, given $U\subset \rr^d$, the set $\mathcal C(U)=\{\lambda u: \lambda>0, u\in U\}$ is the cone generated by $U$. For $y\in\rr^d$, let $\mathcal C_y(U)=\mathcal{C}(U)+y$, that is the translation of the cone generated by $U$ with vertex $y$. With this notation, and letting $y_j=g_j\circ h_j(x)$, then (\ref{cone}) says that $y_j\in \mathcal C_{y_{j-1}}(B(\omega^i,\eta))$.

We will also use the fact that, for a ball $U$ not containing the origin, then 
\begin{equation}\label{propertycone}
x\in\mathcal C_y(U), \ \textrm{implies} \ \mathcal C_x(U)\subset\mathcal C_y(U).
\end{equation}

\begin{rem} The constructions given below rely on (\ref{increase}) and (\ref{cone}), and are made, for notational convenience, assuming that the points $a^i$ are fixed points of compositions of the similarities from the IFS. In general, if $a^i$ is not a fixed point,  
there is an infinite word $\gamma$ such that $\lim_{n\to\infty} S_{\gamma|_n}(0)=a^i$,  where  $\gamma|_n\in\mathcal I^n$ is the word given by the first $n$ terms of $\gamma$. We define $M$ as in (\ref{M}) but requiring that  $S_{\gamma|_m}(F)\subset B(a^i,{\rho'})$ for any $m\ge M$. Inequalities (\ref{fitpaso}) and the definition of the functions in (\ref{gjhj}) are changed accordingly. Then, only the lower bound in (\ref{increase}) is modified, becoming $c_\ast c_{\min}\epsilon$; here $c_{\min}$ is the minimum of the contractions ratios form the IFS. Also, a quick inspection in \cite[proof of Theorem 3.2, page 207]{FHOR} shows that  property (\ref{cone}) still holds. Although these changes may modify slightly the sets constructed below, the fundamental property (\ref{b}) still holds.
\end{rem}

\section{Proof of Theorem \ref{teotangent}.}\label{section:proofteotangent}

\subsection*{Sketch of proof of Theorem \ref{teotangent}}

The summary of the proof is as follows. Let $\omega^1,\ldots, \omega^{s}$ be $s$ linearly independent overlapping directions for $\mathcal S$ that generate the $s$-parallelotope $W$; here $1\le s\le\dim V_{\mathcal S}$. Given $n\in\nn$ and $x_0\in F$, we construct a set $W_n=W_n(x_0)$ which is image of points in $F$ through a similarity $T_n$, and such that $W_n\to_{d_{\mathcal H}} W+x_0$ as $n\to \infty$ and $W_n\subset X:=[W]_1$. Next we show that essentially the same construction gives, for each $x\in F$, a set $W_n(x)\subset T_k(F)\cap X$ such that 
\begin{equation}\label{b}
W_n(x)\to_{d_{\mathcal H}} W+x \text{ \ as \ }n\to \infty.
\end{equation} 
Then, the theorem follows easily from the compactness of the space of non-empty compact subsets of $X$ with respect to the Hausdorff metric.
\vspace{.2cm}

Now we start with the construction of the sets.

\subsubsection*{{\bf Construction of $\mathbf{W_n}$.}}  We make first the construction for the case of two independent overlapping directions; $s=2$.
Then we indicate how to proceed in the general case. The set is constructed inductively using points of the form $g_j\circ h_j(x_0)$ for a fixed $x_0\in F$. The way  we choose to do it, although far from optimal in the sense of the number of required iterations, seems appropriate for the general case $s>2$.  

The construction is made in $n+1$ steps defining a `comb-like path'; see Figure \ref{fig:peine}. Each step specifies a `comb tooth' and three terms of a (finite) sequence $\{p_l\}$.
Below we detail the first step, see Figure \ref{fig:peinepaso1}. This step already gives the construction of $W_n$ when $s=1$, in which case the parallelotope $W$ is a unit line segment.
  
\noindent {\bf Step 1.} Fix $x_0\in F$ and let $a_1=x_0$; this is the base point of the first comb tooth.
First we use (\ref{cone}) with $j=1$:
as $p_0=0$, we have $p_0<1\le p_1$ ($p_1$ to be specified below), so $i=0$. Then, the point $g_1\circ h_1(x_0)$ belongs to the cone $\mathcal{C}_{a_1}\bigr(B(\omega^0,\eta)\bigl)$, and also by (\ref{increase}), its orthogonal projection onto the axis of the cone, which has direction $\omega^0$, is at least $c_\ast c_{\gamma^{(0)}} \sqrt{1-\eta^2}\epsilon$ and at most $3\epsilon$ apart from $a_1$. 

Applying inductively (\ref{cone}) and  (\ref{propertycone}), we have 
$$g_j\circ h_j(x_0)\in \mathcal C_{a_1}\bigr(B(\omega^0,\eta)\bigl)$$ 
for any $1\le j\le p_1$, and the orthogonal projection onto the axis  increases with $j$ in the direction $\omega^0$, with bounds given by  
\begin{equation}\label{spread}
c_\ast c_{\gamma^{(0)}}\sqrt{1-\eta^2}\epsilon \le \omega^0\cdot(g_j\circ h_j(x_0)-g_{j-1}\circ h_{j-1}(x_0)) \le3\epsilon.
\end{equation}
Then, since the lower bound of this increase is independent of $j$, we define $p_1$ as the largest integer $j$ such that  
\begin{equation}\label{stretch1}
g_j\circ h_j(x_0)\in \mathcal C_{a_1}\bigr(B(\omega^0,\eta)\bigl)\cap B(a_1,1),
\end{equation} 
and also define $$\tilde W_{n,1}=\{g_j\circ h_j(x_0):  p_0\le j\le p_1\},$$
which corresponds to one comb tooth.

Let $L_1$ and $L_2$ be the sides of the parallelogram $W$ containing $\mathbf{0}$ and parallel to $\omega^0$ and $\omega^2$, respectively. By (\ref{stretch1}) and the upper bound in (\ref{spread}), together with the fact that the width of the cone is $\eta$, we get
\begin{equation*}
d_{\mathcal H}\big(\tilde W_{n,1}, L_1+a_1\big)\le3\epsilon+\eta.
\end{equation*}

Next we move back to a point $\tilde a_1$ close to $a_1$ proceeding as follows. Put $b=g_{p_1}\circ h_{p_1}(x_0)$. We use (\ref{cone}) for  $j\in(p_1, p_2]$ ($p_2$ defined below), so $i=1$ and the involved direction is $\omega^1=-\omega^0$: the point $g_j\circ h_j(x_0)$ lie in the cone $\mathcal C_{b}\bigr(B(\omega^1,\eta)\bigl)$ and its projection onto the axis of this cone increases with $j$ in the direction $\omega^2$, with the same bounds as in (\ref{spread}). Define $p_2$ as the largest $j> p_1$ satisfying the same condition as in (\ref{stretch1}) but replacing $a_1$ and $\omega^0$ by $b$ and $\omega^1$, respectively. 
Put $\tilde a_1=g_{p_2}\circ h_{p_2}(x_0)$ and note that, since we are coming back in the direction $\omega^1$, we have by (\ref{increase}) that 
\begin{equation}\label{equ}
\|\tilde a_1 -a_1\|\le 3\epsilon+2\eta.
\end{equation}

Then, we continue moving slightly in the direction $\omega^2$ to construct the base point, $a_2$, of the second comb tooth: now $j\in (p_2,  p_3]$ ($p_3$ defined below), so $i=2$ and the corresponding points lie inside $\mathcal C_{\tilde a_1}\bigr(B(\omega^2,\eta)\bigl)$. We define $p_3$ by picking the largest $j>p_2$ satisfying
\begin{equation*}\label{stretch2}
g_j\circ h_j(x_0)\in \mathcal C_{\tilde a_1}\bigr(B(\omega^2,\eta)\bigl)\cap B\bigl(\tilde a_1, 1/n\bigr).
\end{equation*}
We conclude the first step defining $a_2:=g_{p_3}\circ h_{p_3}(x_0)$. 
Note that here we moved from $\tilde a_1$ about $1/n$ in the direction $\omega^2$  so that the distance between the comb teeth is about $1/n$.
    
\begin{figure}\label{peine}
    \centering
    \subfigure[The comb-like shape of $W_n$.]
    {
        \includegraphics[width=3in]{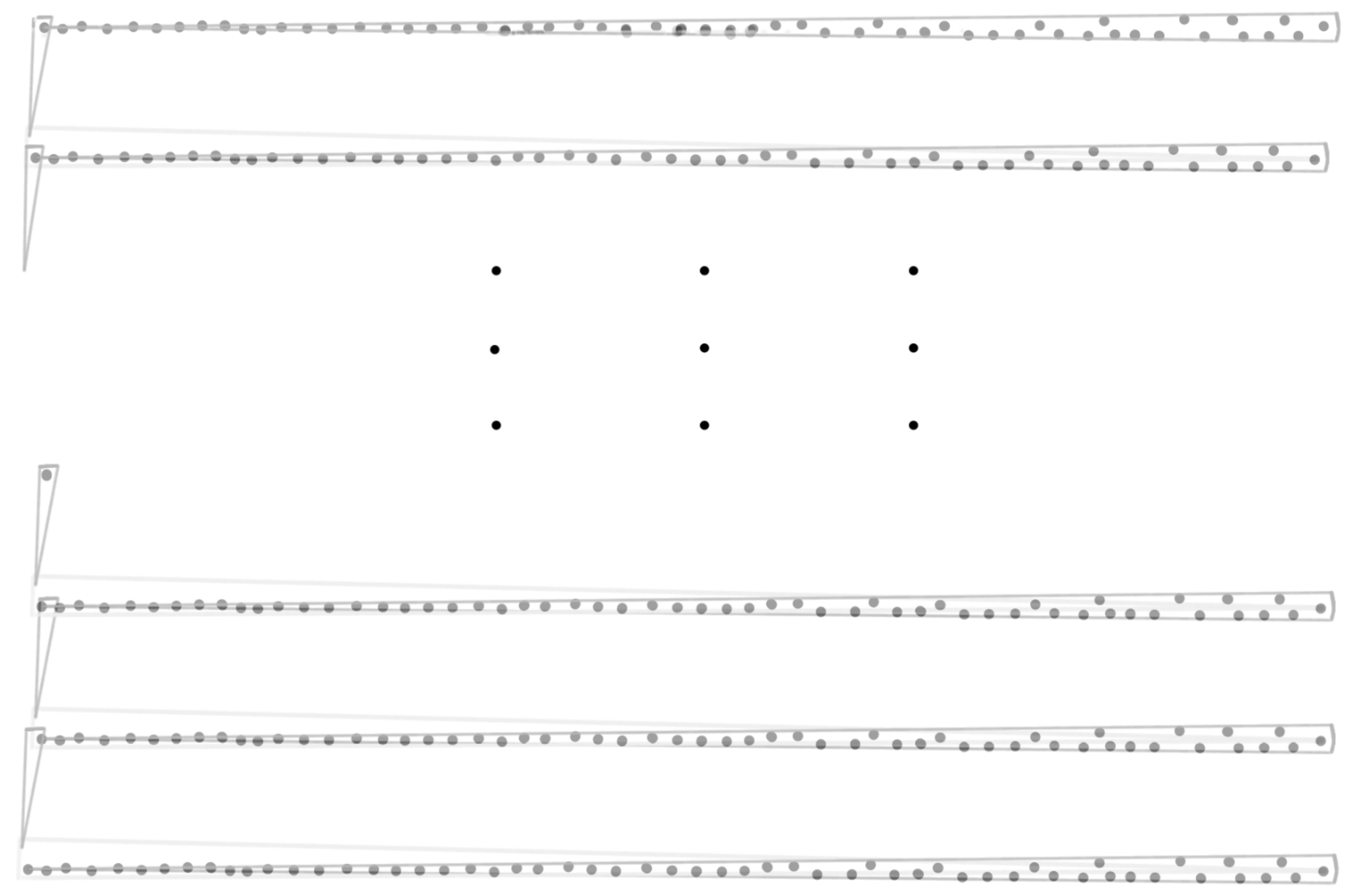}
        \label{fig:peine}
    }   
\\    
    \subfigure[The points defined at Step 1. Here $\mathcal C_{a_1}=\mathcal C_{a_1}\bigl(B(\omega^0,\eta)\bigr)\cap B(a_1,1)$, $\mathcal C_{b}=\mathcal C_{b}\bigl(B(\omega^1,\eta)\bigr)\cap B(b,1)$ and $\mathcal C_{\tilde a_1}=\mathcal C_{\tilde a_1}\bigl(B(\omega^2,\eta)\bigr)\cap B(\tilde a_1,1/n)$.]
    {
    \includegraphics[width=4in]{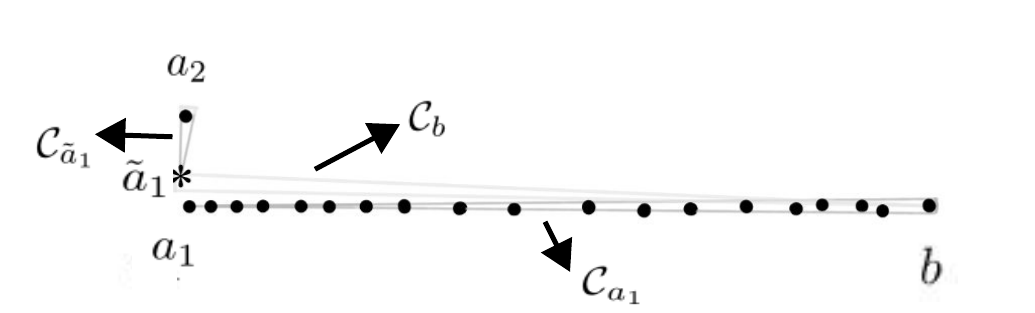}
        \label{fig:peinepaso1}
    }
  \caption{The set $W_n$.} 
\end{figure}

\noindent{\bf Remaining steps.}  At step $K$, the path begins at the point $a_{K}=g_{p_{3(K-1)}}\circ h_{p_{3(K-1)}}(x_0)$ defined at the end of the previous step.  
For $1< K \le n$, we follow the same pattern as before to define $p_{3K-2}, p_{3K-1}, p_{3K}$ and the corresponding set 
$$\tilde W_{n,K}=\{g_j\circ h_j(x_0):p_{3(K-1)}\le j\le p_{3K-2}\},$$
so that 
\begin{equation}\label{distan}
d_{\mathcal H}\big(\tilde W_{n,K}, L_1+a_K\big)\le 3\epsilon+\eta.
\end{equation}

In the last step, $n+1$, the construction ends after defining $\tilde W_{n,n+1}$, so we do not go back to a point close to $a_{n+1}$. 

We define $$W_n=\bigcup_{K=1}^{n+1}\tilde W_{n,K}$$ and next show that, for $n\ge 14$,
\begin{equation}\label{haudist}
d_{\mathcal H}\big(W_n, W+x_0\big)\le \frac{1}{n}.
\end{equation} 

Denote by $q_1=x_0, \ldots, q_{n+1}$ the points in the side $L_2+x_0$ of $W+x_0$ that are equidistributed at distance $1/n$, starting from $x_0$. By (\ref{equ}), $$\|a_2-q_2\|\le 3\epsilon +2\eta+\frac{\eta}{n}+3\epsilon$$ 
and inductively, for any $K=1, \ldots, n+1$,
$$\|a_K-q_K\|\le (K-1)(6\epsilon+(2+\frac{1}{n})\eta).$$
Then, by (\ref{distan}), 
\begin{equation}\label{7e}
d_{\mathcal H}\bigl(\tilde W_{n,K},L_1+q_K\bigr)\le 3\epsilon+\eta+n(6\epsilon+(2+\frac{1}{n})\eta)\le 7n\epsilon.
\end{equation}
Also,   
\begin{equation*}\label{triangular}
d_{\mathcal H}\Bigl(\bigcup_{K=1}^{n+1}L_1+q_K, W+x_0\Bigr)\le \frac{1}{2n},
\end{equation*}    
and consequently, by the  triangle inequality and since taking union over $K$ preserves the bound on the distance in (\ref{7e}), 
\begin{align*}d_{\mathcal H}\bigl(W_n,W+x_0\bigr)&\le 
d_{\mathcal H}\Bigl(\bigcup_{K=1}^{n+1}\tilde W_{n,K},\bigcup_{K=1}^{n+1} L_1+q_K\Bigr) +d_{\mathcal H}\Bigl(\bigcup_{K=1}^{n+1} L_1+q_K,W+x_0\Bigr)\\
&\le 7n\epsilon+\frac{1}{2n}\le \frac{1}{n}
\end{align*}
and (\ref{haudist}) follows.

Observe also that, by (\ref{7e}) and since $L_1+q_K\subset W+x_0$, we also get
$W_n\subset [W+x_0]_{7n\epsilon},$
which says that $W_n$ is a flat set. 

\subsubsection*{{\bf Construction of $\mathbf{W_n}$ for $\mathbf{s>2}$.}} As before, we construct inductively a path of points of the form $g_j\circ h_j(x_0)$ that approximates $W+x_0$. The construction is made in $n+1$ steps.  

Now denote by $W^{s-1}$ the $(s-1)$-parallelotope generated by $\omega^1, \ldots, \omega^{s-1}$. For $1\le l<s$, define $b^{l}=x_0+\omega^{{1}}+\ldots+\omega^{l}$,  that are vertices of $W^{s-1}+x_0$.

Assume we have constructed inductively a `flat' subset 
\begin{equation}\label{flatt}
\tilde W_{n}\subset \big[W^{s-1}+x_0\big]_{(7n)^{s-1}\epsilon}
\end{equation} 
of points of the form $g_j\circ h_j(x_0)$ satisfying 
\begin{equation}\label{distm-1}
d_{\mathcal H}\big(\tilde W_n, W^{s-1}+x_0\big)\le \frac{s-1}{n},
\end{equation}
and such that its last point, say $b=g_P\circ h_P(x_0)$, verifies $$\|b-b^{s-1}\|\le(7n)^{s-1}\epsilon.$$

In the first step of the construction of $W_n$,  we have the set $\tilde W_{n,1}:=\tilde W_n$ and start going back from $b$, through points close to $b^{s-2}, \ldots, b^2$ and $b^1$, to reach a point $\tilde a_1$ close to $a_1:=x_0$. 
This is done as follows. For $j>P$, define $g_j$ and $h_j$ as in (\ref{gjhj}) but choosing the sequences $\alpha_k, \beta_k\in\mathcal I^\ast$ so that (\ref{increase}) and (\ref{cone}) hold for the direction $-\omega^{s-1}$ for $j>P$. 
Then, $g_j\circ h_j(x_0)\in\mathcal C_b\bigl(B(-\omega^{s-1},\eta)\bigr)$ and we pick the largest $j_1>P$ such that 
$$g_{j_1}\circ h_{j_1}(x_0)\in C_b\bigl(B(-\omega^{s-1},\eta)\bigr)\cap B(b,1).$$  
Then, by (\ref{flatt}), 
$$\|g_{j_1}\circ h_{j_1}(x_0)-b^{s-2}\|\le (7n)^{s-1}\epsilon+3\epsilon+\eta.$$ 
Repeating the above procedure, but replacing $b$ by $g_{j_1}\circ h_{j_1}(x_0)$ and $-\omega^{s-1}$ by $-\omega^{s-2}$, we obtain the point $g_{j_2}\circ h_{j_2}(x_0)$ that is close to $b^{s-3}$. Continuing in this way, for $J=j_{s-1}$,  we obtain the point $\tilde a_1=g_J\circ h_J(x_0)$ satisfying
$$\|\tilde a_1-a_1\|\le (7n)^{s-1}\epsilon+(s-1)(3\epsilon+\eta).$$

Next, from $\tilde a_1$, we move in the direction $\omega^{s}$, defining accordingly $g_j$ and $h_j$, for $j>J$. Let $J'$ be the largest integer such that $g_j\circ h_j(x_0)\in C_{\tilde a_1}\bigl(B(\omega^{s},\eta)\bigr)\cap B(\tilde a_1, 1/K)$ and put $a_2=g_{J'}\circ h_{J'}(x_0)$. Finally, from $a_2$  we continue with the inductive definition of $g_j$ and $h_j$, $j>J'$, repeating a construction like the one for $\tilde W_n$ to obtain $\tilde W_{n,2}$. In this way, we obtain the sets $\tilde W_{n,K}$,
so that 
\begin{equation}\label{flattt}
\tilde W_{n,K}\subset \big[W^{s-1}+a_K\big]_{(7n)^{s-1}\epsilon}
\end{equation}
and
\begin{equation}\label{ecu}
d_{\mathcal H}\big(\tilde W_{n,K}, W^{s-1}+a_K\big)\le \frac{s-1}{n}
\end{equation}
We define
$$W_n=\bigcup_{K=1}^{n+1}\tilde W_{n, K}.$$

As before, we show $W_n\to_{d_{\mathcal H}} W+x_0$, where $W=W^s$ is the $s$-parallelotope generated by $\omega^1, \ldots, \omega^s$.  Consider the uniformly distributed points $q_1=x_0, \ldots, q_{n+1}$ at distance $1/n$, lying in the side of $W+x_0$ that contains $x_0$ and is parallel to $\omega^{s}$. From the inequality
$$\| a_K-q_K\|\le (K-1)\bigl((7n)^{s-1}\epsilon+(s-1)(3\epsilon+\eta)+\eta/n+3\epsilon\bigr)\le n\bigl((7n)^{s-1}+4s\bigr)\epsilon$$
we obtain, by (\ref{flattt}) and $n$ sufficiently large, 
$$\tilde W_{n,K}\subset [W^{s-1}+q_K]_{(7n)^{s-1}\epsilon+n\bigl((7n)^{s-1}+4s\bigr)\epsilon}\subset [W^{s-1}+q_K]_{(7n)^{s}\epsilon},$$
which gives 
$W_n\subset \big[W+x_0\big]_{(7n)^{s}\epsilon},$ since $W^{s-1}+q_K\subset W+x_0$. So $W_n$ is a flat set. Moreover, by (\ref{ecu}) and triangle inequality,
$$d_{\mathcal H}\big(\tilde W_{n,K}, W^{s-1}+q_K\big)\le\frac{s-1}{n}+(7n)^{s}\epsilon.$$
As  $$d_{\mathcal H}\Bigl(\bigcup_{K=1}^{n+1}W^{s-1}+q_K, W+x_0\Bigr)\le\frac{1}{2n},$$ another application of triangle inequality gives
\begin{equation}\label{cota}
d_{\mathcal H}\big(W_n, W+x_0\big)\le \frac{s-1}{n}+(7n)^{s}\epsilon+\frac{1}{2n}< \frac{s}{n},
\end{equation}
the last inequality holds for all sufficiently large $n$. Then we have $W_n\to_{d_{\mathcal H}} W+x_0$. 
 
Finally, we show that $W_n$ is image of points in $F$ through a similarity, that is, 
\begin{equation}\label{contention}
W_n\subset T_n(F),
\end{equation}  
for some similarity $T_n$. The proof is the same as the given in \cite{FHOR} but we reproduce it here for completeness. We show inductively that for any $x\in F$ and $l\in\mathbb{N}$, 
\begin{equation}\label{inductive}
\{g_j\circ h_j(x):j=0\ldots, l\}\subset\{g_{l}\circ S_{\beta}(x):\beta\in \mathcal I^\ast\},
\end{equation}
therefore (\ref{contention}) holds with  $x=x_0$ and $T_n:=g_p$, where $p$ is the largest $j$ with $g_j\circ h_j(x_0)\in W_n$. The case $l=1$ holds since $x=g_1\circ g_1^{-1}(x)$, where $g_1^{-1}=S_{\beta}$ for some $\beta\in\mathcal I^\ast$, and also  $h_1=S_{\beta'}$ with ${\beta'}\in\mathcal I^\ast$.  Assuming (\ref{inductive}) holds for $l-1$, then by definition (\ref{gjhj}), we get for $j=0,\ldots,l-1$,
$$g_j\circ  h_j=g_{l-1}\circ S_\beta=g_l\circ g_{l}^{-1}\circ g_{l-1}\circ S_\beta=g_l\circ S_{\beta''},$$
with $\beta''\in\mathcal I^\ast$.
Moreover, $h_l=S_{\beta{'''}}$ for some $\beta{'''}\in\mathcal I^{\ast}$, which completes the induction.

\subsubsection*{{\bf The sets $\mathbf{W_n(x)}$.}}
By construction, we write $W_n=\{g_j\circ h_j(x_0)\}_{j\in \mathcal T}$ for some finite set $\mathcal T$. Then, for any $x\in F$, define $W_n(x):=\{g_j\circ h_j(x)\}_{j\in \mathcal T}$.  Observe that $W_n(x)$ may not converge in the Hausdorff metric to $W+x$ since, although controlled by the bounds in (\ref{increase}), the size of the relative position between consecutive points, $$\|\Psi_j(x)\|=\|g_j\circ h_j(x)-g_{j-1}\circ h_{j-1}(x)\|,$$ for fixed $j$, may vary significantly with $x\in F$. However, a slight modification in the definition of the maps allows us to obtain 
\begin{equation}\label{equ1}
d_{\mathcal H}\big(W_n(x),W+x\big)\le \frac{s}{n}
\end{equation}
for any $x\in F$.
We proceed as follows. 

Let $0<\theta<\rho'$, where $\rho'$ as defined in Section \ref{section:prelims}. We modify definition (\ref{M}) by replacing $\rho'$ with $\theta$, that is, we pick $M$ such that $f_i^ m(x)\in B(a^i,{\theta})$ for all $x\in F$ and $m\ge M$.  Note that different values of $\theta$ may change the definition of the functions $g_j$ and $h_j$, and hence, the finite set $\mathcal T=\mathcal T_\theta$ may also change. However, the maximum of $\mathcal T_\theta$ is uniformly bounded in $0<\theta<\rho'$ (from the construction of $W_n$, this maximum can be estimated using the bounds (\ref{increase}) and property (\ref{cone}), which do not change with $\theta$). 
Let $$j^\ast=\max\{j: j\in \mathcal T_\theta, 0\le\theta\le\rho'\}.$$
Below we use this upper bound to choose an appropriate $\theta=\theta(\epsilon)$.

As before, choose $k_j$ and $m_j$ such that (\ref{fitpaso}) holds and put 
$\tilde x:=f_i^{m_j}\circ S_{\nu_j}\circ h_{j-1}(x).$ 
Then, recalling the identity (\ref{esta}),
\begin{equation*}
g_j\circ h_j(x)-g_{j-1}\circ h_{j-1}(x)=d_{j-1}c_{\nu_j}^{-1}c_{\gamma^{(i)}}^{-m_j}A_{j-1}O_{\nu_j}^{-1}O_{\gamma^{(i)}}^{-m_j}	\Phi_{i,k_j}(\tilde x),
\end{equation*}
and since $\rho\|D \Phi_{i, k_j}\|_{\mathrm{op}}\le 3\delta_{k_j}^{(i)}$,  we get by the first inequality in  (\ref{fitpaso}) that 
\begin{align*}
\bigl\|\Psi_j(x)-\Psi_j(x_0)\bigr\|&=d_{j-1}c_{\nu_j}^{-1}c_{\gamma^{(i)}}^{-m_j}\bigl\|\Phi_{i,k_j}(\tilde{x})-\Phi_{i,k_j}(\tilde{x}_0)\bigr\|\le 3\rho^{-1}\epsilon\|\tilde x-\tilde x_0\|.
\end{align*}
By definition, $\tilde x,\tilde x_0\in B(a^i,{\theta})$ if $x, x_0\in F$, 
so we obtain for all $x\in F$ and all $j$ that 
$$\bigl\|\Psi_j(x)-\Psi_j(x_0)\bigr\|\le  6\rho^{-1}\theta\epsilon,$$
and now we choose $\theta<\rho'$ so that 
\begin{equation*}\label{jotastar}
\bigl\|\Psi_j(x)-\Psi_j(x_0)\bigr\|\le \frac{\epsilon}{j^{\ast}}.
\end{equation*}

Considering the construction of $W_n$ with $\theta$ as above, then for any $j\in \mathcal T_\theta$ and $x\in F$ we get
\begin{equation}\label{bound relative}
\bigl\|\bigl(g_j\circ h_j(x)-x\bigr)-\bigl(g_j\circ h_j(x_0)-x_0\bigr)\bigr\|\le \sum_{q=1}^{j}\big\|\Psi_q(x)-\Psi_q(x_0)\big\|<\epsilon,
\end{equation}
which immediately gives $$d_{\mathcal H}(W_n(x)+(x_0-x), W_n)\le\epsilon.$$ 
This observation finally leads to 
\begin{align*}
d_{\mathcal H}\big(W_n(x), W+x\big)&=d_{\mathcal H}\big(W_n(x)+(x_0-x), W+x_0\big) \\ 
&\le d_{\mathcal H}\big(W_n(x)+(x_0-x), W_n\big)+d_{\mathcal H}\big(W_n, W+x_0\big) \\
&\le\epsilon + \frac{s-1}{n}+(7n)^{s}\epsilon+\frac{1}{2n}\\
&\le\frac{s}{n},
\end{align*}
where the second last inequality is the first inequality in (\ref{cota}), and the last inequality holds for all sufficiently large $n$ since $\epsilon=1/n^{s+2}$.

\begin{proof}[\bf Conclusion of the proof.]
Define $W_n(F)=\bigcup_{x\in F} W_n(x)$, so by  (\ref{equ1}) we have
\begin{equation}\label{queseyo}
W_n(F)\to_{d_{\mathcal H}}W+F \ \ \ \textrm{as } n\to\infty.
\end{equation}
Moreover, by (\ref{inductive}) we have $W_n(F)\subset T_n(F)$. 
Now consider $X:=[W+F]_1$, so that for all $n$ large enough, $W_n(F)\subset T_n(F)\cap X$  by (\ref{inductive}) and (\ref{queseyo}). By the compactness of the space of non empty compact subsets of $X$ with respect to the Hausdorff metric, the sequence  $\{T_n(F)\cap X\}_n$ has at least one limit point $\hat F$. Then, $\hat F$ is a weak tangent of $F$, and by (\ref{queseyo}), it contains $W+F$. 
\end{proof}

\section{Structural properties of $V_{\mathcal S}$ and proof of the main result}\label{section:structure}	

Applying the techniques from the proof of Theorem \ref{teotangent} of the previous section, we obtain the following structural properties of the set of overlapping directions.

\begin{theorem}\label{teovec}
Any unit vector in $V_\mathcal S$ is an overlapping direction.
Moreover, $V_{\mathcal S}$ is invariant under the group $G=\overline{\{O_\gamma:\gamma\in \mathcal I^\ast\}}$, that is, $Ov\in V_{\mathcal S}$ for any $O\in G$ and $v\in V_{\mathcal S}$. 
\end{theorem}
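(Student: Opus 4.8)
\textbf{Proof plan for Theorem \ref{teovec}.}

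The plan is to exploit the comb-construction from the proof of Theorem \ref{teotangent} together with the linearity of the Assouad-dimension formula in Theorem \ref{paththeorem}, and then to feed the invariance back into the definition of overlapping direction. First I would prove the invariance statement $Ov\in V_{\mathcal S}$ for $O\in G$ and $v\in V_{\mathcal S}$. By definition of $G=\overline{\{O_\gamma:\gamma\in\mathcal I^\ast\}}$ and the fact that $V_{\mathcal S}$ is closed (it is a linear subspace), it suffices to treat $O=O_\gamma$ for a single word $\gamma\in\mathcal I^\ast$, and then iterate and pass to limits. Fix an overlapping direction $\omega$ realized by sequences $\alpha_k,\beta_k$ with $\Phi_k=S_{\alpha_k}^{-1}S_{\beta_k}-I$, satisfying (\ref{boundderivative1}) at some $a\in F$ with scale $\rho$. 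The key computation is that conjugating by $S_\gamma$ produces a new pair of words realizing the direction $O_\gamma\omega$: precisely, set $\alpha_k' = \gamma\alpha_k$, $\beta_k' = \gamma\beta_k$, so that $\Phi_k' := S_{\alpha_k'}^{-1}S_{\beta_k'}-I = S_{\alpha_k}^{-1}S_\gamma^{-1}S_\gamma S_{\beta_k}-I = \Phi_k$ — which is the wrong direction of conjugation. Instead one wants $\alpha_k' = \alpha_k\gamma$, $\beta_k' = \beta_k\gamma$: then $S_{\alpha_k'}^{-1}S_{\beta_k'} = S_\gamma^{-1}S_{\alpha_k}^{-1}S_{\beta_k}S_\gamma$, so $\Phi_k' = S_\gamma^{-1}(I+\Phi_k)S_\gamma - I = S_\gamma^{-1}\Phi_k S_\gamma$ (using that $S_\gamma$ is affine and $S_\gamma^{-1}S_\gamma=I$). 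Since $S_\gamma^{-1}$ has linear part $c_\gamma^{-1}O_\gamma^{-1}$, we get $\Phi_k'(x) = c_\gamma^{-1}O_\gamma^{-1}\Phi_k(S_\gamma x)$ up to the translation parts cancelling appropriately; evaluating at the point $S_\gamma^{-1}(a)$ (or more carefully at a point whose $S_\gamma$-image lands in $B(a,\rho)$), the normalized vectors $\Phi_k'(\cdot)/\|\Phi_k'(\cdot)\|$ converge to $O_\gamma^{-1}\omega$, and since $G$ is a group this gives every $O_\gamma^{\pm1}\omega$, hence every $O\omega$ for $O\in G$ in the closure. One must check that (\ref{boundderivative1}) is inherited — this is straightforward since $O_\gamma$ is an isometry and the derivative bound scales by the constant factor $c_\gamma^{-1}$ on both sides — possibly after shrinking $\rho$ to ensure $S_\gamma(B(a',\rho'))\subset B(a,\rho)$ for a suitable $a'\in F$.

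The slightly delicate point in the above is that the point $a$ in the definition is required to lie in $F$, and $S_\gamma^{-1}(a)$ need not be in $F$; however $F=\bigcup_i S_i F\supset S_\gamma F$, so one should instead start from the observation that $S_\gamma(F)\subset F$ and pick the new base point inside $S_\gamma(F)$, i.e. of the form $S_\gamma(b)$ with $b\in F$, with $\rho$ replaced by $c_\gamma\rho$ (or smaller). Then $\Phi_k'$ evaluated on $B(S_\gamma(b), c_\gamma\rho) = S_\gamma(B(b,\rho))$ satisfies the required inequalities because the left and right sides of (\ref{boundderivative1}) both pick up the same power of $c_\gamma$. The limit of the normalized sequence is then $O_\gamma^{-1}\omega$ up to sign (and $-\omega$ is already an overlapping direction by Lemma \ref{lemmaopposite}), giving $O_\gamma^{-1}v\in V_{\mathcal S}$ for each basis vector $v$ of $V_{\mathcal S}$, hence $O_\gamma^{-1}V_{\mathcal S}\subset V_{\mathcal S}$; since $G$ is a compact group, $O_\gamma^{-1}$ ranges over a dense subset of $G$ and $V_{\mathcal S}$ is closed, so $O V_{\mathcal S}=V_{\mathcal S}$ for all $O\in G$.

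For the first assertion, that every unit vector in $V_{\mathcal S}$ is an overlapping direction, I would argue by contradiction using the dimension formula. Suppose some unit $v\in V_{\mathcal S}$ is \emph{not} an overlapping direction. Let $\omega^1,\dots,\omega^s$ be a maximal linearly independent set of overlapping directions; by Lemma \ref{lemmaopposite} and the construction in Theorem \ref{teotangent}, the $s$-parallelotope they span gives $W+F$ inside a weak tangent of $F$, whence by (\ref{boundtangent}) and the product structure of $W+F$ (a translate of $W$ Cartesian-producted against a projection of $F$) one gets $\dim_A F \ge s + \dim_A \pi_{V^\perp}(F)$ where $V=\mathrm{span}(\omega^1,\dots,\omega^s)$. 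Combined with the upper bound half of Theorem \ref{paththeorem} (which says $\dim_A F = \dim V_{\mathcal S} + \dim_A\pi_{V_{\mathcal S}^\perp}(F)$ when one takes $V=V_{\mathcal S}$, but $s\le\dim V_{\mathcal S}$), the cleanest route is: if $\dim V_{\mathcal S}=s$ then the overlapping directions $\omega^1,\dots,\omega^s$ span $V_{\mathcal S}$ and $v$ is a linear combination $v=\sum c_j\omega^j$; one must show such a $v$ is itself an overlapping direction. This is where the invariance under $G$ enters together with the combinatorial fact that overlapping directions are closed under the operations used in the comb construction. Concretely, using the conjugation identity from Lemma \ref{lemmaopposite}-style computations, if $\omega^i$ and $\omega^j$ are overlapping directions realized by $\Phi_{i,k},\Phi_{j,k}$, then composing $S_{\alpha_k^{(i)}}^{-1}S_{\beta_k^{(i)}}$ with a suitably long iterate of $S_{\gamma^{(j)}}$ and then with $S_{\alpha_l^{(j)}}^{-1}S_{\beta_l^{(j)}}$ produces, after appropriate normalization and a diagonal choice of $k,l$, a map whose relative-displacement vector is an arbitrary positive combination $\lambda_i\omega^i+\lambda_j\omega^j$ — this is exactly the mechanism already exploited in (\ref{esta})–(\ref{cone}) to build a path whose increments point in prescribed cones. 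Iterating over all the $\omega^j$ and using the $G$-invariance to flip signs freely, every direction in $V_{\mathcal S}\cap S^{d-1}$ is attained as a limit of normalized displacement vectors, i.e. is an overlapping direction, contradicting the assumption on $v$. The main obstacle I anticipate is the bookkeeping needed to verify that the diagonal/compound sequence of words still satisfies the technical derivative condition (\ref{boundderivative1}) at a common base point $a\in F$ with a common scale $\rho$ — Lemma \ref{lemma:ultimo} and Lemma \ref{lemmaFraser} are the right tools, and one likely has to apply Lemma \ref{lemmaFraser} once more to the compound sequence to extract a subsequence and base point for which (\ref{boundderivative}) holds, then note the limiting direction is unchanged.
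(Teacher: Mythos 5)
Your skeleton is close to the paper's (the conjugation identity $\Phi_{\alpha_k\gamma,\beta_k\gamma}(x)=c_\gamma^{-1}O_\gamma^{-1}\Phi_k(S_\gamma x)$ for the invariance; the comb mechanism of Theorem \ref{teotangent} plus Lemma \ref{lemmaFraser} to realize nonnegative combinations of the $\omega^i$, with Lemma \ref{lemmaopposite} handling signs), but there is a genuine gap exactly at the point you flag and then wave off: relocating the base point. For the invariance step you must realize $\omega$ by a sequence whose base point --- a point where both the technical condition (\ref{boundderivative1}) holds \emph{and} $\Phi_k(\cdot)/\|\Phi_k(\cdot)\|\to\omega$ --- lies in $S_\gamma(F)$. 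Your fix (``shrink $\rho$ so that $S_\gamma(B(a',\rho'))\subset B(a,\rho)$'') can fail outright, since $S_\gamma(F)$ need not meet $B(a,\rho)$ at all; and even when it does, the technical condition only gives $\|\Phi_k(x)-\Phi_k(a)\|\le 2\delta_k$, which is comparable to $\|\Phi_k(a)\|$ itself, so at a different base point the normalized values may converge to a completely different direction, and all you could conclude is that $O_\gamma^{-1}\omega'$ is an overlapping direction for some unidentified $\omega'$. The same missing step reappears in your first assertion as ``note the limiting direction is unchanged'' after reapplying Lemma \ref{lemmaFraser} to the compound sequence: the lemma hands you a base point you do not control, and Lemma \ref{lemma:ultimo} plus Lemma \ref{lemmaFraser} alone cannot guarantee the direction survives the change of base point.

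The ingredient you are missing is the uniformity estimate (\ref{bound relative}) from the $\theta$-refined construction of the sets $W_n(x)$: the displacement $g_j\circ h_j(x)-x$ differs from $g_j\circ h_j(x_0)-x_0$ by at most $\epsilon$ uniformly in $x\in F$, while the displacement itself has size of order $1/l\gg\epsilon$, so the compound maps satisfy $\Phi_l(x)/\|\Phi_l(x)\|\to v$ for \emph{every} $x\in F$. This is what lets the paper apply Lemma \ref{lemmaFraser} freely: to the compound sequence for the span statement, and with $\Gamma=S_\gamma(F)$ for the invariance, which places the base point at $a=S_\gamma(z)$ with $z\in F$; your conjugation identity then gives $O_\gamma^{-1}\omega$ (no sign ambiguity) with the technical condition at $z$ and radius $c_\gamma^{-1}\rho$ (the radius grows, it does not shrink). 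Two smaller remarks: invoking Theorem \ref{paththeorem} in this proof would be circular, since its proof uses Theorem \ref{teovec} (you do in effect abandon that route); and the sign flips in the span argument need only Lemma \ref{lemmaopposite}, not the full $G$-invariance, so the order of the two assertions can be as in the paper (span first, then invariance), which is also logically necessary because the invariance argument uses the all-$x$ convergence established in the first part. Your final passage from $O_\gamma^{-1}V_{\mathcal S}\subset V_{\mathcal S}$ to invariance under all of $G$ is fine; the paper instead uses the orthonormal-basis, finite-dimensionality argument.
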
  

\begin{proof}
Let $\omega^1,\ldots, \omega^m$ be linearly independent overlapping directions generating $V_{\mathcal S}$ and let $v=t_1\omega^1+\ldots+t_m\omega^m$. We assume $\|v\|=1$ and $t_i\ge 0$ for all $i$, with at least one of them positive; in case $t_i<0$ for some $i$, it is enough to consider the overlapping direction $-\omega^i$ instead of $\omega^i$. 

Pick any $x_0\in F$. Given $l$ large, by the construction from the proof of Theorem \ref{teotangent} and since $t_i\ge0$, there exists $n$ large enough, that we assume $l<n$, so that 
$$\Big(\mathcal C_{x_0}\big(B(v,1/l)\big)\cap A(x_0, 1/l)\Big)\cap W_n(x_0)-\{x_0\}\neq\{\emptyset\},$$ 
where $A(x,r):=B(x,2r)-B(x,r)$. This means that there are $j_l>0$ and maps $g_{j_l}=S_{\alpha_l}^{-1}$ and $h_{j_l}=S_{\beta_l}$, for some $\alpha_l, \beta_l\in \mathcal I^{\ast}$, such that 
$$g_{j_l}\circ h_{j_l}(x_0)\in\mathcal C_{x_0}\big(B(v,1/l)\big)$$
and moreover,
$$1/l<\|g_{j_l}\circ h_{j_l}(x_0)-x_0\|\le 2/l.$$ 
We define $\Phi_l=S_{\alpha_l}^{-1}\circ S_{\beta_l}-I=g_{j_l}\circ h_{j_l}-I$, so that for any $x\in F$ we have
by (\ref{bound relative}), 
\begin{equation*}\label{5am}
\|\Phi_l(x_0)-\Phi_l(x)\|<\epsilon.
\end{equation*}
Then, recalling that $\epsilon=1/n^{m+2}$, a simple geometrical argument shows that 
$$g_{j_l}\circ h_{j_l}(x)\in\mathcal C_{x}\big(B(v,1/l+1/n^m)\big)$$ and hence 
\begin{equation}\label{a}
\Phi_l(x)/\|\Phi_l(x)\|\to v \ \textrm{ as } \  l\to\infty
\end{equation}
for any $x\in F$.
We also obtain
$\|\Phi_l(x)\|\le 2/l+\epsilon$, and hence $0<\|\Phi_l\|_{{\infty}}\to 0$ as $l\to\infty$ since $F\Subset \rr^d$. Therefore, applying Lemma \ref{lemmaFraser}, we obtain a subsequence of $\Phi_l$ and some $a\in F$ such that the technical condition (\ref{boundderivative}) holds. But $\Phi_k(a)/\|\Phi_k(a)\|\to v$ since (\ref{a}) holds for any $x\in F$, therefore $v$ is an overlapping direction. 

Now we show the invariance with respect to $G$. Firstly, suppose that $O_\gamma\in G$ is the orthogonal part of $S_\gamma$, for some $\gamma\in \mathcal I^\ast$. Pick an overlapping direction $\omega$. Then, by definition, there are $a\in F$, $\rho>0$ and a pair $\alpha_k, \beta_k\in \mathcal I^\ast$ for each $k$ such that the maps $\Phi_k=S_{\alpha_k}^{-1}\circ S_{\beta_k}-I$ satisfy $\Phi_k(a)/\|\Phi_k(a)\|\to \omega$ as $k\to\infty$, and also the technical condition (\ref{boundderivative}) holds. By the argument from the first part of the proof, we assume that $\Phi_k/\|\Phi_k(x)\|\to	\omega$ as $k\to\infty$ for any $x\in F$. Then, applying Lemma \ref{lemmaFraser} with $\Gamma:= S_\gamma(F)\Subset\rr^d$, we further assume $a=S_\gamma(z)$ for some $z\in F$. 

From the identity
$$\bigl(S_{\alpha_k\gamma}^{-1}\circ S_{\beta_k\gamma}-I\bigr)(x)=
c_\gamma^{-1}O_{\gamma}^{-1}\Phi_k(S_\gamma(x)),$$
it follows that $0<\|S_{\alpha_k\gamma}^{-1}\circ S_{\beta_k\gamma}-I\|_{{\infty}}\to0$ and $\Phi_{\alpha_k\gamma, \beta_k\gamma}(z)/\|\Phi_{\alpha_k\gamma, \beta_k\gamma}(z)\|\to O_\gamma^{-1}\omega$. Also, 
$$\|D \Phi_{\alpha_k\gamma,\beta_k\gamma}\|_{\mathrm{op}}=c_\gamma^{-1}\|D(O_\gamma^{-1}\circ\Phi_{k}\circ S_\gamma)\|_{\mathrm{op}}=\|D\Phi_{k}\|_{\mathrm{op}}$$ 
hence, if $x\in B(z,c_\gamma^{-1}\rho)$, we get by (\ref{boundderivative}) 
\begin{equation}\label{pp}
c_\gamma^{-1}\rho\|D \Phi_{\alpha_k\gamma,\beta_k\gamma}\|_{\mathrm{op}}\le c_\gamma^{-1}\|\Phi_{k}(S_\gamma(x))\|= \|\Phi_{\alpha_k\gamma,\beta_k\gamma}(x)\|.
\end{equation}
Consequently, $O_\gamma^{-1}\omega$ is an overlapping direction and therefore, $V_{\mathcal S}$ is invariant under the inverse orthogonal map $O_\gamma^{-1}$.
But then, 
if $\{u_1, \ldots, u_m\}$ is an orthonormal basis of $V_{\mathcal S}$, we obtain that $\{O_\gamma^{-1}u_1,\ldots, O_{\gamma}^{-1}u_m\}$ is also an orthonormal basis of $V_{\mathcal S}$, so $\omega=a_1O_\gamma^{-1}u_1+\ldots+a_mO_\gamma^{-1}u_m$ for some scalars $a_i$, and in consequence $O_\gamma \omega=a_1u_1+\ldots+a_mu_m\in V_{\mathcal S}$, so $V_{\mathcal S}$ is invariant under $O_\gamma$.

Finally, for an arbitrary $O\in G$, pick a sequence $O_{\gamma^k}\to O$, with $O_{\gamma^k}\in\{O\gamma:\gamma\in \mathcal I^\ast\}$. Then $O_{\gamma^k}\omega\to O\omega$, and $O_{\gamma^k}\omega\in V_{\mathcal S}$ for all $k$, so $O\omega\in V_{\mathcal S}$ since $V_{\mathcal S}$ is closed.
\end{proof}

We have the following consequence, that will be useful later.

\begin{corollary}\label{Coro:invariance}
Let $\mathcal S$ be any IFS for the self-similar set $F$ and let $G$ be defined as above. Then, $V_{\mathcal O}$ is invariant under $G$. 
\end{corollary}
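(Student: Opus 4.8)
\textbf{Proof proposal for Corollary \ref{Coro:invariance}.}

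The plan is to reduce the statement to Theorem \ref{teovec} by passing to an IFS whose overlapping subspace equals $V_{\mathcal O}$, and then checking that the relevant orthogonal group does not change. Recall that $V_{\mathcal O} = V_{\mathcal S_0}$ for some generating IFS $\mathcal S_0$ of $F$ with $\dim V_{\mathcal S_0}$ maximal, and that by the discussion after the definition of $V_{\mathcal O}$ this subspace is independent of the choice of such a maximal IFS. So first I would fix one such $\mathcal S_0$ with $V_{\mathcal S_0} = V_{\mathcal O}$. By Theorem \ref{teovec}, $V_{\mathcal S_0}$ is invariant under the group $G_0 := \overline{\{O_\gamma : \gamma \in \mathcal I_0^\ast\}}$, where $\mathcal I_0$ is the index set of $\mathcal S_0$. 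What must be shown is that $V_{\mathcal O}$ is also invariant under $G := \overline{\{O_\gamma : \gamma \in \mathcal I^\ast\}}$ for an \emph{arbitrary} generating IFS $\mathcal S$, whose orthogonal group $G$ may a priori be larger (or simply different).

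The key step is to form the union IFS $\mathcal S' := \mathcal S \cup \mathcal S_0$, which is again an IFS generating $F$ (as noted in the excerpt, the union of two IFS generating $F$ generates $F$). Then $V_{\mathcal S'} \supseteq V_{\mathcal S_0} = V_{\mathcal O}$ since every overlapping direction for $\mathcal S_0$ is witnessed by words in $\mathcal I_0^\ast \subset (\mathcal I')^\ast$; on the other hand, by maximality of $\dim V_{\mathcal O}$ among all generating IFS, $\dim V_{\mathcal S'} \le \dim V_{\mathcal O}$, and therefore $V_{\mathcal S'} = V_{\mathcal O}$. Now apply Theorem \ref{teovec} to $\mathcal S'$: the subspace $V_{\mathcal S'} = V_{\mathcal O}$ is invariant under $G' := \overline{\{O_\gamma : \gamma \in (\mathcal I')^\ast\}}$. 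Since $\mathcal I \subset \mathcal I'$, every orthogonal part $O_\gamma$ with $\gamma \in \mathcal I^\ast$ lies in $\{O_\gamma : \gamma \in (\mathcal I')^\ast\}$, so $G \subseteq G'$, and invariance under $G'$ immediately gives invariance under $G$. This proves that $O v \in V_{\mathcal O}$ for all $O \in G$ and $v \in V_{\mathcal O}$.

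The only point requiring care — and the step I expect to be the main (minor) obstacle — is the bookkeeping in the identity $V_{\mathcal S'} = V_{\mathcal O}$: one must be sure that enlarging the alphabet does not accidentally create \emph{new} overlapping directions outside $V_{\mathcal O}$, but this is exactly ruled out by the maximality defining $V_{\mathcal O}$, so there is nothing deeper here. One small subtlety worth a sentence: if $F$ is contained in a $(d-1)$-hyperplane the hypothesis $F \Subset \rr^d$ fails and the statement is understood in the degenerate sense $V_{\mathcal O} = \{0\}$, which is trivially invariant; and if $\mathcal S$ (or $\mathcal S'$) satisfies the weak separation property, then $V_{\mathcal S'} = \{0\}$ forces $V_{\mathcal O} = \{0\}$ as well, and again the claim is immediate. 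Otherwise the argument above applies verbatim.
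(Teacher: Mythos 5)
Your argument is correct and is essentially the paper's own proof: form the union $\tilde{\mathcal S}=\mathcal S\cup\mathcal S_0$ with a maximal IFS $\mathcal S_0$, use maximality to identify $V_{\tilde{\mathcal S}}=V_{\mathcal O}$, and apply Theorem \ref{teovec} to $\tilde{\mathcal S}$ together with the inclusion $G\subseteq\tilde G$. The extra bookkeeping you spell out (that enlarging the alphabet cannot create overlapping directions outside $V_{\mathcal O}$) is exactly the maximality point the paper uses implicitly, so there is nothing to change.
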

\begin{proof}
Let $\mathcal S'$ be an IFS for $F$ such that $V_{\mathcal S'}=V_{\mathcal O}$. Then, $\tilde{\mathcal S}=\mathcal S\cup \mathcal S'$ is also an IFS for $F$, and denoting by $\tilde G$ the corresponding group, then $G\subset \tilde G$, and the result follows from Theorem \ref{teovec} applied to $\tilde{\mathcal S}$.
\end{proof}

\subsection{Proof of Theorem \ref{paththeorem} }
Now we are in position to prove the main theorem, which we deduce from Theorem \ref{teotangent} and Theorem \ref{teovec}.

\begin{proof}[Proof of Theorem \ref{paththeorem}.] Given any $s$-dimensional subspace $V\subset V_{\mathcal S}$, first we  prove formula (\ref{formula}), i.e., that $$\dim_A F=\dim V+\dim_A(\pi_{V^{\perp}}(F)).$$

Let $\mathcal B=\{\omega^1,\ldots, \omega^s\}$ be a basis of $V$ with unit vectors, which are overlapping directions by Theorem \ref{teovec}. As above, we denote by $W$ the corresponding $s$-parallelotope. Extending $\mathcal B$ appropriately to a base of $\rr^d$, we further assume that $W\subset\rr^s\times\{\mathbf 0^{d-s}\}$, where $\mathbf 0^{d-s}$ is the zero element of $\rr^{d-s}$. For this reason we consider $W$ as a subset of $\rr^d$ or $\rr^s$, depending on whether we consider sumsets or Cartesian products.  

By Theorem \ref{teotangent}, there is  a  weak tangent $\hat F$ of $F$ containing $W+F$, so the monotonicity of the Assouad dimension and (\ref{boundtangent}) imply
\begin{equation}\label{uno}
\dim_A(W+F)\le\dim_A \hat F\le  \dim_A F=\dim_A (F+\mathbf 0)\le\dim_A (W+F).
\end{equation}
Then, to prove formula (\ref{formula}) we consider the set $W+F$, which for this purpose has a simpler structure than $F$.

Observe that 
\begin{equation*}
W+F\subset [W]_{\delta}\times \pi_{V^{\perp}}(F);
\end{equation*}
here the neighbourhood $[W]_{\delta}$ is considered in $\rr^s$ and $\delta$ is a constant such that $\pi_{V}(W+F)\subset [W]_\delta$.
Then, by the upper bound for the Assouad dimension of Cartesian products (see \cite[Theorem 2.1]{Fr}), we obtain the upper bound
\begin{equation}\label{dos1}
\dim_A(W+F)\le\dim_A([W]_{\delta})+\dim_A(\pi_{V^{\perp}}(F))=\dim V+\dim_A(\pi_{V^{\perp}}(F)).
\end{equation}

For the lower bound, let $\xi>0$ be such that $W$ contains an $s$-dimensional ball of radius $\xi$, and pick $n$ such that $\diam F_\alpha\le\xi/3$ for all $\alpha\in\mathcal I^n$; here $F_\alpha=S_{\alpha}(F)$. Note that 
\begin{equation*}
\dim_A (\pi_{V^{\perp}}(F))=\dim_A(\pi_{V^{\perp}}(F_{\tau}))
\end{equation*}
for some $\tau\in\mathcal I^n$ because of the finite stability of the Assouad dimension. (Indeed, the above equality holds for any $\alpha\in\mathcal I^n$ and for any $n$, as it is shown in Lemma \ref{lemma:projection} in Section \ref{section:weaknoweak}.)
Let $B_d(x,\xi/3)$ be a ball (in $\rr^d$) containing $F_{\tau}$. Note that if $B_s(a,\xi)$ is a ball (in $\rr^s$) contained in $\pi_{V}(W+x)$, then for any $z\in B_d(x,\xi/3)$ we have
\begin{equation}\label{dos}
B_s(a,\xi/3)\subset \pi_{V}(W+z),
\end{equation} and consequently
 \begin{equation}\label{tres}
 B_s(a,\xi/3)\times \pi_{V^{\perp}}(F_{\tau})\subset W+F_{\tau}\subset  W+F.
 \end{equation}
Now, recall that a lower bound for the Assouad dimension of the Cartesian products is (see \cite[Theorem 2.1]{Fr})
$$\dim_L X+\dim_A Y\le\dim_A(X\times Y),$$
where $\dim_L$ denotes the lower dimension, whose definition is dual to the Assouad dimension, and also $\dim_L B=s$ for any non empty open ball in $\rr^s$. Applying this formula in (\ref{tres}) together with the monotonicity of the Assouad dimension, we get by (\ref{uno}) and (\ref{dos1}),
$$
\dim_A F=\dim V+\dim_A(\pi_{V^{\perp}}(F))
$$
which is formula (\ref{formula}).

Finally, we prove the last statement of Theorem \ref{paththeorem}, that is, $\dim V<\dim_A F$ if $\dim V<d$. Denote by $\mathcal V(n)$ the class of $n$-dimensional vector subspaces of $\rr^d$. A set is $r$-separated if the distance between any two points in the set is at least $r$. We use the following.

\begin{lemma}\label{lemmaunifperf}
Let  $F\Subset\rr^d$ be a self-similar set. There are positive constants $t$ and $C$ such that the following holds. For any $x\in F$, $0<r<R\le{\rm diam}\: F$ and any $U\in\mathcal V(n)$, with $1\le n\le d$, then the orthogonal projection $\pi_U (F\cap B(x,R))$ contains at least $C(R/r)^t$ points that are $r$-separated.
\end{lemma}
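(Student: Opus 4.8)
The plan is to prove this by self-similarity, reducing it to a single elementary observation: since $F\Subset\rr^d$, it is not contained in any hyperplane, so every one-dimensional orthogonal projection of $F$ has positive diameter, and in fact there is a uniform lower bound depending only on $F$. First I would establish this uniformity. For a unit vector $u$, write $L(u)=\diam(\pi_u F)=\max\{u\cdot(x-y):x,y\in F\}$. This is a continuous, strictly positive function on the compact sphere $S^{d-1}$ (positivity is exactly the hypothesis $F\Subset\rr^d$), so $c_0:=\min_{u\in S^{d-1}}L(u)>0$. More generally, for $U\in\mathcal V(n)$ with $1\le n\le d$, picking any unit vector $u\in U$ gives $\diam(\pi_U F)\ge\diam(\pi_u F)\ge c_0$; thus every orthogonal projection of $F$ onto a subspace of dimension $\ge 1$ has diameter at least $c_0$.

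Next I would use self-similarity to propagate this into every ball. Fix $x\in F$ and $0<r<R\le\diam F$. Choose $n_0$ (depending only on $F$, via the contraction ratios) so that every cylinder $F_\alpha=S_\alpha(F)$ with $\alpha\in\mathcal I^{n_0}$ has $\diam F_\alpha\le c_0/3$; then pick a scale so that within $B(x,R)$ one can find a cylinder $F_\beta\subset F\cap B(x,R)$ with $\diam F_\beta$ comparable to $R$ — concretely, iterate the IFS along the address of $x$ until the cylinder containing $x$ first has diameter $\le R/(2\diam F)\cdot\diam F=R/2$, say; call this cylinder $F_\beta$, with $c_{\min}\cdot\tfrac R2\le\diam F_\beta\le\tfrac R2$ where $c_{\min}=\min_i c_i$, so $F_\beta\subset F\cap B(x,R)$ and $\diam F_\beta\ge c_{\min}R/2$. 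Since $F_\beta$ is a similar copy of $F$ scaled by $c_\beta\ge c_{\min}R/(2\diam F)$, the projection $\pi_U(F_\beta)$ is a similar copy (scaled by $c_\beta$) of $\pi_{O_\beta^{-1}U}(F)$, hence has diameter $\ge c_\beta c_0\ge \tfrac{c_{\min}c_0}{2\diam F}\,R=:c_1 R$.

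Now I would build the $r$-separated set greedily. Inside $\pi_U(F_\beta)$, a set of diameter $\ge c_1 R$ lying in an $n$-dimensional space, a maximal $r$-separated subset $P$ has the property that the balls $B(p,r)$, $p\in P$, cover $\pi_U(F_\beta)$, so $\card P\cdot\omega_n r^n\ge$ (volume of a ball of radius $c_1R/(2\sqrt n)$ inscribed after a diameter estimate) — more simply, $\card P\ge (c_1R/r)^? $; to avoid an honest volume computation that only gives the right power of $R/r$ when $n$ equals the projection's affine dimension, I would instead argue along a single direction: pick $u\in U$ realizing $L(O_\beta^{-1}u)\ge c_0$, so $\pi_u(F_\beta)$ is an interval (the image of $\pi_{O_\beta^{-1}u}F$ scaled by $c_\beta$) of length $\ge c_1R$; it contains at least $\lfloor c_1 R/r\rfloor\ge \tfrac{c_1}{2}\cdot\tfrac Rr$ points that are $r$-separated (for $r<R$), and their preimages under $\pi_u\!\restriction\!\pi_U(F_\beta)$ give $r$-separated points in $\pi_U(F_\beta)\subset\pi_U(F\cap B(x,R))$. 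This yields the claim with $t=1$ and $C=c_1/2$, constants depending only on $F$.

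The main obstacle is making the uniformity genuinely independent of the subspace $U$ and of the scale $R$ simultaneously: the compactness argument handles uniformity in $U$, and self-similarity plus the choice of $F_\beta$ at a scale comparable to $R$ handles the scale, but one must be careful that the cylinder $F_\beta$ lies entirely in $B(x,R)$ — this is why one picks the cylinder along $x$'s own address and stops at diameter $\le R/2$, guaranteeing $F_\beta\subset B(x,\diam F_\beta+ \,0)\subset B(x,R)$ since $x\in F_\beta$. The only subtlety beyond this is that the rotational part $O_\beta$ moves the subspace, but since the lower bound $c_0$ over all subspaces is uniform, $\pi_{O_\beta^{-1}U}(F)$ still has diameter $\ge c_0$, so nothing is lost. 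Everything else is routine.
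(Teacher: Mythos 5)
Your first two steps are sound and in fact coincide with the paper's: the compactness argument giving a uniform lower bound $c_0>0$ for $\diam \pi_U F$ over all subspaces $U$ (the paper does this with spanning vectors $v_j=x_j-x_0$ and the function $\sum_j|u\cdot v_j|$ on $S^{d-1}$), and the location of a cylinder $F_\beta\subset F\cap B(x,R)$ along the address of $x$ with $c_\beta$ comparable to $R$, so that $\pi_U(F_\beta)$ is a scaled isometric copy of $\pi_{O_\beta^{-1}U}(F)$ of diameter $\ge c_1R$. The gap is the final step: you assert that $\pi_u(F_\beta)$ is an \emph{interval} of length $\ge c_1R$ and hence contains $\lfloor c_1R/r\rfloor$ many $r$-separated points. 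This is false in general. The hypothesis is only that $F$ is a self-similar set not contained in a hyperplane; $F$, and therefore each of its one-dimensional projections, may be totally disconnected with very small dimension. A compact subset of a line of diameter $\ge c_1R$ guarantees exactly two $r$-separated points, nothing more. Indeed the conclusion with $t=1$ cannot hold: take $F\Subset\rr^2$ a self-similar set with strong separation and Hausdorff (hence Assouad) dimension $s<1$ (three maps with tiny ratios and non-collinear fixed points). Since $\pi_U$ is $1$-Lipschitz, any $r$-separated subset of $\pi_U(F\cap B(x,R))$ lifts to an $r$-separated subset of $F\cap B(x,R)$, whose cardinality is at most a constant times $(R/r)^{s}$, which is eventually much smaller than $(R/r)^{1}$. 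The volume/covering fallback you mention and discard also fails for the same reason: a maximal $r$-net only covers the projected set, which can have arbitrarily small content, so no volume bound from below is available.

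What is missing is a multi-scale iteration, and this is exactly how the paper proceeds. The single-scale fact you have (two points of $\pi_U(F\cap B(x,\tilde r))$ at mutual distance $\ge C_3\tilde r$, valid for every $x\in F$, every $0<\tilde r\le\diam F$ and every $U$, with $C_3$ uniform) is applied repeatedly: starting at scale $R/2$ one gets two such points, one lifts them to preimages $z_1,z_2\in F\cap B(x,R/2)$, applies the same fact inside the balls $B(z_l,C_3R/8)$, and so on; after $k$ steps this produces $2^k$ points of $\pi_U(F\cap B(x,R))$ that are $C_3^kR/8^k$-separated, and choosing $k$ with $r\approx C_3^kR/8^k$ yields the count $C(R/r)^t$ with the (small) exponent $t=\log 2/\log(8/C_3)$. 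The lemma only claims \emph{some} $t>0$, and in general nothing stronger is true; your argument would need to be restructured along these lines rather than via an interval or a volume count at the single scale $R$.
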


\begin{proof}
Since $F\Subset\rr^d$, we pick $x_0,\ldots, x_d$  points in $F$ so that the vectors $v_j=x_j-x_0$, $1\le j\le d$, span $\rr^d$. Then, there is some $C_1>0$ such that for any $U\in \mathcal V(n)$ we have 
\begin{equation}\label{uuu}
\| \pi_U(v_j)\|\ge C_1, \ \ \textrm{for some} \ \ 1\le j\le d.
\end{equation}   
In fact, given any unit vector $u\in U$, then $\| \pi_U(v_j)\|\ge|\pi_u(v_j)|=|u\cdot v_j|$, and hence (\ref{uuu}) is an easy consequence of the continuity of the non vanishing function $f(u)=\sum_{j=1}^d|u\cdot v_j|$ in $S^{d-1}$.

Given $x\in F$ and $0<\tilde r\le\diam F$, let $i\in \mathcal I^\ast$ be such that 
$$F_i\subset B(x,\tilde r) \ \ \textrm{but  } \ F_{\overline i}\setminus B(x,\tilde r)\neq \emptyset ,  \ \ \textrm{with }x\in F_{i};$$
here ${\overline i}=(i_1, \ldots, i_{k-1})$ if $i=(i_1,\ldots, i_k)$.  Note that $(c^{\ast})^{-1}\tilde r\le c_i$, with $c^{\ast}$ the maximum of the contraction ratios of the similarities of the IFS. Then, by linearity of the projection, 
\begin{align*}
\|\pi_U(S_i(x_j))-\pi_U(S_i(x_0))\|
=c_i\|\pi_U( O_{i}(v_j ))\|
&=c_i\|\pi_{O_i^TU}(v_j)\|,
\end{align*}
and hence, by (\ref{uuu}), there is some $j_0$ for which  
$$\|\pi_U(S_i(x_{j_0}))-\pi_U(S_i(x_0))\|\ge c_i C_1\ge C_3 \tilde r,$$ with $C_3=C_1(c^{\ast})^{-1}$. This shows that for $x\in F$ and $0<\tilde r\le\diam F$ there are points $y_1, y_2\in \pi_U(B(x,\tilde r)\cap F)$ such that $\|y_1-y_2\|\ge C_3\tilde r$.

Now, given $0<r<R\le\diam F$ and $x\in F$, the above property may be used iteratively to produce an $r$-separated subset of $\pi_U(B(x, R)\cap F)$ with cardinality at least $C(R/r)^t$, with  $C$ a positive constant (independent of $r$ and $R$) and $t=\log2/\log(8/C_3)$. In fact, starting with $\tilde r=R/2$, we get two points $y_1, y_2\in\pi_U(B(x, R/2)\cap F)$ that are at least $C_3R/2$ apart. Let $z_l\in B(x, R/2)\cap F$, $l=1,2$, be such that $\pi_U(z_l)=y_l$. Then, in each ball $\pi_U(B(z_l,C_3R/8)\cap F)\subset\pi_U(B(x,R)\cap F)$ we find points $y_{l,1}, y_{l,2}$ that are at least $C_3^2/16$ apart. Continuing in this way, after $k$ iterations we find $2^k$ points in $\pi_U(B(x,R)\cap F)$ that are $C_3^kR/8^k$ apart.
Choosing $k$ so that $r\approx C_3^kR/8^k$, we obtain $(R/r)^t\approx 2^k$ since $t=\log2/\log(8/C_3)$.
\end{proof}

Now, assume the strict inequality $\dim V<d$ holds. Given $y\in \pi_{V^{\perp}}(F)$, pick $x\in F$ such that $y=\pi_{V^{\perp}}(x)$. Then, for $0<R\le\diam \pi_{V^{\perp}}(F)$ we have
$$\pi_{V^{\perp}}(F)\cap B(y,R)\supset\pi_{V^{\perp}}(F\cap B(x,R)).$$
Then, Lemma \ref{lemmaunifperf} immediately implies $\dim_A( \pi_{V^{\perp}}(F))\ge t>0$, so by formula (\ref{formula}) we obtain $\dim V<\dim_A F$, which concludes the proof. 
\end{proof}

\section{Examples.}\label{sectionexamples}

\subsection{Homogeneous self-similar sets.}
Recall that an IFS is homogeneous (or equicontractive) if all its defining similitudes have the same contraction ratio.

\begin{proposition}
Let $F\Subset \rr^d$ be a self-similar set that is the attractor of an homogeneous IFS, with contraction ratio $c$, that does not satisfy the WSP. If the associated orthogonal group contains only the identity, then the set of overlapping directions is \begin{equation}\label{ome}
\Omega_{\mathcal S}=\Bigl\{\textrm{limit points of } \left\{\frac{S_{\beta_k}(0)-S_{\alpha_k}(0)}{\|S_{\beta_k}(0)-S_{\alpha_k}(0)\|}\right\}_k : |\alpha_k|=|\beta_k|  \ \textrm{and } 0<\frac{\|S_{\beta_k}(0)-S_{\alpha_k}(0)\|}{c^{|\beta_k|}}\underset{k}{\to}0\Bigr\}.
\end{equation}
\end{proposition}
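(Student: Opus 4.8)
The plan is to show the two inclusions between the set $\Omega_{\mathcal S}$ of overlapping directions (in the sense of the main definition) and the set defined by the right-hand side of (\ref{ome}), exploiting that the orthogonal group is trivial and that the IFS is homogeneous. First I would unwind what the failure of the WSP gives us: sequences $\alpha_k,\beta_k\in\mathcal I^\ast$ with $0<\|\Phi_k\|_\infty\to 0$ where $\Phi_k=S_{\alpha_k}^{-1}\circ S_{\beta_k}-I$. Since all $O_i=I$, we have $S_\gamma(x)=c^{|\gamma|}x+S_\gamma(0)$, so $\Phi_k(x)=c^{|\alpha_k|-|\beta_k|}x + c^{-|\alpha_k|}(S_{\beta_k}(0)-S_{\alpha_k}(0)) - x$. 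The key elementary observation is that $\|\Phi_k\|_\infty\to 0$ forces $c^{|\alpha_k|-|\beta_k|}\to 1$, i.e. $|\alpha_k|-|\beta_k|\to 0$, hence (as these are integers) $|\alpha_k|=|\beta_k|$ for $k$ large; discarding finitely many terms we may assume $|\alpha_k|=|\beta_k|=:\ell_k$ throughout. Then $\Phi_k$ is the constant map $\Phi_k\equiv c^{-\ell_k}(S_{\beta_k}(0)-S_{\alpha_k}(0))$, so $D\Phi_k=0$ and $\|\Phi_k\|_\infty=\|\Phi_k(0)\|=c^{-\ell_k}\|S_{\beta_k}(0)-S_{\alpha_k}(0)\|$.

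Second, I would check the technical condition (\ref{boundderivative1}). Because $D\Phi_k=0$, the left-hand side of (\ref{boundderivative1}) vanishes identically, while the right-hand side $\|\Phi_k(a_k)\|=\|\Phi_k(0)\|>0$, so (\ref{boundderivative1}) holds trivially for every $a\in F$ and every $\rho>0$; moreover every point of $B(a,\rho)$ realizes the minimum since $\Phi_k$ is constant. This already pins down the candidate overlapping directions: $\omega$ is an overlapping direction iff $\omega$ is a limit point of $\{\Phi_k(a)/\|\Phi_k(a)\|\}_k=\{\Phi_k(0)/\|\Phi_k(0)\|\}_k$ for \emph{some} such pair of sequences, and $\Phi_k(0)/\|\Phi_k(0)\|=(S_{\beta_k}(0)-S_{\alpha_k}(0))/\|S_{\beta_k}(0)-S_{\alpha_k}(0)\|$ because the positive scalar $c^{-\ell_k}$ cancels. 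This gives the inclusion $\Omega_{\mathcal S}\subset$ RHS of (\ref{ome}): the constraint $0<\|S_{\beta_k}(0)-S_{\alpha_k}(0)\|/c^{\ell_k}\to 0$ is exactly $0<\|\Phi_k\|_\infty\to 0$, and $|\alpha_k|=|\beta_k|$ was forced above.

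Third, for the reverse inclusion I would take any pair of sequences with $|\alpha_k|=|\beta_k|=\ell_k$ and $0<\|S_{\beta_k}(0)-S_{\alpha_k}(0)\|/c^{\ell_k}\to 0$, set $\Phi_k=S_{\alpha_k}^{-1}\circ S_{\beta_k}-I$, observe as above that $\Phi_k$ is the constant map with value $c^{-\ell_k}(S_{\beta_k}(0)-S_{\alpha_k}(0))$, hence $0<\|\Phi_k\|_\infty\to 0$ so the IFS fails WSP along this pair, and (\ref{boundderivative1}) holds trivially. Therefore any limit point of $\{(S_{\beta_k}(0)-S_{\alpha_k}(0))/\|S_{\beta_k}(0)-S_{\alpha_k}(0)\|\}_k$ is, by definition, an overlapping direction. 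Combining the two inclusions yields $\Omega_{\mathcal S}=$ RHS of (\ref{ome}).

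\textbf{Main obstacle.} The only genuinely substantive point is the first step: showing $\|\Phi_k\|_\infty\to 0$ forces $|\alpha_k|=|\beta_k|$ eventually. One has to argue that if $c^{|\alpha_k|-|\beta_k|}$ stayed bounded away from $1$, then either the linear part of $\Phi_k$ is uniformly bounded below on $[0,1]^d$ (when $|\alpha_k|<|\beta_k|$, since then $\|(c^{|\alpha_k|-|\beta_k|}-1)x\|$ can be made large for suitable $x$, contradicting $\|\Phi_k\|_\infty\to0$), or $\Phi_k(0)=(c^{|\alpha_k|-|\beta_k|}-1)\cdot 0 + c^{-|\alpha_k|}(S_{\beta_k}(0)-S_{\alpha_k}(0))$ must itself go to $0$ while the coefficient of the linear term does not — and a short computation of $\Phi_k$ at two points (say $0$ and one of the $x_j$ with $v_j=x_j-x_0$ spanning $\rr^d$, as in Lemma \ref{lemmaunifperf}) separates the linear and constant parts and delivers the contradiction. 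Everything after that is bookkeeping with the constant maps $\Phi_k$; in particular the triviality of (\ref{boundderivative1}) makes the choice of $a$ and $\rho$ irrelevant, which is why the resulting description of $\Omega_{\mathcal S}$ is so clean.
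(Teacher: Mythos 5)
Your proposal is correct and proceeds essentially as the paper does: with trivial orthogonal parts you write $\Phi_k$ as an affine map, use $\|\Phi_k\|_{\infty}\to 0$ together with equicontractivity to force $|\alpha_k|=|\beta_k|$ for all large $k$, observe that $\Phi_k$ is then a constant map so that the technical condition (\ref{boundderivative1}) holds trivially, and read off both inclusions from the normalized constant term. The only (harmless) slip is the exponent of the linear coefficient, which is $c^{|\beta_k|-|\alpha_k|}$ rather than $c^{|\alpha_k|-|\beta_k|}$; this does not affect the argument.
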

\begin{proof}
By the hypothesis of trivial orthogonal part of the similitudes, we have
\begin{align}\label{ah}
\Phi_k(x)&=S_{\alpha_k}^{-1}S_{\beta_k}(x)-x=(c_{\alpha_k}^{-1}c_{\beta_k}-1)x+c_{\beta_k}^{-1}(S_{\beta_k}(0)-S_{\alpha_k}(0)).
\end{align} 
Let $A$ be the right hand side of (\ref{ome}). Clearly, by the homogeneity assumption and the above identity, $A\subset\Omega_{\mathcal S}$; the technical condition (\ref{boundderivative}) holds since $\Phi_k$ is constant. On the other hand,  if $\omega\in\Omega_{\mathcal S}$, then $0<\|\Phi_k\|_\infty\to0$, so (by picking $x=(0,\ldots,0) $ in (\ref{ah})) necessarily $c_{\beta_k}^{-1}\|S_{\beta_k}(0)-S_{\alpha_k}(0)\|\to0$ as $k\to\infty$, which implies $(c_{\alpha_k}^{-1}c_{\beta_k}-1)\to0$ as $k\to\infty$. Then, from the equicontractivity assumption, it follows easily that $c_{\alpha_k}=c_{\beta_k}$, and hence $|\alpha_k|=|\beta_k|$ for all $k$ large enough. Then, $\Phi_k$ is constant for all large enough $k$, so the technical condition (\ref{boundderivative}) trivially holds and in consequence, (\ref{ome}) follows.  
\end{proof}

\begin{example}\label{Example}
This set of examples are a modification from the example in \cite[Section 4.1]{FHOR}, which in turn are based on an example from Bandt and Graf \cite[Section 2 (5)]{BG}. 
Consider the similarity maps on $[0,1]^2$ given by 
$$S_1(x)=x/5, \ \ \ S_2(x)=x/5+(4/5,0) \ \ \ \textrm{and }\ \ S_3(x)=x/5+(0,4/5).$$ 
Also, for $$t=4\sum_{k\ge0}5^{-2^k}=0.9664 \ldots, \ \textrm{ and } \  \   u=4-4\sum_{k\ge1}5^{-2^k}=3.8335\ldots ,$$ 
let
$$S_4(x)=x/5+(t/5,0), \ \ \ S_5(x)=x/5+(0,u/5) \ \ \ \textrm{and } \ \ S_6(x)=x/5+(1/2, u/5).$$

{\rm(a)} Consider the IFS $\mathbf{\mathcal S_1}=\{S_1, S_2, S_3, S_4\}$, with attractor $F_1$; see Figure \ref{fig:2-1}. This example was considered in \cite[Section 4.1]{FHOR} as an example of a planar self-similar set which is the attractor of an IFS with no WSP and such that $\dim_H F_1<1\le\dim_A F_1<2$.
The parameter $t$ is chosen so that  $\{S_1, S_2, S_4\}$ is an IFS with no WSP, and hence producing overlaps on the horizontal direction;  see \cite{BG} and \cite{FHOR} for details. Moreover, the upper bound $\dim_A F_1\le1+ \log2/\log5$ is given in the later paper: if $\pi_x$ and $\pi_y$ denote the orthogonal projection onto the $x$- and $y$-axes, respectively, then $F_1\subset\pi_x(F_1)\times\pi_y(F_1)$, where $\pi_x(F_1)$ results a self-similar set with no WSP, so $\dim_A(\pi_x(F_1))=1$,  while $\pi_y(F_2)$ is a self-similar set that is the attractor of an IFS with the open set condition consisting of two maps with contraction ratio $1/5$, so $\dim_A(\pi_y(F_2))=\log2/\log5$.  

As we mentioned above, by construction,  $(1,0)$ is an overlapping direction, and if  $V=\text{gen}\{(1,0)\}$, then $\pi_{V^{\perp}}=\pi_y$. Hence we obtain $\dim_A F_1=1+ \log2/\log5$ from (\ref{formula}) in Theorem \ref{paththeorem}.

Observe that in this case we can conclude, a posteriori, that $V_{\mathcal O}=V$ since $\dim_A F_1<2$.

{\rm(b)} Let $\mathbf{\mathcal  S_2}=\{S_1, S_2, S_3, S_4, S_5\}$, with attractor $F_2$; see Figure \ref{fig:2-2}. As above, the maps $S_1$, $S_2$ and $S_4$ generates the overlapping direction $(1,0)$, while the parameter  $u$ is chosen so that $\{S_1, S_3, S_5\}$ produce vertical overlaps, so they generate the overlapping direction $(0,1)$. Then, $V_{\mathbf{\mathcal S_2}}=\rr^2=V_{\mathcal O}$, and in consequence, $\dim_A F_2=2$.

{\rm(c)} Finally, let $\mathbf{\mathcal S_3}=\{S_1, S_2, S_3, S_4, S_6\}$ with attractor $F_3$; see Figure \ref{fig:2-3}. As above, $(1,0)$ is an  overlapping direction. Moreover, $\pi_y(F_3)$ is a self-similar set in $\rr$ given by the maps $f_1(x)=x/5$, $f_2(x)=x/5+4/5$ and $f_3(x)=x/5+u/5$, and by the choice of $u$, this IFS does not satisfy the WSP.  In consequence, $\dim_A \pi_y(F_3)=1$, so by (\ref{formula}) we get $\dim_A F_3=2$. 

In this example, we do not know if $V_{\mathcal O}=\text{gen}\{(1,0)\}$, that is, if $(0,1)$ is not an overlapping direction. If this is the case, then this would be a counterexample for  Question \ref{question:proy}.
\end{example}

\begin{figure}\label{eje1}\begin{center}
\subfigure[The first step in the construction and the self-similar set $F_1$.]{
\label{fig:2-1}
\includegraphics[height=4cm, width=4cm]{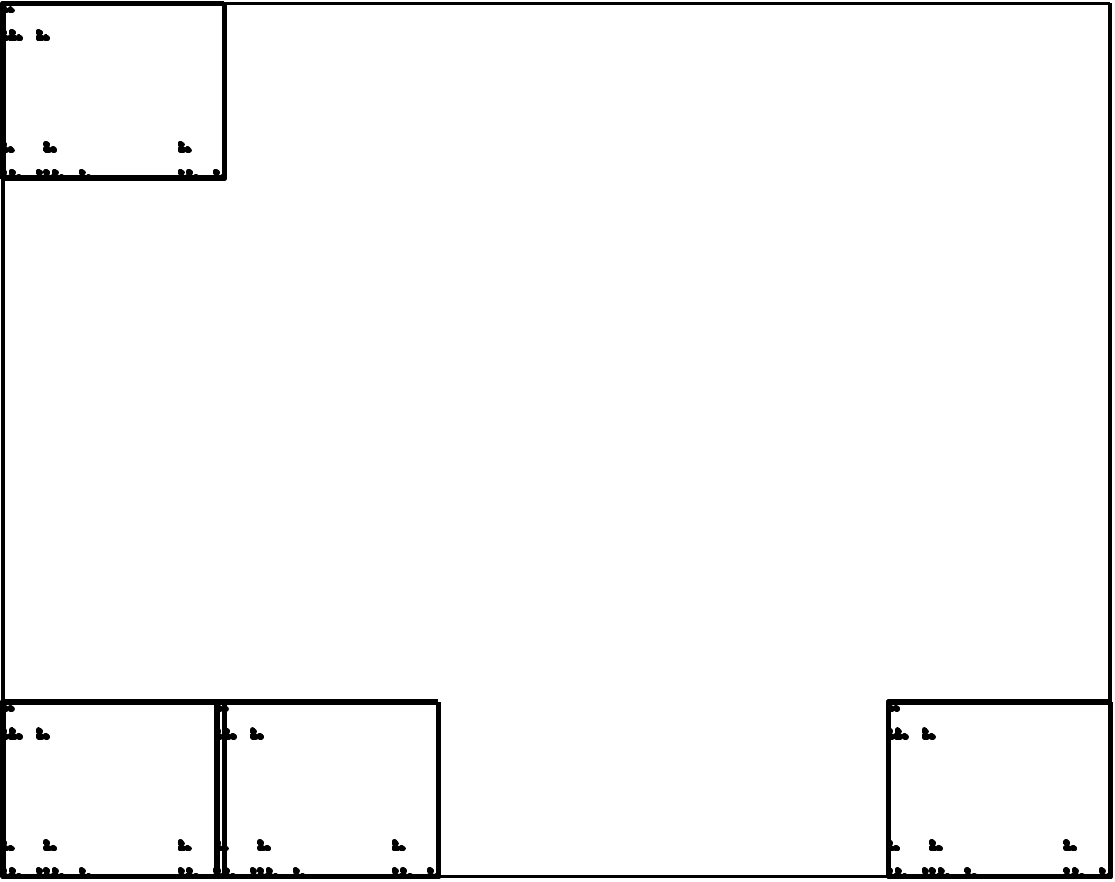}
\hspace{2cm}
\includegraphics[height=4cm, width=4cm]{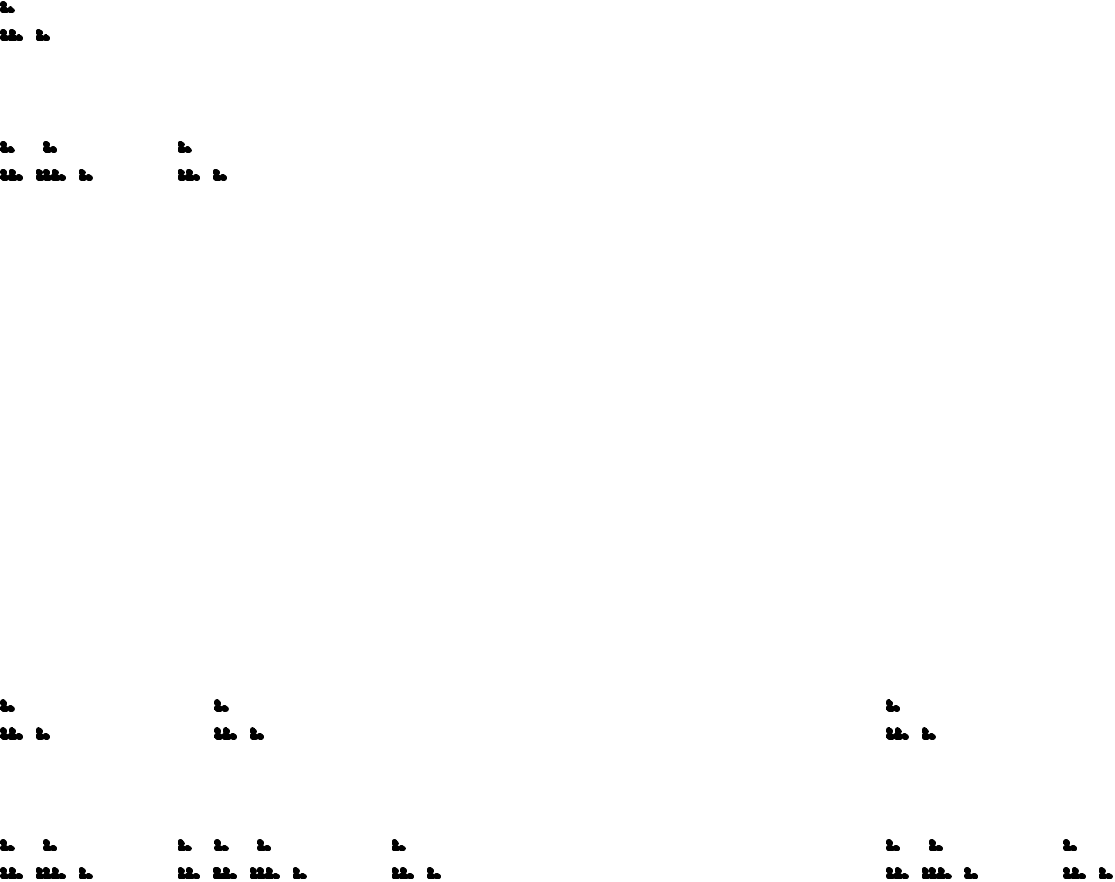}
}
\\
\subfigure[The first step in the construction and the self-similar set $F_2$.]{
\label{fig:2-2}
\includegraphics[height=4cm, width=4cm]{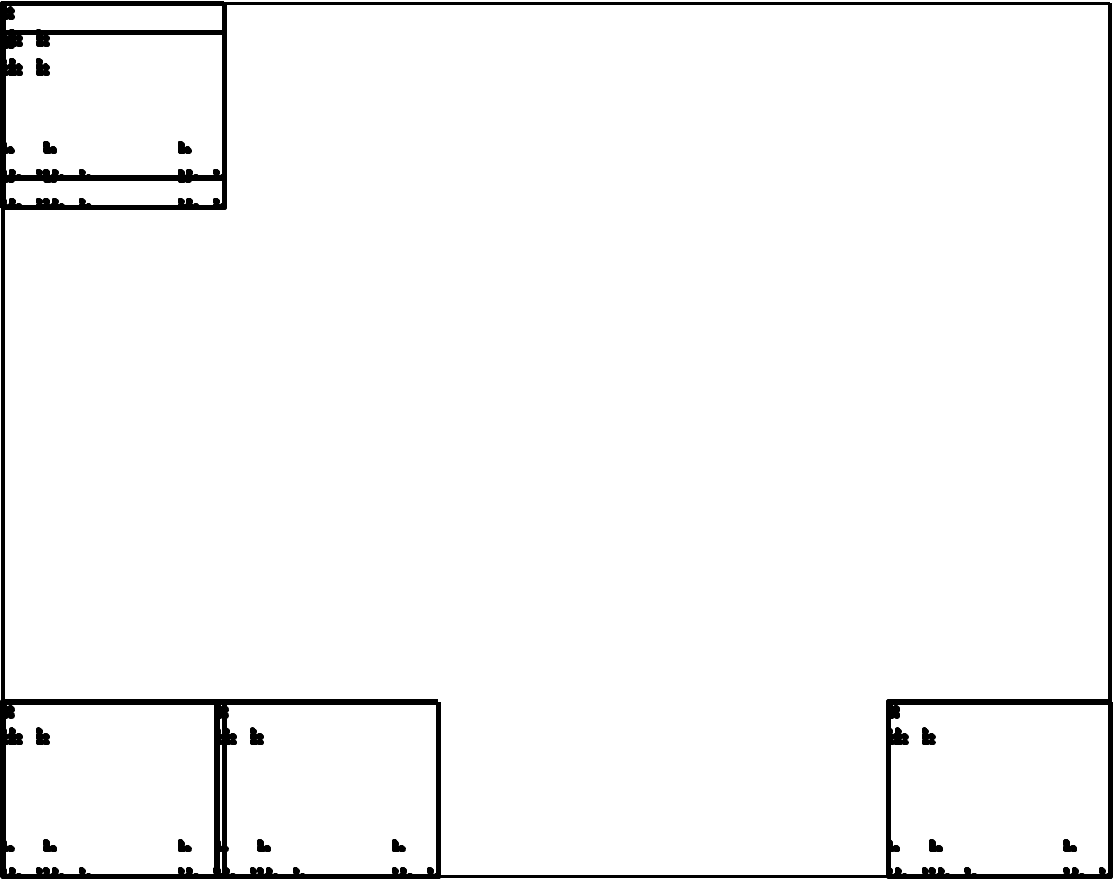}
\hspace{2cm} 
\includegraphics[height=4cm, width=4cm]{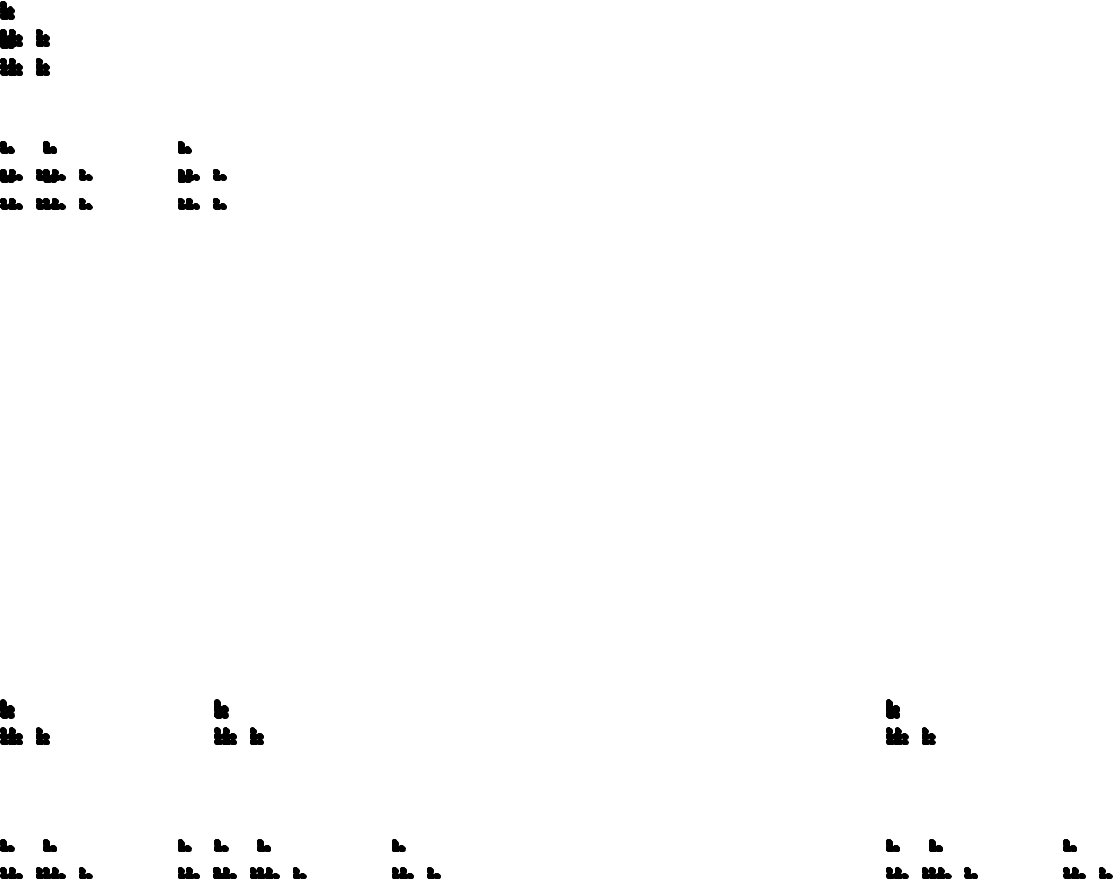}
}
\\
\subfigure[The first step in the construction and the self-similar set $F_3$.]{
\label{fig:2-3}
\includegraphics[height=4cm, width=4cm]{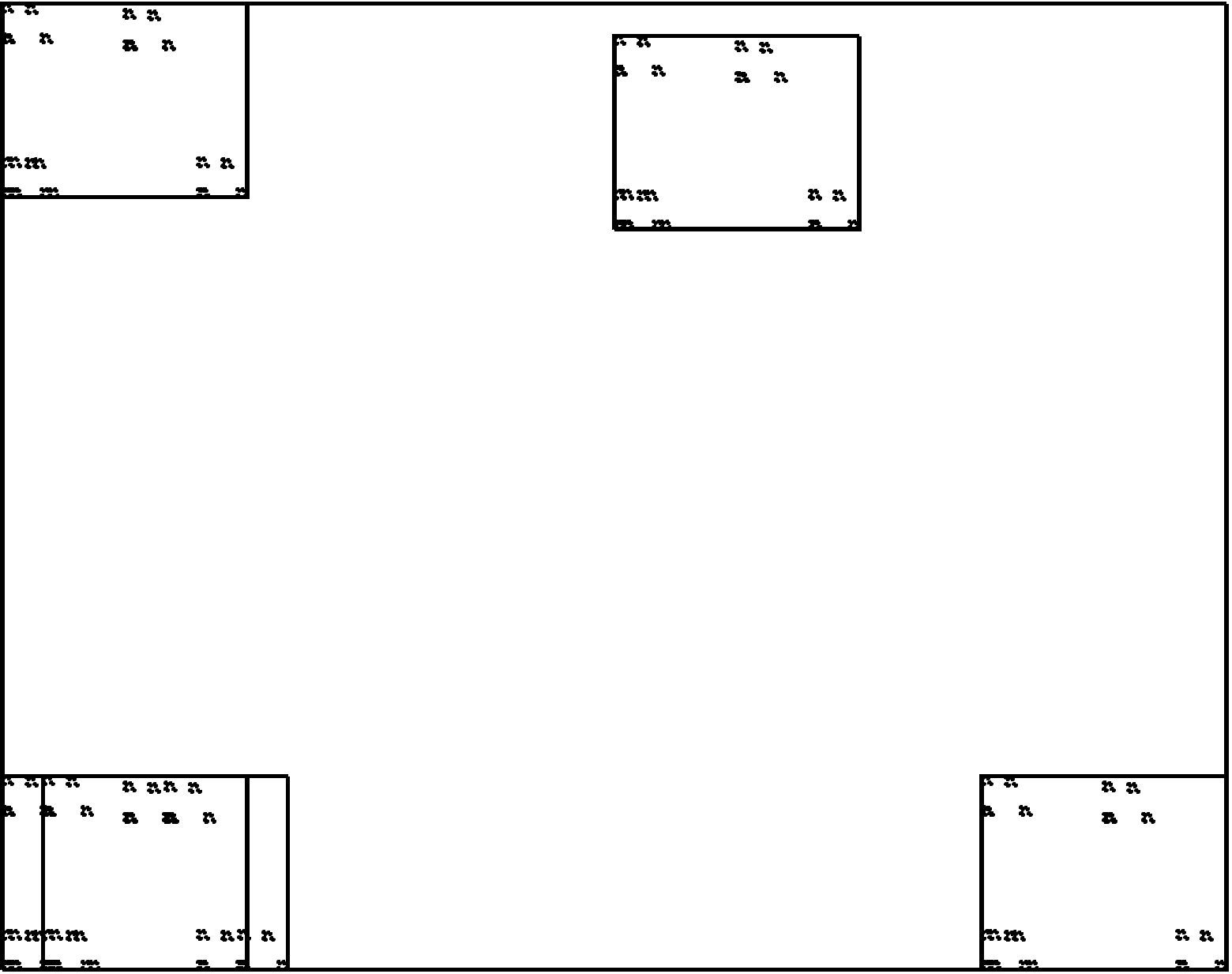}
\hspace{2cm} 
\includegraphics[height=3.9cm, width=4cm]{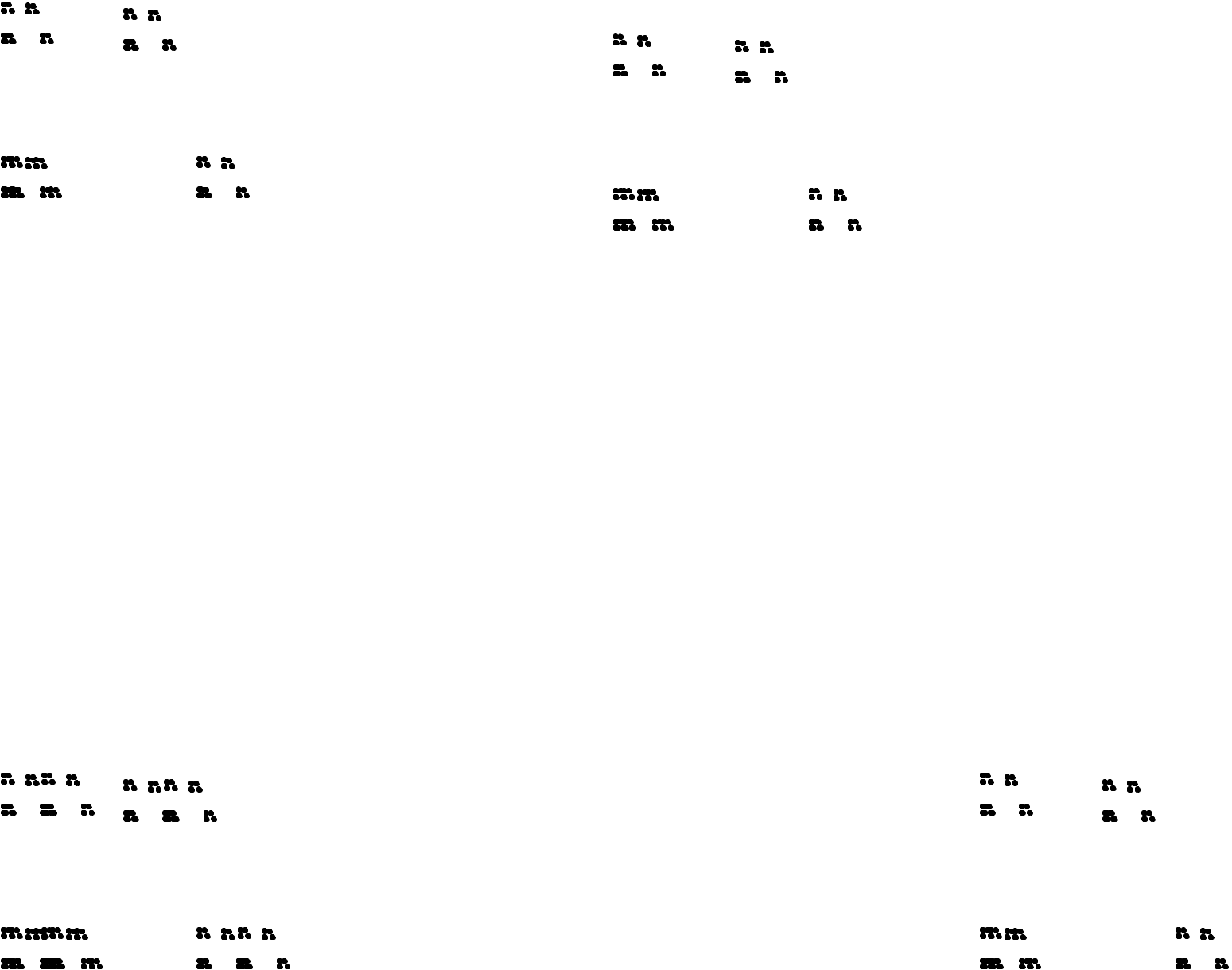}
}
\caption{}
\end{center}
\end{figure}

\section{Assouad dimension of overlapping graph directed self-similar sets}\label{section:graph}

Graph directed self-similar sets are a generalization of self-similar sets and were introduced by Mauldin and Williams in \cite{MW98}. Our results can be generalized to this setting.  

Let $\Gamma=(\mathcal E, \mathcal V)$ be a directed graph, that is, $\mathcal V=\{1,\ldots,q\}$ is a set of vertices and $\mathcal E$ is a finite set of directed edges, with initial an final points in the set $\mathcal V$, and such that for any $i\in\mathcal V$ there is an edge $\e\in\mathcal E$ starting from $i$. Let $\mathcal E_{i,j}\subset \mathcal E$ be the set of edges from $i$ to $j$.

For each edge $\e\in\mathcal E$, let $S_\e:\rr^d\to\rr^d$ be a contracting similarity, that has the form $S_\e=r_\e O_\e x+d_\e$, with $0<r_\e<1$, $O_\e$ an orthogonal matrix and $d_\e\in\rr^d$. Then, there is a unique $q$-tuple of nonempty compact sets such that
$$F_i=\bigcup_{j=1}^q\bigcup_{\e\in\mathcal E_{i,j}}S_\e(F_j).$$ 
The family $\mathcal S=\{S_\e : \e\in\mathcal E\}$ is a graph directed iterated function system of similarities (GDIFS) and $\{F_1,\ldots, F_q\}$ is the corresponding family of graph directed self-similar sets.

Let $\mathcal E_{i,j}^k$ be the set of finite sequences of edges $(\e_1,\ldots, \e_k)$ that form a path from $i$ to $j$, and let $\mathcal E_{i,j}^\ast= \bigcup_{k=1}^\infty \mathcal E_{i,j}^k$. 
We assume that $\Gamma$ is strongly connected, which means that for any pair of vertices $i,j\in\mathcal V$, the set $\mathcal E_{i,j}^k$ is nonempty for some $k$, which may depend on $i$ and $j$. In this case, the GDIFS $\mathcal S$ is also called strongly connected. This is a transitivity condition that ensures the same behavior for each of the sets $F_i$. In fact, under this assumption there is coincidence of the dimensions:
$$\dim_H F_i=\dim_H F_j=\dim_B F_j \ \ \ \textrm{for every } i,j\in\mathcal V;$$ 
this follows using implicit methods as in the proof of \cite[Corollary 3.5]{Fal}.

The weak separation property for GDIFS was defined by Das and Edgar in \cite{DE05}, where they adapted the many equivalent definitions given in \cite{Ze}. For $i,j\in\mathcal V$, let 
$$\mathcal F_{i,j}=\{S_{\e}^{-1}\circ S_{\ff}:\e,\ff\in\mathcal E_{i,j}^\ast\}.$$

Assume $F_i\Subset\rr^d$ for some $i\in\mathcal V$ (equivalently for all $i$, by strongly connectedness).  We say $\mathcal S=\{S_\e: \e\in\mathcal E\}$ satisfies the graph directed weak separation property  (GDWSP) if, for some (equivalently for all) $i\in\mathcal V$, the identity is an isolated point of $\mathcal F_{i,i}$ in the topology of pointwise convergence. 

\begin{definition}
Let $\Phi_k=S_{\e_k}^{-1}\circ S_{\ff_k}-I$. If the GDWSP is not satisfied, there are $\e_k, \ff_k\in\mathcal E_{i,i}^\ast$ such that $0<\|\Phi_k\|_\infty\to0$ as $k\to\infty$. We say that $\omega\in S^{d-1}$ is an $i$-overlapping direction for $\mathcal S$ if there are $a\in F_i$ and $\rho>0$ such that  $\omega=\lim_{k\to\infty}\Phi_k(a)/\|\Phi_k(a)\|$ and 
\begin{equation}\label{p}
\rho\|D\Phi_{k}\|_{\textrm{op}}\le\|\Phi_k(a_k)\|,
\end{equation} 
where $a_k$ is any point in $\textrm{argmin}\{\|\Phi_k(x)\|:x\in B(a,\rho)\}$.  Denote by $V_{\mathcal S, i}$ the set of scalar multiples of $i$-overlapping directions. 
\end{definition}

Observe that if the GDWSP is not satisfied, then $V_{\mathcal S, i}\neq\emptyset$ because of Lemma \ref{lemmaFraser}. 
The extension of the results from the self-similar case are summarized below.

\begin{theorem}
Let $\mathcal S$ be a strongly connected graph directed iterated function system with attractor $\{F_1,\ldots, F_q\}$. Assume $F_i\Subset \rr^d$.
\begin{enumerate}[1)]
\item If $\mathcal S$ satisfies the GDWSP, then for all $i\in\mathcal V$ we have $\hau^s(F_i)>0$ and $\dim_HF_i=\dim_AF_i$.
\item If $\mathcal S$ does not verify the GDWSP, then for each $i\in\mathcal V$ there is a weak tangent of $F_i$ containing $W_i+F_i$, where $W_i$ is the parallelotope associated to a maximal subset of linearly independent $i$-overlapping directions for $\mathcal S$. Moreover, $V_{\mathcal S,i}$ is a vector space, and given any vector subspace $V$ of $V_{\mathcal S,i}$, we have
$$\dim_A F_i=\dim V+\dim_A\pi_{V^\perp}F_i.$$
Moreover, $\dim V_{\mathcal S,i}<\dim_A F_i$ whenever $dim V_{\mathcal S,i}<d$.

Also, 
\begin{equation}\label{gd}
\dim V_{\mathcal S,i}=\dim V_{\mathcal S,j}
\end{equation} 
for $i,j\in \mathcal V$.
\end{enumerate} 
\end{theorem}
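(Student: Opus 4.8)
The plan is to reduce the graph‑directed theorem to the self‑similar one by rerunning its proofs over the alphabet of cycles based at a fixed vertex, and then to handle the genuinely new statement~(\ref{gd}) separately using strong connectedness. Part~1) is obtained by transcribing the proof of \cite[Theorem~1.3]{FHOR} (and the positivity part of \cite[Section~3]{FF}) into the graph‑directed language of \cite{DE05}: under the GDWSP the combinatorial estimates that force $\dim_A F_i\le\dim_H F_i$ survive once words in $\mathcal I^\ast$ are replaced by paths in $\mathcal E_{i,i}^\ast$, and since strong connectedness gives $\dim_H F_i=\dim_H F_j$ for all $i,j$, it suffices to argue at a single vertex. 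So the real content is part~2).

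Fix a vertex $i$ and suppose $\mathcal S$ fails the GDWSP. The structural fact driving everything is that cycles based at $i$ concatenate to cycles based at $i$ and satisfy $S_{\e\ff}(F_i)\subset S_\e(F_i)\subset F_i$ for $\e,\ff\in\mathcal E_{i,i}^\ast$. Hence: (i) the construction in the proof of Theorem~\ref{teotangent} goes through verbatim with $\mathcal I^\ast$ replaced by $\mathcal E_{i,i}^\ast$ — the maps $g_j,h_j$ of (\ref{gjhj}) are (inverse) compositions of $S_\beta$ for $\beta$ a cycle at $i$, the cone estimates (\ref{cone}) are unaffected, and $c_\ast$ is still a positive minimum over a finite set of cycles — so there is a weak tangent of $F_i$ containing $W_i+F_i$; (ii) the argument of Theorem~\ref{teovec} on this tangent shows that every unit vector in the span of the $i$‑overlapping directions is itself an $i$‑overlapping direction (so $V_{\mathcal S,i}$ is a linear subspace) and that $V_{\mathcal S,i}$ is invariant under the compact group $G_i:=\overline{\{O_\e:\e\in\mathcal E_{i,i}^\ast\}}$ (a closed sub‑semigroup of a compact group is a subgroup), the graph‑directed Lemma~\ref{lemmaFraser} being literally the same statement; (iii) the proof of Theorem~\ref{paththeorem} then yields $\dim_A F_i=\dim V+\dim_A\pi_{V^\perp}F_i$ for every $V\subset V_{\mathcal S,i}$, together with the strict inequality when $\dim V_{\mathcal S,i}<d$, provided one adapts Lemma~\ref{lemmaunifperf} by fixing in advance, for each vertex $j$, a $(d+1)$‑tuple of points of $F_j$ spanning $\rr^d$ (possible by strong connectedness) and running the same descent while allowing the vertex to vary.

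It remains to prove~(\ref{gd}). The plan is to transport $i$‑overlapping directions to $j$ by conjugating cycles along a connecting path. Choose $\tau\in\mathcal E_{i,j}^\ast$ and $\sigma\in\mathcal E_{j,i}^\ast$; for cycles $\e_k,\ff_k\in\mathcal E_{i,i}^\ast$ the words $\sigma\e_k\tau,\sigma\ff_k\tau$ are cycles at $j$, and a direct computation gives, with $\Phi_k=S_{\e_k}^{-1}S_{\ff_k}-I$ and $\Psi_k=S_{\sigma\e_k\tau}^{-1}S_{\sigma\ff_k\tau}-I$, the identity $\Psi_k=r_\tau^{-1}\,O_\tau^{-1}\circ\Phi_k\circ S_\tau$. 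Given a unit vector $\omega\in V_{\mathcal S,i}$, by step~(ii) — in the sharper form produced by the tangent construction, cf.\ the proof of Theorem~\ref{teovec} — we may pick $\e_k,\ff_k$ with $0<\|\Phi_k\|_\infty\to0$, with (\ref{boundderivative1}) valid, and with $\Phi_k(x)/\|\Phi_k(x)\|\to\omega$ for \emph{every} $x\in F_i$. Since $S_\tau(F_j)\subset F_i$ this gives $0<\|\Psi_k\|_\infty\to0$ and $\Psi_k(x)/\|\Psi_k(x)\|\to O_\tau^{-1}\omega$ for every $x\in F_j$; applying Lemma~\ref{lemmaFraser} to $\Psi_k$ with $\Gamma:=F_j\Subset\rr^d$ supplies $\rho>0$, a point $b\in F_j$ and a subsequence along which (\ref{p}) holds at $b$, and along it $\Psi_k(b)/\|\Psi_k(b)\|\to O_\tau^{-1}\omega$. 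Thus $O_\tau^{-1}\omega$ is a $j$‑overlapping direction, so $O_\tau^{-1}(V_{\mathcal S,i})\subseteq V_{\mathcal S,j}$ and $\dim V_{\mathcal S,i}\le\dim V_{\mathcal S,j}$; the opposite inequality follows by symmetry, using $\sigma$ in place of $\tau$.

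The main obstacle is the passage, in the last paragraph, from convergence of $\Phi_k(x)/\|\Phi_k(x)\|$ at the single base point $a^i$ to convergence at \emph{all} $x\in F_i$: one cannot simply conjugate the overlapping direction at $a^i$, because $a^i$ need not lie in $S_\tau(F_j)$, so no point of $F_j$ is mapped near it. This is exactly why step~(ii) (which rests on the tangent of step~(i)) must be in place before the transport argument for~(\ref{gd}) can run, so the order of the proof matters. A secondary, purely routine, point is to check that restricting the alphabet to cycles at a fixed vertex breaks none of the concatenations in the construction of Theorem~\ref{teotangent} and leaves all the relevant constants finite.
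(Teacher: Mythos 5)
Your proposal is correct and follows essentially the same route as the paper: parts 1)--2) are obtained by rerunning the proofs of Theorems \ref{teotangent}, \ref{teovec} and \ref{paththeorem} over cycles in $\mathcal E_{i,i}^\ast$ with the group $G_i=\overline{\{O_\e:\e\in\mathcal E_{i,i}^\ast\}}$, and (\ref{gd}) is proved by conjugating cycles along connecting paths, using the strengthened convergence $\Phi_k(x)/\|\Phi_k(x)\|\to\omega$ for all $x\in F_i$ (from the Theorem \ref{teovec} argument) together with Lemma \ref{lemmaFraser}. The only cosmetic difference is that you apply Lemma \ref{lemmaFraser} directly to $\Psi_k$ with $\Gamma=F_j$, whereas the paper applies it to $\Phi_k$ with $\Gamma=S_{\h}(F_j)$ and transfers the technical condition through the conjugation identity, as in (\ref{pp}); the two are equivalent.
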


\begin{proof}
Part $1)$ is stated only for completeness; its proof is the same as in the one dimensional case, which is given in \cite[Theorem 4.2 (1)]{FO}.
 
For $2)$, a straightforward adaptation of the proofs of Theorems \ref{teotangent}, \ref{teovec} and \ref{paththeorem} show the assertions, with the exception of the last one; note that, when constructing the weak tangent, for each $i\in\mathcal V$ we have to consider the group $G_i=\overline{\{O_\e: \e\in\mathcal E_{i,i}^\ast\}}$. 

For the last assertion, given $i,j\in\mathcal V$, fix $\g\in\mathcal E_{j,i}^\ast$ and $\mathbf{h}\in\mathcal{E}^\ast_{i,j}$ and let $v$ be an $i$-overlapping direction. Firstly we show that $O_{\mathbf{h}}^{-1} v$ is a $j$-overlapping direction. In fact, let $\e_k, \ff_k\in\mathcal E_{i,i}^\ast$  be such that $\|\Phi_k\|\to0$ and $\Phi_k(a)/\|\Phi_k(a)\|\to v$ for some $a\in F_i$.  Then, $\tilde\e_k:=\g\e_k\h$ and $\tilde\ff_k:=\g\ff_k\h$ belong to $\mathcal E_{j,j}^\ast$ and
$$\tilde\Phi_k:=S_{\tilde\e_k}^{-1}\circ S_{\tilde\ff_k}-I=c_{\mathbf{h}}^{-1}O_{\mathbf{h}}^{-1}(\Phi_k\circ S_{\mathbf h}).$$
We assume further that $\Phi_k(x)/\|\Phi_k(x)\|\to v$ for any $x\in F_i$, which is justified as in the proof of Theorem \ref{teovec}. Then, since $S_{\mathbf{h}}(F_j)\Subset F_i$, Lemma \ref{lemmaFraser} allows us to assume $a=S_{\mathbf{h}}(z)$ for some $z\in F_j$. 
Then, $0<\|\tilde\Phi_k\|\to0$ and $\tilde\Phi_k(z)/\|\tilde\Phi_k(z)\|\to O_{\mathbf{h}}^{-1}v$. Also, (\ref{p}) follows as in (\ref{pp}), hence $O_{\mathbf{h}}^{-1}v$ is a $j$-overlapping direction. 

Finally, since the argument is symmetric in $i$ and $j$, then we get the inclusion of the spaces 
$$V_{\mathcal S,i}\subset O_{\mathbf{h}}V_{\mathcal S,j}\subset O_{\mathbf h}O_{\mathbf g}V_{\mathcal S,i},$$ so (\ref{gd}) holds.
\end{proof}

\section{Structure of overlapping/non overlapping self-similar sets}\label{section:weaknoweak}

Now we concentrate on the topological structure of the overlapping self-similar sets which also are attractors of non overlapping IFS. For example, the unit cube in $\rr^d$ is the attractor of an IFS $\mathcal S$ satisfying the weak separation condition (indeed, the open set condition) consisting of $2^d$ obvious similarities with contraction $1/2$, and besides, if we add to $\mathcal S$ a similarity with contraction ratio $1/2$ with an appropriate irrational translation, we have that the new IFS does not verify the weak separation property. Another example is the Cartesian product $Q\times C$, where $Q$ is the unit cube in $\rr^{n}$ for some $1\le n<d$, and $C\Subset\rr^{d-n}$ is an homogeneous self-similar set with contraction ration $1/K$ for some integer $K$ and satisfying the weak separation property. These examples share the property of having non empty interior (relative to $\rr^{n}$ in the last example), and below we show that this situation holds in general but imposing an extra separation condition.

Firstly, observe that for such self-similar sets in $\rr$, it is always the case that they have non empty interior, as shows the following proposition.

\begin{proposition}
Let $F\Subset\rr$ be a self-similar set that is both the attractor of an IFS satisfying the weak separation property and of an IFS not satisfying the weak separation property. Then $F$ has non empty interior.
\end{proposition}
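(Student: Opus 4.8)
The plan is to apply the machinery developed earlier in the paper to the one-dimensional situation and then combine it with the Farkas--Fraser characterization. First I would observe that since $F\Subset\rr$, the set $F$ is not a single point, so $\dim_H F>0$ and $F$ is the attractor of an IFS $\mathcal S$ not satisfying the WSP; by the remark following the definition of overlapping directions (or Lemma \ref{lemmaFraser}), $\dim V_{\mathcal S}\ge 1$, and in $\rr$ this forces $V_{\mathcal S}=\rr$. Then Theorem \ref{paththeorem} with $V=V_{\mathcal S}=\rr$ gives
\begin{equation*}
\dim_A F=\dim V+\dim_A\pi_{V^\perp}(F)=1+\dim_A\pi_{\{0\}}(F)=1,
\end{equation*}
so $\dim_A F=1$. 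Alternatively, and more directly, Theorem \ref{teotangent} produces a weak tangent of $F$ containing $W+F$ where $W$ is a unit segment in the single overlapping direction, hence containing a genuine interval; this already shows some weak tangent of $F$ has nonempty interior in $\rr$, which is the morally relevant fact, but to conclude about $F$ itself I still need the separation hypothesis.

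Next I would use that $F$ is \emph{also} the attractor of an IFS satisfying the WSP. The point is that for self-similar sets in $\rr$ the dichotomy of Fraser--Henderson--Olson--Robinson (the Theorem quoted in the introduction from \cite{FHOR}) gives, for the WSP-generating IFS, that $\dim_A F=\dim_H F$. Combined with $\dim_A F=1$ from the previous paragraph, we get $\dim_H F=1$. Now invoke the Farkas--Fraser equivalences recalled in the introduction: since $F$ satisfies the WSP (via the good IFS) and $\dim_H F=1$, the extension of Corollary 3.1 of \cite{FF} quoted in the excerpt (``also when $\dim_H F=1$ we have $(i)\Leftrightarrow(i')\Leftrightarrow(ii)\Leftrightarrow(iii)$'') yields $\hau^1(F)>0$, i.e. $F$ has positive Lebesgue measure. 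A self-similar set in $\rr$ of positive Lebesgue measure has nonempty interior: this is classical (it follows, e.g., from the Lebesgue density theorem together with the self-similar structure — at a density point one can blow up a small interval by a similarity from the semigroup to see $F$ is dense in, hence equal to, a neighborhood; see also results on when self-similar sets in $\rr$ contain intervals). So $F$ has nonempty interior.

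I would present the argument in this order: (1) the bad IFS together with Theorem \ref{paththeorem} (or Theorem \ref{teotangent}) forces $\dim_A F=1$; (2) the good IFS together with the $\rr$-dichotomy forces $\dim_H F=1$; (3) the good IFS (WSP) together with $\dim_H F=1$ and the Farkas--Fraser equivalences forces $\hau^1(F)>0$; (4) positive Lebesgue measure plus self-similarity in $\rr$ gives nonempty interior. The main obstacle, and the only step requiring genuine care rather than citation, is step (4): deducing nonempty interior from positive measure. One clean way is to note that by \cite{FF} (or \cite{FHOR}) the WSP plus $\hau^1(F)>0$ forces $F$ to be Ahlfors $1$-regular, and an Ahlfors $1$-regular self-similar subset of $\rr$ with positive Lebesgue measure must be (up to the structure of the IFS) a finite union of intervals — this can be seen by iterating the self-similar identity and using that the measure-theoretic overlaps are negligible, so that Lebesgue measure restricted to $F$ is itself the natural self-similar measure, which for a set of positive measure with the WSP is comparable to Lebesgue measure on $F$; then a density-point blow-up argument shows $F$ contains an interval. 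Since the statement is one-dimensional and all the heavy lifting has been done in \cite{FHOR}, \cite{FF} and the earlier sections, I expect the proof in the paper to be short and to route almost entirely through these citations.
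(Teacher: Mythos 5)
Your steps (1)--(3) are sound and essentially parallel the paper's: the non-WSP IFS gives $\dim_A F\ge 1$ (the paper simply cites \cite[Theorem 3.1]{FHOR}; invoking Theorem \ref{paththeorem} is overkill when $d=1$, since $\dim_A F\le 1$ is trivial), the WSP IFS gives $\dim_H F=\dim_A F=1$ by the FHOR dichotomy, and the implication WSP $\Rightarrow\hau^{\dim_H F}(F)>0$ indeed holds with no restriction on the dimension (you do not need the new ``$\dim_H F=1$'' corollary for that direction). The genuine gap is your step (4). It is \emph{not} classical that a self-similar subset of $\rr$ with positive Lebesgue measure has nonempty interior; without a separation hypothesis this is a well-known open problem, and in the plane it is outright false --- the paper itself cites \cite{CJPPS} for a positive-measure self-similar set with empty interior. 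The density-point blow-up you sketch does not work as stated: self-similarity gives $S_\alpha(F)\subset F$, so expanding by $S_\alpha^{-1}$ only yields $F\subset S_\alpha^{-1}(F)$, and high Lebesgue density of $F$ in a small ball does not transfer to a single cylinder $S_\alpha(F)$ filling most of its convex hull unless the overlaps are controlled --- which is exactly where the weak separation property must enter structurally. Your Ahlfors-regularity variant does not close the gap either: a fat (Smith--Volterra) Cantor set is Ahlfors $1$-regular with positive measure and empty interior, so regularity plus positive measure alone cannot produce an interval.

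The correct way to finish --- and what the paper does --- is to quote Zerner: the attractor in $\rr$ of an IFS satisfying the WSP with $\dim_H F=1$ has nonempty interior (\cite[Theorem 3]{Ze}). The paper's whole proof is three citations: $\dim_A F=1$ from \cite[Theorem 3.1]{FHOR}, $\dim_H F=\dim_A F$ from \cite[Theorem 2.1]{FHOR}, then Zerner. With that citation in place of your step (4), your argument becomes a correct (slightly longer) version of the paper's proof; as written, step (4) asserts essentially the content of Zerner's theorem without proving it.
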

\begin{proof}
Since $F$ is the attractor of an IFS not satisfying the WSP, then $\dim_A F=1$ by \cite[Theorem 3.1]{FHOR}. Also, since $F$ is the attractor of an IFS satisfying the weak separation property, we get $\dim_H F=\dim_A F=1$ by \cite[Theorem 2.1]{FHOR}. Then $F$ has non empty interior since it is the attractor in $\rr$ of an IFS satisfying the WSP with $\dim_H F=1$; see \cite[Theorem 3]{Ze}.
\end{proof}

Next we present an extension of the above result to the higher dimensional case, but we need to impose a stronger separation condition. Recall that an IFS $\{R_1,\ldots, R_{m'}\}$ verifies the open set condition (OSC) if there is a non empty bounded open set $U$ such that 
\begin{equation}\label{OSC}
\bigcup_{i=1}^{m'}R_i(U)\subset U, 
\end{equation}
with disjoint union. It is well known that the OSC is equivalent to the strong open set condition (SOSC), which further requires that the open set intersects the attractor; see \cite{S94}.

\begin{theorem}\label{Theorem:product-structure}
Let $F\Subset\rr^d$ be a self-similar set that is both the attractor of an IFS satisfying the open set condition and of an IFS not satisfying the weak separation property. Let $p=\dim_{O} F$. Then, there is a non trivial $p$-dimensional cube $Q\subset V_{\mathcal O}$ and a set $C\subset V_{{\mathcal O}}^\perp$, with $\dim_A C=\dim_A \pi_{V_{\mathcal O}^\perp} F$, such that $Q\times C\subset F$.
\end{theorem}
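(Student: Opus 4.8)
The plan is to combine the weak tangent of Theorem~\ref{teotangent} with the rigidity supplied by the open set condition, transplanting the cube contained in that tangent back into $F$ along clean cylinders of the non‑overlapping system.

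\emph{Ingredients.} Let $\mathcal R$ be an IFS for $F$ satisfying the OSC, equivalently the strong OSC, with bounded open set $U$, $U\cap F\ne\emptyset$; let $\mathcal S$ be an IFS for $F$ with $V_{\mathcal S}=V_{\mathcal O}$, write $V:=V_{\mathcal O}$ and $p=\dim V$, and identify $\rr^d$ with $V\times V^\perp$. I will use: (i) since $\mathcal R$ satisfies the OSC, $F$ is Ahlfors $s$-regular with $s=\dim_H F=\dim_A F$; (ii) $\dim_A\pi_{V^\perp}(F)=s-p$ by Theorem~\ref{paththeorem}; (iii) by Corollary~\ref{Coro:invariance} the subspaces $V$ and $V^\perp$ are invariant under the compact group $G_{\mathcal R}=\overline{\{O_\alpha:\alpha\in\mathcal R^\ast\}}$, so $\pi_{V^\perp}$ conjugates every map of $\mathcal R$ to a similarity of $V^\perp$; in particular $\pi_{V^\perp}(F)$ is a self-similar set, each $\pi_{V^\perp}(R_\sigma(F))$ is a similarity copy of it, and $\dim_A\pi_{V^\perp}(R_\sigma(F))=s-p$ (also by Lemma~\ref{lemma:projection}); (iv) by Theorem~\ref{teovec} we may take the overlapping directions $\omega^1,\dots,\omega^p$ of $\mathcal S$ to be an orthonormal basis of $V$, so the parallelotope $W$ of Theorem~\ref{teotangent} is a genuine unit $p$-cube contained in $V$.

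\emph{Step 1: a product shadowed inside $F$ at small scales.} Fix $\sigma\in\mathcal R^\ast$ with $r_\sigma\,\diam\pi_V(F)\le\tfrac12$. By Theorem~\ref{teotangent} there is a weak tangent with $\hat F\supseteq W+F\supseteq W+R_\sigma(F)$, and, just as in the proof of Theorem~\ref{paththeorem}, the sum set $W+R_\sigma(F)$ contains a product $Q_0\times\pi_{V^\perp}(R_\sigma(F))$ with $Q_0\subset V$ a $p$-cube of side $\ge\tfrac12$. The construction of $\hat F$ in Section~\ref{section:proofteotangent} gives more: there are similarities $g_n$ of ratio $d_n$ and sets $E_n=\bigcup_j S_{\beta(j)}(F)\subset F$ (finite unions of $\mathcal S$-cylinders) with $d_{\mathcal H}(g_n(E_n),\,W+F)\to0$; since $g_n^{-1}$ has orthogonal part in $G_{\mathcal S}$, which leaves $V$ and $V^\perp$ invariant (Theorem~\ref{teovec}), the set $Q_n\times C_n:=g_n^{-1}\big(Q_0\times\pi_{V^\perp}(R_\sigma(F))\big)$ is again a product over $V\times V^\perp$, with $Q_n\subset V$ a $p$-cube of side $\asymp 1/d_n$, $C_n\subset V^\perp$ a similarity copy of $\pi_{V^\perp}(R_\sigma(F))$ of diameter $\asymp 1/d_n$, $\dim_A C_n=s-p$, and $Q_n\times C_n\subset[F]_{\tau_n}$ with $\tau_n d_n\to0$.

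\emph{Step 2: transplanting to a fixed scale and taking a limit.} Fix a clean point $z\in F\cap U$ and $\delta>0$ with $\overline{B(z,\delta)}\subset U$. The strong OSC yields, for every $\alpha\in\mathcal R^\ast$,
\[
F\cap B(R_\alpha z,\,r_\alpha\delta)=R_\alpha\big(F\cap B(z,\delta)\big),
\]
so, with $F^\ast=F\cap\overline{B(z,\delta)}$, the blow-up $y\mapsto r_\alpha^{-1}(y-R_\alpha z)$ carries $F\cap B(R_\alpha z,r_\alpha\delta)$ onto $O_\alpha(F^\ast-z)\cap B(0,\delta)$. For each $n$ I would choose a clean cylinder $\alpha_n$ of ratio $r_{\alpha_n}\asymp(1/d_n)/\delta$ whose clean ball contains $Q_n\times C_n$ — peeling a suitable prefix off a cylinder address of a point of $F$ lying close to $Q_n\times C_n$ — and rescale by $r_{\alpha_n}^{-1}$. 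This turns the shadowing into: $O_{\alpha_n}(F^\ast-z)\cap\overline{B(0,\delta)}$ lies within Hausdorff distance $\eta_n\to0$ of a product $\tilde Q_n\times\tilde C_n\subset\overline{B(0,2\delta)}$, where $\tilde Q_n\subset V$ is a $p$-cube of side bounded below by a fixed multiple of $\delta$ and $\tilde C_n\subset V^\perp$ is a similarity copy of $\pi_{V^\perp}(R_\sigma(F))$ of diameter $\asymp\delta$, with $\dim_A\tilde C_n=s-p$. Applying $O_{\alpha_n}^{-1}$ (which preserves $V$ and $V^\perp$ by Corollary~\ref{Coro:invariance}) and using the compactness of $G_{\mathcal R}$ and of the family of similarity copies of $\pi_{V^\perp}(R_\sigma(F))$ of diameter in a fixed compact interval, pass to a subsequence so that $O_{\alpha_n}^{-1}(\tilde Q_n\times\tilde C_n)\to Q'\times C'$ in the Hausdorff metric, with $Q'\subset V$ a $p$-cube of positive side length and $C'\subset V^\perp$ an honest similarity copy of $\pi_{V^\perp}(R_\sigma(F))$, so $\dim_A C'=s-p=\dim_A\pi_{V^\perp}(F)$. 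Since $O_{\alpha_n}^{-1}(\tilde Q_n\times\tilde C_n)\subset[F^\ast-z]_{\eta_n}$ and $\eta_n\to0$, the closedness of $F$ forces $Q'\times C'\subset F^\ast-z\subset F-z$; taking $Q:=Q'+\pi_V(z)$ and $C:=C'+\pi_{V^\perp}(z)$ finishes the proof.

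\emph{Main obstacle.} The delicate point is Step 2: the output scale $1/d_n$ of the non-WSP construction is a priori unrelated to the scales $r_\alpha\delta$ of the clean balls of $\mathcal R$ (whose radius is only a fixed fraction $\delta/\diam F$ of the diameter of the surrounding cylinder), and the systems $\mathcal S$ and $\mathcal R$ are not aligned; so one must split $\mathcal R$-words carefully, choosing the prefix so that the clean ball obtained has both the correct size and is correctly centred about a point of $F$ near $Q_n\times C_n$, while the relative error $\eta_n$ still tends to $0$. One must also keep $C'$ a genuine similarity copy of $\pi_{V^\perp}(R_\sigma(F))$ — not a Hausdorff limit of clipped portions — since this is what yields $\dim_A C'=s-p$ exactly (only $\le s-p$ is automatic from $Q'\times C'\subset F$ and Ahlfors regularity).
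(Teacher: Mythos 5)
Your overall strategy --- push the cube sitting in the weak tangent back into $F$ through a ``clean'' ball of the OSC system and pass to a Hausdorff limit --- is the same as the paper's, and your Step 1 and your final limiting argument (convergence of $R_{\alpha_n}^{-1}\circ S_{\beta_n}$ to a similarity preserving $V_{\mathcal O}$ and $V_{\mathcal O}^{\perp}$, closedness of $F$) agree with what the paper does. But the proof breaks down exactly at the point you yourself flag as the ``main obstacle'': the existence, for each $n$, of a word $\alpha_n$ with $r_{\alpha_n}\asymp (d_n\delta)^{-1}$ and $Q_n\times C_n\subset B(R_{\alpha_n}z,\,r_{\alpha_n}\delta)$. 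Peeling a prefix off the address of a point of $F$ near $Q_n\times C_n$ only produces a cylinder $R_{\alpha_n}(F)$ of the right diameter containing that one point; the clean ball sits at the prescribed location $R_{\alpha_n}z$ and has radius only the fixed fraction $\delta/\diam F$ of that cylinder's diameter, so there is no reason it should swallow a prescribed set whose diameter is comparable to that radius. In general it will not: at any given scale the clean balls occupy only a portion of $F$, while the product you built is placed by the overlapping ($\mathcal S$-)construction with no regard to the $\mathcal R$-coding. This is a genuine gap, not a technicality to be absorbed into constants.

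The paper closes it by reversing the order of operations: localize into a clean ball first, extract the product afterwards. One chooses the open set $U$ to be measure-full, $\nu(U)=\nu(F)$ for the natural measure $\nu$ (possible by Schief's theorem), and runs a pigeonhole over the bounded-multiplicity stopping family $\mathcal J_{A_n}$ at scale $\diam A_n$: since $\nu(U\setminus\Omega_{k_0})$ can be made small compared with $\nu(F)/(2C_2L)$, the copy $A_n=S_{\beta_n}(W_n(F))\subset F$ must meet $R_{\alpha_n}(\Omega_{k_0})$ for some $\alpha_n$ in the family, yielding a point $x_n\in A_n$ with $B(x_n,e_{\alpha_n}\eta)\subset R_{\alpha_n}(U)$ as in (\ref{c}). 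Crucially, the clean ball is only required to contain a definite sub-ball of $A_n$, not the whole product; the product is recovered only after passing to the limit, where the rescaled pieces converge to a set containing $(W+F)\cap B(z,\tilde c\eta/2)$, and Lemma \ref{lemma:prod} guarantees that every ball centred in $W+F$ contains a product $Q'\times C'$ of the required type. To repair your argument you need both ingredients: the measure-full open set with the pigeonhole, and a localized form of your Step 1 in the spirit of Lemma \ref{lemma:prod}. (Two minor points: the Ahlfors regularity invoked in your ingredient (i) is not needed anywhere; and the one-sided inclusion $F\cap R_\alpha(U)\subset R_\alpha(F)$ underlying your clean-ball identity does hold, by disjointness of the open cylinder sets, as in (\ref{contencion}).)
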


\begin{question}
It is possible to improve to weak separation property the hypothesis of open set condition? We believe this is the case.
\end{question}

Let $\{S_1,\ldots, S_m\}$ be an IFS for $F$ not satisfying the WSP and which also attains the overlapping dimension $p$ of $F$.  As in the proof of Theorem \ref{paththeorem}, we assume that the corresponding $p$-parallelotope $W$ is contained in $V_{\mathcal O}=\rr^p\times\{\mathbf{0}\}^{d-p}$. 

Before the proof we need two preliminary lemmas. The next lemma is stated for $V_{\mathcal O}$ but clearly is true for $V_{\mathcal S}$, for any IFS $\mathcal S$.

\begin{lemma}\label{lemma:projection}
For any $\alpha\in\mathcal{I}^\ast$ we have $\dim_A\pi_{V_{\mathcal O}^\perp}F_\alpha=\dim_A\pi_{V_{\mathcal O}^\perp}F$.
\end{lemma}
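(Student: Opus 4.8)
The plan is to show that $\pi_{V_{\mathcal O}^\perp}F_\alpha$ and $\pi_{V_{\mathcal O}^\perp}F$ have the same Assouad dimension by exhibiting each as (essentially) a similar copy of a piece of the other, using the fact that $V_{\mathcal O}$ is invariant under the orthogonal group $G=\overline{\{O_\gamma:\gamma\in\mathcal I^\ast\}}$ (Corollary \ref{Coro:invariance}). First I would note that the inequality $\dim_A\pi_{V_{\mathcal O}^\perp}F_\alpha\le\dim_A\pi_{V_{\mathcal O}^\perp}F$ is immediate from monotonicity of the Assouad dimension, since $F_\alpha\subset F$ and hence $\pi_{V_{\mathcal O}^\perp}F_\alpha\subset\pi_{V_{\mathcal O}^\perp}F$. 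So the content is the reverse inequality.

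For the reverse inequality, the key observation is that $\pi_{V_{\mathcal O}^\perp}\circ S_\alpha$ is, up to a translation, a similarity of $V_{\mathcal O}^\perp$ composed with $\pi_{V_{\mathcal O}^\perp}$. Indeed, writing $S_\alpha(x)=c_\alpha O_\alpha x+b_\alpha$, for any $x\in\rr^d$ we have $\pi_{V_{\mathcal O}^\perp}(S_\alpha(x))=c_\alpha\,\pi_{V_{\mathcal O}^\perp}(O_\alpha x)+\pi_{V_{\mathcal O}^\perp}(b_\alpha)$. Because $O_\alpha$ preserves $V_{\mathcal O}$ and hence also its orthogonal complement $V_{\mathcal O}^\perp$ (orthogonal maps preserve orthogonal complements of invariant subspaces), we have $\pi_{V_{\mathcal O}^\perp}(O_\alpha x)=O_\alpha(\pi_{V_{\mathcal O}^\perp}x)=(O_\alpha|_{V_{\mathcal O}^\perp})(\pi_{V_{\mathcal O}^\perp}x)$, where $O_\alpha|_{V_{\mathcal O}^\perp}$ is an orthogonal transformation of $V_{\mathcal O}^\perp$. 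Therefore $\pi_{V_{\mathcal O}^\perp}\circ S_\alpha=R_\alpha\circ\pi_{V_{\mathcal O}^\perp}$, where $R_\alpha:V_{\mathcal O}^\perp\to V_{\mathcal O}^\perp$ is the similarity $y\mapsto c_\alpha(O_\alpha|_{V_{\mathcal O}^\perp})y+\pi_{V_{\mathcal O}^\perp}(b_\alpha)$. Consequently $\pi_{V_{\mathcal O}^\perp}F_\alpha=\pi_{V_{\mathcal O}^\perp}(S_\alpha F)=R_\alpha(\pi_{V_{\mathcal O}^\perp}F)$, so $\pi_{V_{\mathcal O}^\perp}F_\alpha$ is a similar copy of $\pi_{V_{\mathcal O}^\perp}F$. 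Since the Assouad dimension is invariant under bi-Lipschitz maps, and in particular under similarities of $\rr^d$ (or of the ambient subspace $V_{\mathcal O}^\perp$), this gives $\dim_A\pi_{V_{\mathcal O}^\perp}F_\alpha=\dim_A\pi_{V_{\mathcal O}^\perp}F$, which is even stronger than the claimed inequality and closes the argument.

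I expect the main (minor) obstacle to be purely bookkeeping: one must be careful that $G$-invariance of $V_{\mathcal O}$ — which is what Corollary \ref{Coro:invariance} provides for an arbitrary IFS $\mathcal S$ for $F$ — indeed applies to the particular $O_\alpha$ appearing here, i.e. that the IFS whose words $\alpha$ we range over is one for which Corollary \ref{Coro:invariance} is stated; this is automatic since the corollary holds for \emph{any} IFS for $F$. One should also record the elementary linear-algebra fact that if an orthogonal map $O$ leaves a subspace $V$ invariant then it leaves $V^\perp$ invariant and commutes with $\pi_{V^\perp}$; this is standard and can be dispatched in one line. No delicate estimates are needed: the whole proof reduces to recognizing $\pi_{V_{\mathcal O}^\perp}F_\alpha$ as a similarity image of $\pi_{V_{\mathcal O}^\perp}F$.
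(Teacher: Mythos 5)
Your proof is correct and follows essentially the same route as the paper: both hinge on the invariance of $V_{\mathcal O}$ (hence of $V_{\mathcal O}^\perp$) under $O_\alpha$, which makes $\pi_{V_{\mathcal O}^\perp}\circ S_\alpha$ act as a similarity of ratio $c_\alpha$ on the projection. The paper phrases the conclusion via $r$-separated sets and the definition of Assouad dimension, while you package it as the identity $\pi_{V_{\mathcal O}^\perp}F_\alpha=R_\alpha(\pi_{V_{\mathcal O}^\perp}F)$ plus bi-Lipschitz invariance, which is just a cosmetic difference.
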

\begin{proof}
For easy of notation, let $V=V_{\mathcal O}$. For any $x\in\rr^d$, decompose $x=x_V+x_{V^\perp}$, with $x_V\in V$ and $x_{V^\perp}\in V^\perp$. Then, 
\begin{equation*}
S_\alpha(x)-S_\alpha(y)=c_\alpha O_\alpha(x-y)=c_\alpha O_\alpha(x_V-y_V)+c_\alpha O_\alpha(x_{V^\perp}-y_{V^\perp}).
\end{equation*} 
By Theorem \ref{teovec}, $V$ is invariant by $O_\alpha$ for any $\alpha\in\mathcal{I}^\ast$, and then it is easily seen that so is $V^\perp$. In consequence,
\begin{equation*}
\|\pi_{V^\perp} S_\alpha(x)-\pi_{V^\perp} S_\alpha(y)\|=c_\alpha\|x_{V^\perp}-y_{V^\perp}\|=c_\alpha\|\pi_{V^\perp}(x)-\pi_{V^\perp}(y)\|.
\end{equation*}
Then, any set of $r$-separated points in $\pi_{V^{\perp}}(F)$ (that is, any two points in the set are at distance at least $r$) is in one to one correspondence with a $c_\alpha r$-separated set in $\pi_{V^{\perp}}(F_\alpha)$, and consequently, it follows directly from the definition of the Assouad dimension that $\dim_A\pi_{V^{\perp}}(F)=\dim_A \pi_{V^{\perp}}(F)$. 
\end{proof}

\begin{lemma}\label{lemma:prod}
Given $r>0$ and $y\in W+F$, there are sets $Q$ and $C$ as in the statement of Theorem \ref{Theorem:product-structure} such that $Q\times C\subset (W+F)\cap B(y,r)$. 
\end{lemma}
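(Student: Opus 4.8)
The plan is to localise the set $W+F$ about $y$ and then reproduce, inside that localised piece, the product construction already used in the proof of Theorem~\ref{paththeorem} (compare (\ref{dos})--(\ref{tres})). By Theorem~\ref{teovec} every unit vector of $V_{\mathcal O}$ is an overlapping direction, so I may and will take the generating directions $\omega^1,\dots,\omega^p$ to be orthonormal, so that $W$ is a unit cube inside $V_{\mathcal O}$; write $V=V_{\mathcal O}$. Fix once and for all a representation $y=w+z$ with $w\in W$ and $z\in F$.

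The first step fixes two independent scales. Given $r$, choose a side length $s_0\in(0,1]$ with $s_0\sqrt p<r/2$, together with a subcube $Q_0\subset W$ of side $s_0$ whose centre lies within distance $s_0\sqrt p/2$ of $w$; such a $Q_0$ exists because $w$ lies in the closed cube $W$ (take, coordinatewise, the side $[a_i,a_i+s_0]\subset[0,1]$ with $a_i+s_0/2$ as close to $w_i$ as possible). Then, taking the word length large, pick $\tau\in\mathcal I^\ast$ with $z\in F_\tau$ and $\diam F_\tau<\epsilon$, where $\epsilon>0$ is now fixed so small that $\epsilon<r/2$ and $\epsilon<s_0/2$; this is possible because $\diam F_\tau=c_\tau\,\diam F\to 0$ as $|\tau|\to\infty$. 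Since $Q_0\subset W$ and $F_\tau\subset F$ we get $Q_0+F_\tau\subset W+F$, and for $q\in Q_0$ and $x\in F_\tau$ the triangle inequality gives $\|q+x-y\|\le\|q-w\|+\|x-z\|\le s_0\sqrt p+\epsilon<r$ (using $z\in F_\tau$), whence $Q_0+F_\tau\subset (W+F)\cap B(y,r)$.

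The second step extracts a product from $Q_0+F_\tau$. Put $\delta=\diam\pi_V(F_\tau)$, so that $\delta\le\diam F_\tau<\epsilon<s_0/2$; let $Q\subset V$ be the cube of side $s_0-2\delta$ concentric with $Q_0$ and then translated by $\pi_V(z)$, and let $C=\pi_{V_{\mathcal O}^\perp}(F_\tau)\subset V^\perp$. Given $\tilde q\in Q$ and $c\in C$, choose $x\in F_\tau$ with $\pi_{V_{\mathcal O}^\perp}(x)=c$; since $z\in F_\tau$ one has $\|\pi_V(x)-\pi_V(z)\|\le\delta$, and therefore $\tilde q-\pi_V(x)\in Q_0$ by the choice of $Q$ ($\delta$-enlarging the concentric subcube of side $s_0-2\delta$ still lies in $Q_0$), so that $\tilde q+c=(\tilde q-\pi_V(x))+x\in Q_0+F_\tau$. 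Hence $Q\times C\subset Q_0+F_\tau\subset (W+F)\cap B(y,r)$. Finally $Q$ is a nontrivial $p$-dimensional cube contained in $V_{\mathcal O}$, while $\dim_A C=\dim_A\pi_{V_{\mathcal O}^\perp}(F_\tau)=\dim_A\pi_{V_{\mathcal O}^\perp}(F)$ by Lemma~\ref{lemma:projection}; thus $Q$ and $C$ are of the form demanded by Theorem~\ref{Theorem:product-structure}, and the lemma follows.

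The only genuine difficulty I anticipate is the geometric bookkeeping of the localisation. Because the decomposition $y=w+z$ is far from unique and $w$ may sit on the boundary, or even at a vertex, of $W$, one cannot simply shrink $W$ about $w$; the remedy is to pass \emph{simultaneously} to a small subcube $Q_0$ of $W$ near $w$ and to a deep cylinder $F_\tau$ near $z$, keeping the two scales decoupled — the side $s_0$ is fixed \emph{before} the depth of $\tau$ — so that the eroded cube $Q$ of side $s_0-2\delta$ remains nondegenerate. Once this is arranged, the product step is the same in spirit as (\ref{dos})--(\ref{tres}), and the statement about $\dim_A C$ is precisely what Lemma~\ref{lemma:projection} supplies; it is worth double-checking that replacing the cube $W$ by a general parallelotope would cost only a shape-dependent constant in the placement estimate.
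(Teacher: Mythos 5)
Your argument is correct and follows essentially the same route as the paper: you localise $W+F$ near $y$ by pairing a small sub-cube of $W$ with a deep cylinder $F_\tau$ containing $z$, absorb the $V_{\mathcal O}$-spread of the cylinder by eroding the cube, and invoke Lemma \ref{lemma:projection} for $\dim_A C$, which is exactly the paper's mechanism (there phrased via a ball $B(u,\xi)$ centred at a point of $W+x$, a translated axis-parallel cube, and a cylinder $F_\alpha$ inside it). The only point to make explicit is your normalisation of $W$ to a unit cube: this is legitimate provided the orthonormal overlapping directions (available by Theorem \ref{teovec}) are fixed once and for all before the construction in the proof of Theorem \ref{Theorem:product-structure}, or else one carries the shape-dependent constant you mention so the argument applies to the general parallelotope actually used there.
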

\begin{proof}
The  argument is similar to the used to obtain (\ref{tres}) in the proof of Theorem \ref{paththeorem}. Again, define $V=V_{\mathcal O}$. Let $x\in F$ be such that $y\in W+x$. Pick a ball $B(u,\xi)\subset B(y,r)$ with $u\in W+x$ and such that $$B_p(a,\xi):=\pi_{V}(B(u,\xi))\subset \pi_V(W+x).$$
We deduce as in (\ref{dos}) that
\begin{equation*}
B_p(a,\xi/3)\subset \pi_{V}(W+z) \ \ \textrm{for any } z\in B(x,\xi/3).
\end{equation*}
Then, pick a cube $Q_u\subset B(u,\xi/3)$ centered at $u$ with sides parallel to the coordinate axes, and let $Q_x$ be the translation of $Q_u$ centered at $x$, so $\pi_{V^{\perp}}(Q_u)=\pi_{V^{\perp}}(Q_x)$. As $x\in F$, there is $\alpha\in\mathcal I^\ast$ such that $F_\alpha\subset Q_x$. In consequence, $\pi_V(Q_u)\times\pi_{V^{\perp}}(F_\alpha)\subset  (W+F)\cap B(y,r)$, where $\pi_V(Q_u)$ is a non trivial cube contained in $V$ and $\dim_A\pi_{V^{\perp}}(F_\alpha)=\dim_A \pi_{V^{\perp}}(F)$ by Lemma \ref{lemma:projection}.
\end{proof}

\begin{proof}[Proof of Theorem \ref{Theorem:product-structure}]
We use the notation from the proof of Theorem \ref{teotangent}, where the `pretangents' $W_n(F)$ satisfy 
\begin{equation}\label{subsets}
F\subset W_n(F)\subset T_n(F)
\end{equation}
with $T_n=S_{\beta_n}^{-1}$ for some $\beta_n\in\mathcal I^\ast$.
Moreover, recall the estimate (\ref{equ1})  $$d_{\mathcal H}(W_n(F), W+F)\lesssim 1/n;$$ in particular, $C_1\le\diam W_n(F)\le C_2$ for some finite and positive constants $C_1$ and $C_2$.

Besides, by the separation hypothesis, let $\{{R}_1, \ldots, {R}_{m'}\}$ be an IFS for $F$ satisfying the SOSC with open set $U$.
We denote by $e_i$ the contraction ratio of $R_i$ and by $\mathcal{J}^\ast$ the corresponding set of finite words in the alphabet $\mathcal J=\{1, \ldots, m'\}$. Also, let $e_\ast=\min\{e_i:1\le i\le m'\}$. 
Recall that since the OSC holds, then there is a constant $L$ such that for any set $A\subset\rr^d$, the family 
\begin{equation*}
\mathcal J_A:=\{\alpha=(i_1,\ldots, i_k)\in\mathcal J^\ast: e_{\alpha}<\diam A\le e_{(i_1,\ldots,i_{k-1})}, A\cap R_\alpha(F)\neq\emptyset\}
\end{equation*} has at most $L$ elements; see \cite{BG}. 

For any $n$, define $A_n=S_{\beta_n}(W_n(F))$, so that $A_n\subset F$ by (\ref{subsets}). Note that for any $\alpha\in\mathcal J^\ast$ we have 
\begin{equation}\label{contencion}
A_n\cap R_\alpha(U)\subset R_\alpha(F),
\end{equation}
 which follows because the union in (\ref{OSC}) is disjoint and $F\subset \overline{U}$.
Below we show that there exists $\eta>0$ such that, for any $n$, we have 
\begin{equation}\label{c}
B(x_n,e_\alpha\eta)\subset  R_\alpha(U)
\end{equation} 
for some $x_n\in A_n$ and some $\alpha\in\mathcal J_{A_n}$. This allows to show later that the sequence of compact sets $\{R_\alpha^{-1}(A_n\cap B(x_n,e_\alpha\eta))\}_n$, which have diameters bounded away from zero and are contained in $F$, has a limit point that contains a Cartesian product satisfying the conclusion of the statement of the theorem.   

Firstly, the open set $U$ is chosen so that 
\begin{equation}\label{openful}
\nu(U)=\nu(F),
\end{equation} 
where $\nu$ denotes the restriction of the $s$-dimensional Hausdorff measure to $F$ and $s$ is the Hausdorff dimension of $F$; this is possible because of \cite[Theorem 2.2]{S94} and since the corresponding self-similar measure with natural weights is a multiple of $\nu$ by \cite[Section 5.3]{Hut}. Note in particular that $0<\nu(F)$. Now, given that the sets 
\begin{equation*}
\Omega_k:=\{x\in U: (k-1)^{-1}<d(x, U^c)\}
\end{equation*} 
increase to $U$, we fix $k_0$ such that 
\begin{equation*}
\nu(U\setminus\Omega_{k_0})\le\nu(F)/(2C_2L).
\end{equation*}
Also, for any $\alpha \in\mathcal J_{A_n}$ we have  
\begin{equation*}
C_1e_\ast c_{\beta_n}\le e_\alpha< C_2c_{\beta_n}.
\end{equation*} 
Then, by (\ref{subsets}),  the scaling property of the Hausdorff measure and since $\nu(\partial U)=0$ by (\ref{openful}) (where $\partial U$ is the topological boundary of $U$), we get
\begin{align*}
c_{\beta_n}\nu(F)=\nu(S_{\beta_n}(F))\le\nu(A_n)&=\sum_{\alpha\in\mathcal J_{A_n}}\nu(A_n\cap R_\alpha(F)) \\
&\le	\left(\sum_{\alpha\in\mathcal J_{A_n}}\nu(A_n\cap R_\alpha(\Omega_{k_0}))\right)+c_{\beta_n}\nu(F)/2.
\end{align*}
This implies
that there is $\alpha_n\in\mathcal J_{A_n}$ such that 
$A_n\cap R_{\alpha_n}(\Omega_{k_0})\neq\emptyset$,
and picking $x_n$ in  this intersection, we get $B(x_n,e_{\alpha_n}\eta)\subset R_{\alpha_n}(U)$ with $\eta=k_0^{-1}$. This proves (\ref{c}).

After choosing a subsequence and relabeling, we assume that $c_{\beta_n}^{-1}e_{\alpha_n}\to \tilde c$, for some positive and finite constant $\tilde c$, and that  $T_n(x_n)\to z$ for some $z\in W+F$. Also, define 
\begin{equation*}
B_n:=T_n(A_n\cap B(x_n,e_{\alpha_n}\eta))=W_n(F)\cap B(T_n(x_n),c_{\beta_n}^{-1}e_{\alpha_n}\eta)
\end{equation*} 
and let $P\subset W+F$ be any limit set in the Hausdorff metric of the sequence $\{B_n\}$. Given $\epsilon>0$, it follows that $(W+F)\cap B(z,\tilde c\eta/2)\subset [B_n]_\epsilon$ for all $n$ large enough, and consequently, $(W+F)\cap B(z,\tilde c\eta/2)\subset P$. Then, by Lemma \ref{lemma:prod}, there are sets $Q'\subset V_{\mathcal O}$ and $C'$ as in the statement of Theorem \ref{Theorem:product-structure} with $Q'\times C'\subset P$. 

Given that
\begin{align*}
d_{\mathcal H}\bigl(R_{\alpha_n}^{-1}\circ S_{\beta_n}(B_n),R_{\alpha_n}^{-1}\circ S_{\beta_n}(P)\bigr)=e_{\alpha_n}^{-1}c_{\beta_n}d_{\mathcal H}(B_n, P)\to 0
\end{align*}
and that $R_{\alpha_n}^{-1}\circ S_{\beta_n}(B_n)\subset F$ for all $n$ by (\ref{contencion}), we obtain that any limit point of $\{R_{\alpha_n}^{-1}\circ S_{\beta_n}(P)\}_n$, in the Hausdorff metric, is contained in $F$.

Let $O(d)$ be the group of $d\times d$ orthogonal matrices, and let $O_n\in O(d)$ and $b_n\in \rr^d$ be the orthogonal and translation parts of the similarity $R_{\alpha_n}^{-1}\circ S_{\beta_n}$. After picking a subsequence and relabeling, we assume by compactness of $O(d)$ that $O_n\to O$ for some $O\in O(d)$, and also that $b_n\to b$ for some $b\in\rr^d$, which is possible since $R_{\alpha_n}^{-1}\circ S_{\beta_n}(P) \Subset [F]_1$ for all $n$ sufficiently large. 
Defining the similarity $T(x)=\tilde{c} Ox +b$, then $R_{\alpha_n}^{-1}\circ S_{\beta_n}$ converges uniformly to $T$ on bounded sets, and in particular on $P$. Therefore, $T(P)$ is a limit point of $\{R_{\alpha_n}^{-1}\circ S_{\beta_n}(P)\}_n$ in the Hausdorff metric, and in consequence, $T(P)\subset F$. 

Applying  Corollary \ref{Coro:invariance}, note that $V_{\mathcal O}$ is invariant by $O_n$ for all $n$, and hence by $O$. Then, $V_{\mathcal O}^\perp$ is invariant by $O$. These observations, together with the fact that $T$ is a similarity, finally show that the set $T(P)$ contains a similar copy $Q\times C$ of $Q'\times C'$, with  $Q\subset V_{\mathcal O}$ a non trivial cube, $C\subset V_{\mathcal O}^\perp$, and $\dim_A C=\dim_A \pi_{V_{\mathcal O}^\perp }F$. This concludes the proof.
\end{proof}

\section*{Acknowledgments} I am grateful to Kathryn Hare for valuable discussions at an early stage of this project. I also thank the referee for a number of helpful comments.

\end{document}